\newcommand{\wD}{\widetilde{\Delta}}
\newcommand{\bC}{\mathbb{C}}
\newcommand{\bR}{\mathbb{R}}
\newcommand{\bZ}{\mathbb{Z}}
\newcommand{\bN}{\mathbb{N}}
\newcommand{\ttor}{(\bC^\ast)^2}
\newcommand{\eps}{\varepsilon}
\newcommand{\s}[1]{\scalebox{0.7}{$#1$}}
\newcommand{\tgf}{\widetilde{\gamma}_{\s{f}}}
\newcommand{\tgg}{\widetilde{\gamma}_{\s{g}}}
\newcommand{\gf}{\gamma_{\s{f}}}
\renewcommand{\gg}{\gamma_{\s{g}}}
\renewcommand{\Im}{\text{Im}}
\renewcommand{\Re}{\text{Re}}
\newcommand{\tw}{\tilde w}
\newcommand{\tft}{\tilde{f}_t}
\newcommand{\wN}{\widetilde{N}}
\renewcommand{\ss}[1]{\scalebox{0.7}{$#1$}}
\renewcommand{\emph}[1]{{\it {\color{NavyBlue} #1}}}
\DeclareMathOperator{\Log}{Log}
\DeclareMathOperator{\itr}{int}
\DeclareMathOperator{\conv}{conv}
\DeclareMathOperator{\vol}{vol}
\DeclareMathOperator{\Sym}{Sym}
\DeclareMathOperator{\Cr}{cr}
\DeclareMathOperator{\New}{New}
\DeclareMathOperator{\ini}{in}
\newtheorem{Proposition}{Proposition}[section]
\newtheorem{thm}[Proposition]{Theorem}
\newtheorem{definition}[Proposition]{Definition}
\newtheorem{Lemma}[Proposition]{Lemma}
\newtheorem{Remark}[Proposition]{Remark}
\newtheorem{theorem}{Theorem}
\newtheorem{Corollary}[Proposition]{Corollary}
\newtheorem{Example}{Example}
\title{Patchworking the Log-critical locus of planar curves}
\author{Lionel Lang and Arthur Renaudineau}
\address[Lang]{Faculty of Engineering and Sustainable Developement \\ University of Gävle \\ 80176 \\ Gävle \\ Sweden}
\email{lionel.lang@hig.se}
\address[Renaudineau]{Univ. Lille \\ CNRS \\ UMR 8524 - Laboratoire Paul Painlev\'{e}\\  F-59000 Lille\\ France.}
\email{arthur.renaudineau@univ-lille.fr}
\thanks{The authors are grateful to Jens Forsg{\aa}rd and especially to Timur Sadykov and Dmitrii Bogdanov for providing the pictures based on the work \cite{BKS} that we used for Figure \ref{fig:intro4}. A.R. acknowledges support from the Labex CEMPI (ANR-11-LABX-0007-01).}
\keywords{Amoeba, contour, Log-critical locus, patchworking.}
\subjclass[2010]{14H50, 14M24, 14T05}
\begin{document}

\maketitle

\begin{abstract}
    We establish a patchworking theorem \textit{\`{a} la} Viro for the Log-critical locus of algebraic curves in $\ttor$. As an application, we prove the existence of projective curves of arbitrary degree with smooth connected Log-critical locus. To prove our patchworking theorem, we study the behaviour of Log-inflection points along families of curves defined by Viro polynomials. In particular, we prove a generalisation of a theorem of Mikhalkin and the second author on the tropical limit of Log-inflection points.
\end{abstract}

\section{Introduction}\label{sec:intro}

A fundamental problem in real algebraic geometry is the topological classification of projective non-singular varieties of a fixed degree. For plane curves $\bR C \subset \bR P^2$, the topological classification is known since the work of Harnack (\cite{Har}), but the ambient classification, that is the classification of topological pairs $(\bR C, \bR P^2)$, is still open from degree $8$. \smallskip

A different but related classification problem is the one introduced in \cite{L19}: for a given degree $d$, classify the topological pairs $(C, \Cr(C))$ where $C \subset \bC P^2$ is a curve of degree $d$ and $\Cr(C)\subset C$ is the \emph{Log-critical locus}. Recall that the Log-critical locus is the closure in $C$ of the critical locus of the map 
\[
\begin{array}{rcl}
     \Log : C \cap \ttor & \rightarrow & \bR^2  \\
     (z,w)& \mapsto & \big( \log\vert z \vert , \log \vert w \vert \big) 
\end{array} .
\]
Alternatively, the set $\Cr(C)\subset C$ is the critical locus of the restriction to $C$ of the \emph{moment map} $\mu:X\rightarrow \Delta_d$, where $\Delta_d$ is the standard $d$-simplex.
Important related objects are the \emph{amoeba} $\mu(C)$ and its \emph{contour} $\mu(\Cr(C))$.\smallskip

In analogy to the real locus of a real algebraic curve, the set $\Cr(C)$ is a smooth manifold of dimension 1, that is a disjoint union of ovals in the Riemann surface $C$, provided that $C$ is generic. More precisely, the set of curves $C$ for which $\Cr(C)$ is singular is a semi-algebraic subset of real codimension $1$ in the set of all complex curves of degree $d$, see \cite{L19}. The complement to this discriminantal set turns out to be disconnected for $d\geqslant 2$, allowing distinct topological pairs $(C, \Cr(C))$ for a given degree. \smallskip

There exist strong connections between the ambient classification problem in real algebraic geometry and the classification of Log-critical loci. 
Indeed, it was observed by Mikhalkin in \cite{Mikh} that for a real curve $C$, the real part $\bR C \subset C$ is a subset of $\Cr(C)$. The inclusion $\bR C \subset \Cr(C)$ taking place in $C$ gives valuable information on the inclusion $\bR C\subset \bR P^2$. A striking example is given by the simple Harnack curves introduced in \cite{Mikh}: if the inclusion $\bR C \subset \Cr(C)$ is an equality, then the topological pair $(\bR C, \bR P^2)$ is uniquely determined by the degree. We also refer to \cite{L2} for further results in this direction. \smallskip

Up to now, not much is known on the classification of pairs $(C, \Cr(C))$ for a given degree $d$. As observed in \cite{Mikh}, the set $\Cr(C)$ is the pullback  of $\bR P^1$ under the \emph{logarithmic Gauss map} $\gamma : C \rightarrow \bC P^1$. This implies that the number of connected components of $\Cr(C)$ satisfies
\[1 \leqslant b_0( \Cr(C)) \leqslant d^2\]
where $d^2$ is the degree of $\gamma$. There are very few general constructions of curves $C$ with prescribed values of $b_0( \Cr(C))$. The first example comes from the simple Harnack curves of \cite{Mikh}: for such curves, the set $\Cr(C)$ has $\left(\begin{smallmatrix} d-1\\2\end{smallmatrix}\right)+1$ connected components. 
In \cite{L19}, it was shown that $b_0( \Cr(C))$ can achieve all values between $\left(\begin{smallmatrix} d+1\\2\end{smallmatrix}\right)$ and $d^2$. Besides the latter results, there is no general construction of pairs $(C,\Cr(C))$. In particular, there was no evidence up to now of the existence of a curve $C$ with smooth Log-critical locus satisfying $b_0( \Cr(C)) \leqslant \left(\begin{smallmatrix} d-1\\2\end{smallmatrix}\right)$.\smallskip

Coming back to the classification of real algebraic curves, the most powerful tool to construct pairs $(\bR C, \bR P^2)$ is the Patchworking Theorem introduced by O. Viro, see \cite{Viro84}. In its full generality,  this theorem allows one to construct real algebraic hypersurfaces of a given degree in toric varieties  by gluing a collection of hypersurfaces of smaller degrees using so-called \emph{Viro polynomials}, see for instance the appendix of \cite{GLSV} or \cite{Risler92}.\medskip

The main goal of the present paper is to prove an analogue of Viro's Patchworking Theorem for Log-critical loci and study its applications to the associated classification problem. We obtain a patchworking theorem in the name of Theorem \ref{thm:patchwork} that we use to construct curves whose Log-critical locus has a small number of connected components, see Theorem \ref{thm:applicationpatchwork}. In particular, we show that for any degree $d\geqslant 2$, there exists a smooth curve $C$ such that $\Cr(C)$ is smooth and connected. In particular, we disprove Conjecture 1 in \cite{L19}.\smallskip

The statement of Theorem \ref{thm:patchwork} requires some technicalities and is therefore postponed to Section \ref{sec:main}. In the meantime, let us illustrate the latter results with some examples. To begin with, let us consider the patchworking of the real algebraic curves $C_1$ and $C_2$ defined respectively by the polynomials
\[f_1(z,w)=w(z+2.6)+(z+2.5)(z+1) \quad \text{and} \quad f_2(z,w)=w(z+2.6)+w^2.\]
Here, we denote $\Delta_j$ the Newton polygon of $f_j$ and $\mu_j$ the corresponding moment map, $j=1,2$. The real part $\bR C_j$ can be represented in 4 symmetric copies of $\Delta_j$, one copy per quadrant of $(\bR^*)^2$. This representation, usually referred to as a \emph{chart} of the curve, can be achieved by using an unfolding of $\mu_j$ that remember the signs of each quadrant, see Figure \ref{fig:intro1}.
Since the polynomials $f_1$ and $f_2$ agree with each other on the common edge $\Delta_1\cap\Delta_2$, the Patchworking Theorem states that there exists a curve $C$ with Newton polygon $\Delta_1\cup\Delta_2$ and whose chart is isotopic to the gluing of the charts of $C_1$ and $C_2$, see again Figure \ref{fig:intro1}. Moreover, the curve $C$ can be defined as the zero set of the Viro polynomial
\[f_t(z,w)=w(z+2.6)+(z+2.5)(z+1)+tw^2\]
for $t>0$ small enough. \smallskip
\begin{figure}[h]
    \centering
    \scalebox{1}{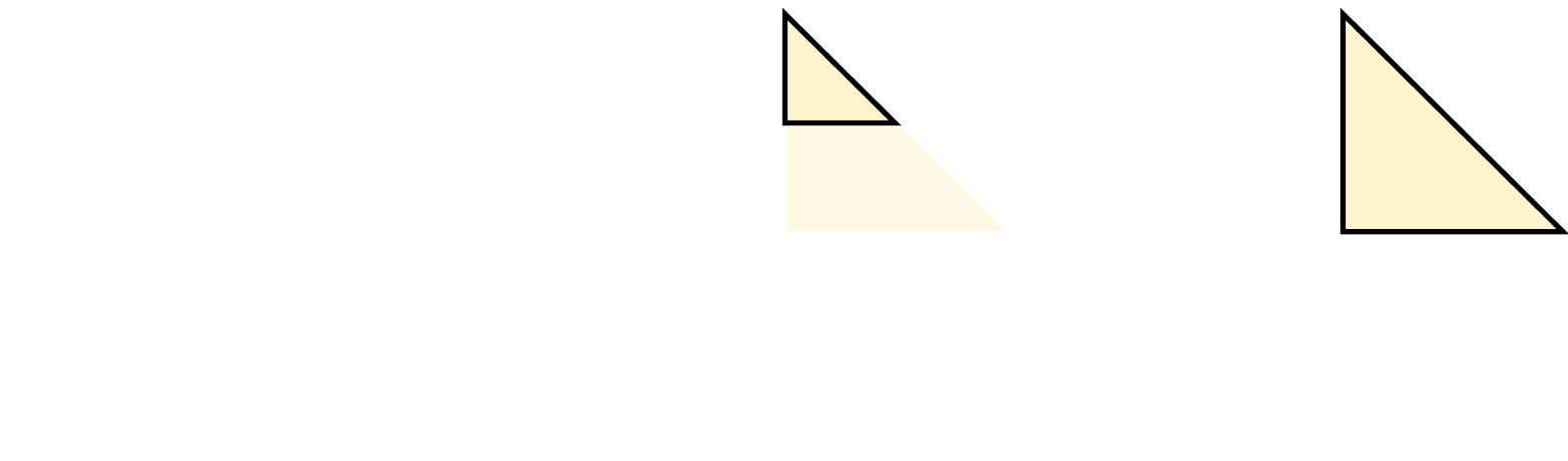}
    \caption{The charts of $\bR C_1$ (left), $\bR C_2$ (middle) and $\bR C$ (right).}
    \label{fig:intro1} 
\end{figure}

There is no direct generalisation of the charts of Figure \ref{fig:intro1} if we perturb the polynomials $f_1$ and $f_2$ in a complex direction since the real parts of $C_1$ and $C_2$ cease to exist. To fix this, we can first fold the 4 copies of each Newton polygon to a single one, 
considering the set $\mu_j(\bR C_j)$ instead of the chart of $C_j$, see Figure \ref{fig:intro2}. Secondly, we can trade the set $\mu_j(\bR C_j)$ for its superset $\mu_j(\Cr(C_j))$, that is the contour of $C_j$. We can use such kind of representations for an arbitrary patchwork. This provides an indirect picture on how the Log-critical loci of the various curves involved in the patchwork eventually glue together along the families of curves defined by the underlying Viro polynomial.
\begin{figure}[b]
    \centering
    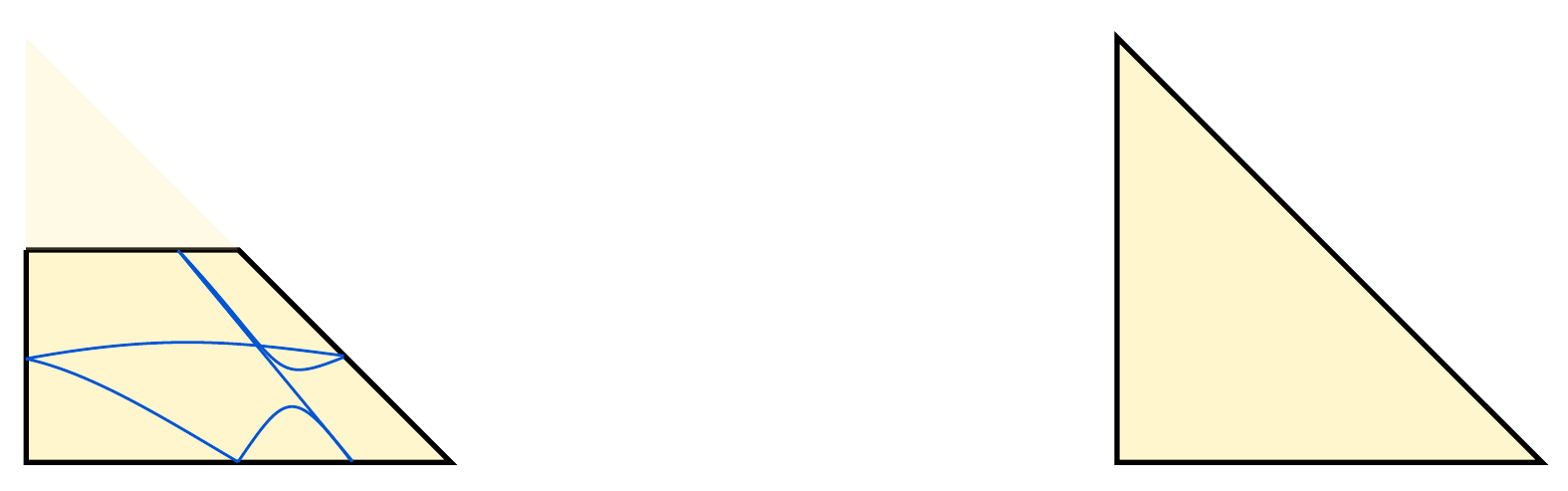
    \caption{The amoeba of $\bR C$ (right) obtained as the patchwork of the amoebas of $\bR C_1$ (left) and $\bR C_2$ (middle).}
    \label{fig:intro2}
\end{figure}

Instead of the polynomials $f_1$, $f_2$ and $f_t$ used above, consider now
\[\tilde f_1(z,w)=w(z+2.6+0.5 i)+(z+2.5+0.5 i)(z+1+0.5 i) \, , \; \; \tilde f_2(z,w)=w(z+2.6+0.5 i)+w^2\]
and the Viro polynomial
\[\tilde f_t(z,w)=w(z+2.6+0.5 i)+(z+2.5+0.5 i)(z+1+0.5 i)+tw^2\]
and add a tilde to every piece of notation. Then, 
Theorem \ref{thm:patchwork} describe the set $\Cr(\widetilde C)$ for a generic choice of the complex parameter $t$, provided that $\vert t \vert$ is small.  In Figure \ref{fig:intro3}, we picture the patchworking procedure at the level of contour as well as the amoeba $\Log(\widetilde C)$ for the parameter $t=0.003$. 
In the present case, Theorem \ref{thm:patchwork} implies that $b_0\big(\Cr(\widetilde C)\big)=2$, a fact that can be read from Figure \ref{fig:intro3}. To our knowledge, this is the first instance of a curve whose Newton polygon is the 2-simplex and such that its critical locus is smooth and has exactly 2 connected components. An additional patchwork using the latter curve leads to a cubic curve $\widehat C$ whose Log-critical locus is smooth and connected. Concretely, if $\tilde f$ is the defining polynomial of $\widetilde C$, then the curve $\widehat C$ can be defined by the polynomial $\hat f=z \tilde f -8\cdot 10^{-7} (w+1)(w+10)(w+100)$, see Firgure \ref{fig:intro5}.
%Here, it is worth noticing that, although the pictures in the center and to the right represent the same object, they reveal different properties. In general, it is challenging to obtain readable picture, partly because the Patchworking Theorem may only hold for very small values of $t$.
\smallskip
\begin{figure}[h]
    \centering
    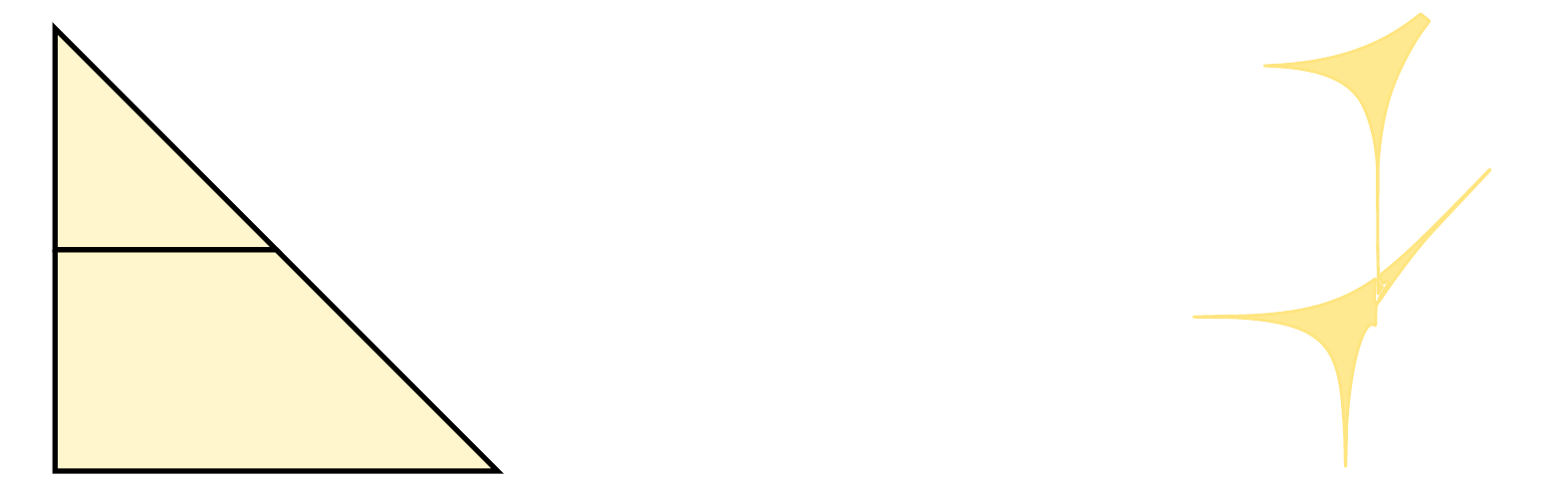
    \caption{The contour of $\widetilde C$ (middle) obtained as the patchwork of the contours of $\widetilde C_1$ and $\widetilde C_2$ (left). On the right, we represent the non-compact amoeba $\Log(\widetilde C)$ together with its contour.}
    \label{fig:intro3}
\end{figure}

The efficiency of Theorem \ref{thm:patchwork} for constructing curves with prescribed Log-critical locus relies on the variety of building blocks that are already at our disposal. In Section \ref{sec:auxilliarycurves}, we provide a collection of such building blocks that we use later on to prove Theorem \ref{thm:applicationpatchwork}.  Those blocks are curves in the Hirzebruch surface $\Sigma_1$. If we denote $(z,w)$ the coordinates of $\ttor\subset \Sigma_1$, these curves intersect the divisor $z=0$ exactly once. More importantly, they have the remarkable property that each intersection point with the divisor $w=\infty$ is contained in a single component of the Log-critical locus. The latter phenomenon can be observed in Figure \ref{fig:intro4} where the contour of such curves is depicted. There, we can observe the ``birth'' of each of the tentacles going upwards, involving exactly one component of the Log-critical locus.

\begin{figure}[t]
    \centering
    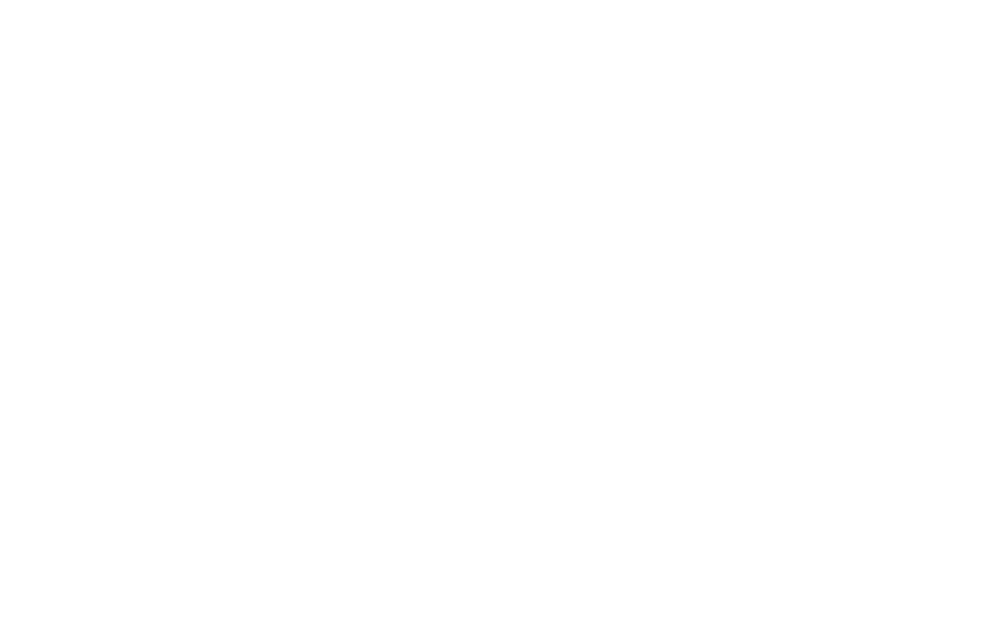
    \caption{The amoeba (yellow) and the contour (blue) of the curve $\widehat C$.}
    \label{fig:intro5}
\end{figure}

To obtain Theorem \ref{thm:patchwork}, we study the behaviour of \emph{Log-inflection points} along families of curves defined by Viro polynomials. Recall that a Log-inflection point is a ramification point for the the Logarithmic Gauss map $\gamma$. The relative position with $\bR P^1$ of the corresponding branching points in $\bC P^1$ governs the topology of the pair $(C,\Cr(C))$ since $\Cr(C)=\gamma^{-1}(\bR P^1)$. 
While patchworking a collection of curves $C_1,\; \cdots ,\; C_k$, the 1-parameter family of curves defined by the underlying Viro polynomial can be compactified with the reducible curve $C_1\cup \cdots \cup C_k$ at $t=0$, using an appropriate toric $3$-fold. When $t$ tends to $0$, some of the Log-inflection points of the generic curve concentrate at the nodal points of the central curve $C_1\cup \cdots \cup C_k$. We describe the asymptotic of these points in Theorem \ref{thm:asymptotic}, see Section \ref{sec:proof}. The latter result is the cornerstone of the proof of Theorem \ref{thm:patchwork}. It also leads to a generalisation of a theorem of Mikhalkin and the second author (see \cite{MikhRena}) in the name of Theorem \ref{thm:tropicallimit}. The latter theorem describes the tropical limit of the Log-inflection points along families of curves defined by generic Viro polynomials. The original statement \cite[Theorem 3]{MikhRena} asserts that Log-inflection points accumulate by pairs at the midpoint of every bounded edge of a tropical curve, if the tropical curve is non-singular. We prove that the same phenomenon occurs for families of curves defined by generic Viro polynomials associated to arbitrary subdivisions. In particular, the limiting tropical curve may be singular in this context. Eventually, we show in the appendix that the genericity assumption on the Viro polynomial is necessary. To do so, we exhibit Viro polynomials for which the tropical limit of some Log-inflection points is located at 1/3 of a bounded edge of the tropical limit. \newpage

\begin{figure}[t]
    \centering
    \includegraphics[scale=1.2]{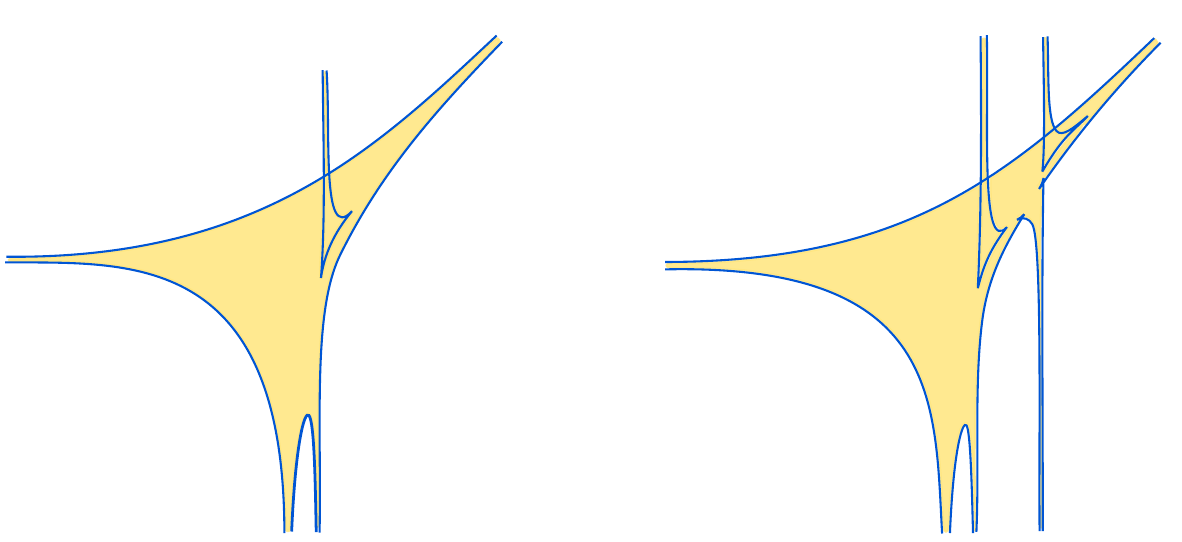}
    \caption{The amoeba (yellow) and the contour (blue) of the polynomials $w(z+17-5i)+(z+15-5i)(z+10-5i)
$ (left) and $w (z+37-5i)(z+17-5i)+(z+35-5i)(z+15-5i)(z+10-5i)$ (right).}
    \label{fig:intro4}
\end{figure}

\tableofcontents

\section{Setting} \label{sec:setting}
\subsection{Viro polynomial}\label{sec:viropol}

Throughout this text, the symbol $\Delta$ refers to a \emph{lattice polygon}, that is the convex hull in $\bR^2$ of a finite set of points in $\bZ^2$. We denote by $\emph{\New(f)}$ the Newton polyhedron of any Laurent polynomial $f$. We denote by \emph{$(X_\Delta, \mathcal{L}_\Delta)$} the polarized \emph{toric surface} associated to $\Delta$, see \cite[Chapter 5]{GKZ}. The monomial embedding (1.2) of the latter reference allows to identify the space of section $\vert \mathcal{L}_\Delta \vert$ with the projectivisation of the space of Laurent polynomials whose Newton polygon is contained in $\Delta$. The toric surface $X_\Delta$ provides a compactification of $\ttor$ with a chain of toric divisors. Each such divisor is isomorphic to $\bC P^1$ and correspond to an edge of $\Delta$ via the moment map.

\begin{definition}\label{def:div}
A \emph{subdivision} of a lattice polygon $\Delta$ is a set of lattice polygons $\{\Delta_k\}_{k\in I}$ such that:

\begin{itemize}
\item $\cup_{k\in I}\Delta_k=\Delta$,
\item if $k,l\in I$, then the intersection $\Delta_k\cap\Delta_l$ is a common face to $\Delta_k$ and $\Delta_l$.
\end{itemize}
A subdivision $\{\Delta_k\}_{k\in I}$ of $\Delta$ is said to be \emph{convex} if there exists a convex piecewise-linear function $\nu:\Delta\rightarrow\bR$ whose domains of linearity coincide with the polygons $\Delta_k$. If such a function exists, then it can be taken in such a way that $\nu(\Delta\cap\bZ^2)\subset \bZ^2$. We call such a function \emph{$\bZ$-convex}. 
\end{definition}

For any $\bZ$-convex function $\nu:\Delta\rightarrow\bR$ and any Laurent polynomial
\[
f(z,w)=\sum_{(i,j)\in\Delta\cap\bZ^2}a_{i,j}z^iw^j,
\]
we define the associated \emph{Viro polynomial} to be the Laurent polynomial
\[
f_t(z,w):= f_{t,\nu}(z,w):=\sum_{(i,j)\in\Delta\cap\bZ^2}a_{i,j}t^{\nu(i,j)}z^iw^j.
\]
For any $k\in I$, we denote by \emph{$f^{\Delta_k}$}$:=\sum_{(i,j)\in\Delta_k\cap\bZ^2}a_{i,j}z^iw^j$ the truncation of $f$ to $\Delta_k$.
The Viro polynomial $f_t$ is said to be \emph{non-degenerate} if for any $k\in I$, the curve $Z(f^{\Delta_k})$ is smooth and transverse to every toric divisor of $X_{\Delta_k}$.

Recall that the function $\nu$ induces a toric compactification of $(\bC^*)^3$ in which all the curves $Z(f_t):=\{f_t=0\}\subset \ttor$ coexist. Indeed, consider the lattice polyhedron 
\[
\Delta_\nu := \left\lbrace (j_1,j_2,j_3)\in\bR^3 \big| (j_1,j_2)\in \Delta, \, \nu(j_1,j_2)\leqslant j_3 \leqslant \max \nu \right\rbrace
\]
and the corresponding toric $3$-fold $X_{\Delta_\nu}\supset (\bC^*)^3$ with coordinates $(z,w,t)$. The closure of each horizontal section $\{t=constant\}$ in $X_{\Delta_\nu}$ is isomorphic to $X_\Delta$ except for the section $\{t=0\}$ which is a reducible toric surface $\cup_{k\in I} X_{\Delta_k}$, where $\{\Delta_k\}_{k\in I}$ is the subdivision of $\Delta$ induced by $\nu$. 

For a Viro polynomial $f_t$ associated to $\nu$ and a fixed constant $s\in \bC^*$, the curve $\emph{C_s}:=\overline{Z(f_s)}\subset X_\Delta$ can be seen as the compactification of the intersection of the surface $\{f_t=0\}\subset (\bC^*)^3$ with $\{t=s\}$ in $X_{\Delta_\nu}$. The family $C_s\subset X_{\Delta_\nu}$ admits a limit $\emph{C_0}$: this is the reducible curve in $\cup_{k\in I}X_{\Delta_k}$ whose intersection with $X_{\Delta_k}$ is defined by the truncation $f_t^{\Delta_k}$.

\subsection{Logarithmic Gauss map}\label{sec:loggauss}

Let $f$ be a Laurent polynomial with Newton polygon $\Delta$ and denote by $C\subset X_\Delta$ the compactification of $Z(f)\subset \ttor$. Provided that $C$ is smooth, we can define the \emph{logarithmic Gauss map} by
\[
\begin{array}{rcl}
     \gf \quad  : \quad  C &\rightarrow& \bC P^1  \\
      (z,w) & \mapsto & \left[z\cdot\partial_zf(z,w): w\cdot\partial_wf(z,w)\;  \right]
\end{array}
\]
where $(z,w)$ are the coordinates on $\ttor$. Locally, the map $\gamma$ is the composition of any branch of the coordinate-wise logarithm with the usual Gauss map that associates the normal direction to an hypersurface at a smooth point. If a local parametrisation $s\mapsto(z(s),w(s))$ of $C$ is given, the composition of $\gamma$ with the latter is 
\begin{equation}\label{eq:gammaparam}
    s\mapsto \left[ -\frac{d}{ds} \log(w(s)) : \frac{d}{ds} \log(z(s))\right].
\end{equation}

The degree of  $\gf$ is $2\vol(\Delta)$ where $\vol$ is the Euclidean area, provided that $C$ is transverse to every toric divisor. Following \cite{MikhRena}, we refer to the ramification points of $\gf$ as the \emph{Log-inflection points} of $C$.

Let now $f_t$ be a Viro polynomial associated to a $\bZ$-convex function $\nu : \Delta \rightarrow \bR$ and $\{C_t\}_{t\in \bC}$ the corresponding family of curves defined in Section \ref{sec:viropol}. For $ \eps>0$ small enough and any non zero $t$ such that $\vert t \vert <\eps$, the curve $C_t$ is smooth and the corresponding logarithmic Gauss map $\emph{\gamma_t}:=\gamma_{\s{f_t}}$ is well define. The family of maps $\{\gamma_t\}_{\vert t \vert <\eps, t\neq 0}$ extends to a map $\gamma_0$ on $C_0$. To see this, observe that if $\nu=0$ on one of the polygons $\Delta_k$ of the subdivision induced by $\nu$, then the family of maps 
\[
\gamma_t (z,w) \;  = \;  \left[z\cdot\partial_zf_t(z,w): w\cdot\partial_wf_t(z,w)\;  \right]
\]
converges to 
\[
\gamma_0 (z,w) \;  = \;  \left[z\cdot\partial_zf_0(z,w): w\cdot\partial_wf_0(z,w)\;  \right]
\]
which is the logarithmic Gauss map of the curve $\overline{Z(f_0)}\subset X_{\Delta_k}$. By applying a toric change of coordinates of the form $(z,w,t)\mapsto(z,w,tz^aw^b)$, we can make any of the faces $\Delta_k\subset \Delta_\nu$ horizontal. It amounts to replace $\nu$ with $\nu-\ell$ where $\ell$ is the linear function that coincide with $\nu$ on $\Delta_k$. Therefore, we can apply the above reasoning to any element $\Delta_k$ of the subdivision of $\Delta$. This proves the claim. A different viewpoint on the above computation is that the coordinates $(z,w,t)$ induce well defined coordinates $(z,w)$ on the torus $\ttor$ of  each divisors $X_{\Delta_k}\subset X_{\Delta_\nu}$. These coordinates allows us to define compatible logarithmic Gauss maps on each of the irreducible components of $C_0$.

If the Viro polynomial $f_t$ is non-degenerate in the sense of Section \ref{sec:viropol}, then the map $\gamma_0$ has maximal degree $2\vol(\Delta)$ and so does $\gamma_t$ for $\vert t\vert$ small.

\subsection{Log-critical locus}\label{sec:logcrit} Recall that for any lattice polygon $\Delta$, the \emph{moment map} $\mu:X_\Delta \rightarrow \Delta$ is the quotient map of the action of $(S^1)^2$ on $X_\Delta$. After applying a diffeormorphism on $\itr(\Delta)$, the restriction of $\mu$ to the torus $\ttor \subset X_\Delta$ is given by  
\[
\begin{array}{rcl}
     \Log : \ttor &\rightarrow & \bR^2  \\
     (z,w)& \mapsto & (\log\vert z\vert , \log\vert w \vert) 
\end{array}.
\]
For a smooth algebraic curve $C\subset X_\Delta$ given by a Laurent polynomial $f$, the \emph{Log-critical locus} $\emph{\Cr(C)}\subset C$ (denoted alternatively $\emph{\Cr(f)}$) refers to the critical locus of the restriction $\mu: C \rightarrow \Delta$. It was observed in \cite{Mikh} that 
\[
    \Cr(f) = \gf^{-1}(\bR P^1).
\]
It was shown in \cite{L19} that $\Cr(f)$ is smooth for a generic polynomial $f$ within the linear system $\vert \mathcal{L}_\Delta \vert$. We fix once and for all an orientation on $\bR P^1$ so that $\Cr(f)$ inherits an orientation from $\bR P^1$ whenever it is smooth.

Let $f_t$ be a Viro polynomial and let $C_t$, $t\in \bC$, be defined as in the previous section. We define
\[
\emph{\Cr(C_0)}:= \gamma_0^{-1}(\bR P^1).
\]
Equivalently, the locus $\Cr(C_0)$ is the Hausdorff limit of the family of Log-critical loci $\Cr(C_t)$. 

Observe that for any curve $C\subset X_\Delta$, the set $\Cr(C)$ always contains the intersection points of $C$ with the toric divisors of $X_\Delta$, see for instance \cite[Lemma 1.10]{L2}. As a consequence, the set of nodes of $C_0$ is a always a subset of $\Cr(C_0)$.

We say that the Viro polynomial $f_t$ is \emph{Log non-degenerate} if it is non-degenerate  and if the intersection of $\Cr(C_0)$ with any irreducible component of $C_0$ is smooth. In particular, every such piece of $\Cr(C_0)$ inherits an orientation from $\bR P^1$. As observed in Section \ref{sec:loggauss}, the maps $\gamma_t$ have degree $2\vol(\Delta)$ for $\vert t\vert$ small, included for $t=0$. Therefore, the number of connected components of $\Cr(C_t)$ is constant in a 
neighbourhood of $0\in \bC$. 

In a small neighbourhood of any node $p\in C_0$, the topological pair $(C_0, \Cr(C_0))$ is as pictured in Figure \ref{fig:defC_0} (center). For small $t$, the deformation $(C_t, \Cr(C_t))$ of $(C_0, \Cr(C_0))$ is said to be \emph{smooth} if both $C_t$ and $\Cr(C_t))$ are smooth manifolds. At the topological level, there are several possible smooth deformations $(C_t, \Cr(C_t))$ of $(C_0, \Cr(C_0))$ that are compatible with the orientation of $\Cr(C_0)$ and that preserve the number of 
connected components $b_0\big(\Cr(C_0)\big)$. However, there is only one such deformation that connects the Log-critical loci of the two branches of $C_0$ meeting at the node $p$. This deformation is pictured on the left-hand side of Figure \ref{fig:defC_0} and will be referred to as the \emph{connected deformation}. Our interest in this specific deformation will be motivated in Theorem \ref{thm:patchwork}. Similarly, there are several possible deformations that are not smooth. There will be only one such deformation that will be of interest to us. This deformation is pictured on the right-hand side of Figure \ref{fig:defC_0} and will be referred to as the \emph{singular deformation}.

\begin{figure}[h]
    \centering
    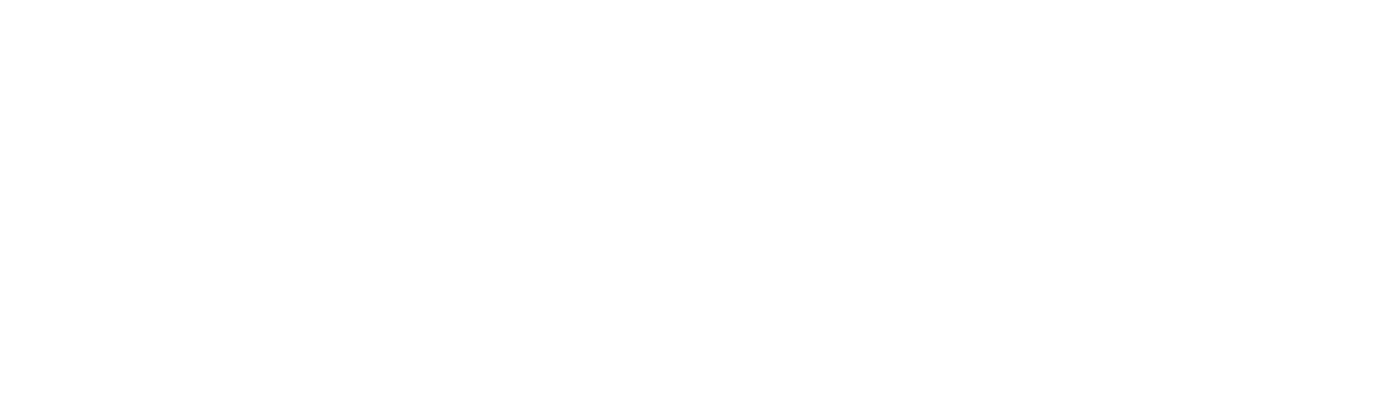
    \caption{The connected deformation (left) and the singular deformation (right) of the pair $(C_0,\Cr(C_0))$ (center)}
    \label{fig:defC_0}
\end{figure}

\subsection{Tropical limit of Viro polynomials}\label{sec:tropgeo}
Let $\nu$ be a $\bZ$-convex function on the lattice polygon $\Delta$ and let $\{\Delta_k\}_{k\in I}$ be the associated subdivision. Let
\[
f(z,w)=\sum_{(i,j)\in\Delta\cap\bZ^2}a_{i,j}z^iw^j,
\]
be a Laurent polynomial and let $f_t$ be the Viro polynomial associated to $f$ and $\nu$, see Section \ref{sec:viropol}. Then the image of $Z(f_t)$ under the map 
\[
\begin{array}{rcl}
     \Log_t : \ttor &\rightarrow & \bR^2  \\
     (z,w)& \mapsto & (\frac{-\log\vert z\vert}{\log(t)} , \frac{-\log\vert w \vert}{\log(t)}) 
\end{array}
\]
admits a limit $\Gamma$ in the Hausdorff sense when $t$ goes to $0$. This limit is usually referred to as the \emph{tropical limit} of $\left\lbrace Z(f_t) \right\rbrace_{t\in\bC}$. The set $\Gamma\subset \bR^2$ is a \emph{tropical curve}, a rectilinear graph which is dual to the subdivision $\{\Delta_k\}_{k\in I}$ of $\Delta$, see for instance \cite{BIMS}. Moreover, any edge $e$ of $\Gamma$ is equipped with a positive integer weight given by the integer length of the edge dual to $e$ in the subdivision $\{\Delta_k\}_{k\in I}$. 

If $p\in\Gamma$, we say that a sequence of points $z_t\in C_t$ \emph{tropically converges} to $p$ (or admits $p$ as tropical limit) if $\lim_{t\to 0} \Log_t(z_t)=p$. It is a classical fact that any sequence $z_t\in C_t$ that converges to a smooth point of $C_0$ tropically converges to a vertex of $\Gamma$. It can be checked by hand, using appropriate coordinate systems.

Eventually, we say that $\Gamma$ is \emph{non-singular} if it is a trivalent graph and if any edge is of weight $1$. By duality, it means that each $\Delta_k$, $k\in I$, is a triangle with area $\frac{1}{2}$.
Again, We refer to \cite{BIMS} for further details.

\subsection{Newton-Puiseux Theorem}\label{sec:npt}

The material of this section will be used exclusively to prove Theorem \ref{thm:asymptotic}. The reader may skip this section at her/his own convenience. Below, we state a simple version of the Newton-Puiseux Theorem for space curves as in \cite[Theorem, Section 3]{Mau}. In order to do so, we need to recall some terminology from the latter reference. 

 Let $v=(v_1,v_2,v_3)\in (\bZ_{\geqslant 0})^3$ be non-zero. A polynomial $g(z,w,t)$ is homogeneous of $v$-order $d$ if $g(s^{v_1},s^{v_2},s^{v_3})$ is a monomial of degree $d$ in the variable $s$. Every polynomial $g(z,w,t)$ can be uniquely written as $g:= \sum_{j\in \bZ_{\geqslant 0}} g_j$ where $g_j$ is homogeneous of $v$-order $j$.
 We refer to $d:=\min\{j\in\bZ_{\geqslant 0}\, \vert\, g_j\neq 0 \}$  as the \emph{initial $v$-order} of $g$ and define the \emph{$v$-initial form} of $g$ by $\emph{\ini_v g}:=g_d$. By extension, we define the $v$-initial form of an ideal $I\in \bC[z,w,t]$ by $\emph{\ini_v I}:=\big(\{\ini_v g \; \vert \; g \in I\}\big)$. A \emph{tropism} of an ideal $I$ is a primitive vector $v\in (\bZ_{\geqslant 0})^3$ such that $\ini_v I$ does not contain any monomial. Geometrically, this is equivalent to require that $-v$ belongs to the dual fan of the Newton polyhedron $\New(g)$ for any $g\in I$. 
 \begin{Example}
 Let $g(z,w,t)=1+z+w+t$. Then $\ini_{(2,1,1)}g=1$ and $\ini_{(0,1,1)}g=1+z$. In fact $(0,-1,-1)$ belongs to the face of the dual fan generated by $(-1,-1,-1)$ and $(1,0,0)$, and $(-2,-1,-1)$ does not belong to the dual fan.
 \end{Example}

The statement below is a simpler version of the Newton-Puiseux Theorem as stated in \cite[Theorem, Section 3]{Mau}.

\begin{thm}\label{thm:npt}
Let $f_1(z,w,t)$ and $f_2(z,w,t)$ be two polynomials such that $X:=\{f_1=f_2=0\}\subset \bC^3$ is one-dimensional. Then, any irreducible component of the reduction of $X$ that passes through $0\in \bC^3$ and that is not contained in a coordinate hyperplane can be parametrised as follows
$$ z=\alpha s^{v_1}+ o(s^{v_1}), \quad  w=\beta s^{v_2}+ o(s^{v_2}) \;\text{ and }\; t=s^{v_3}+ o(s^{v_3})$$
where $(\alpha,\beta)\in \ttor$ and $(v_1,v_2,v_3)$ is a tropism of the ideal $\big(f_1,f_2\big)$.
\end{thm}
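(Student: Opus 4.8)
The statement to prove is Theorem \ref{thm:npt}, the Newton--Puiseux theorem for space curves. Since the excerpt explicitly says this is ``a simpler version of the Newton-Puiseux Theorem as stated in \cite[Theorem, Section 3]{Mau}'', the natural plan is to \emph{deduce} it from the cited result rather than reprove Newton--Puiseux from scratch.

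\medskip

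\textbf{Proof plan.} The plan is to reduce to the statement in \cite{Mau} by a bookkeeping argument on tropisms and Puiseux exponents. First I would recall that Maurer's theorem produces, for each branch of $X$ through the origin not contained in a coordinate hyperplane, a parametrisation by fractional power series: the branch is the image of a map $s \mapsto (z(s),w(s),t(s))$ where each coordinate is a Puiseux series in $s$ with nonnegative exponents, the lowest-order exponents forming (after clearing denominators to a primitive integer vector) a tropism $v=(v_1,v_2,v_3)$ of the ideal $(f_1,f_2)$, and the corresponding leading coefficients being nonzero. The content of the cited theorem is precisely the correspondence between branches and tropisms together with the fact that along a genuine branch not in a coordinate hyperplane all three leading coefficients are nonzero. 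So the first step is to quote this and fix notation: write $z = \alpha s^{v_1} + (\text{higher order})$, $w = \beta s^{v_2} + (\text{higher order})$, $t = c\, s^{v_3} + (\text{higher order})$ with $\alpha,\beta,c \in \bC^*$.

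\medskip

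The second step is the only genuine manipulation: normalising the leading coefficient of $t$ to $1$. Since $c \neq 0$ and $v_3 > 0$ (we have $v_3 \neq 0$ because $t$ is not identically $0$ on a branch not contained in the hyperplane $\{t=0\}$, and $v_3 \geqslant 0$ since $v$ is a tropism lying in the relevant cone of the dual fan), we may reparametrise by replacing $s$ with $c^{-1/v_3} s$ — or, staying within integer powers, by first refining the parameter so that all exponents are integers and then rescaling. After this substitution the expansion of $t$ becomes $t = s^{v_3} + o(s^{v_3})$ exactly as in the statement, while the expansions of $z$ and $w$ keep the form $\alpha' s^{v_1} + o(s^{v_1})$, $\beta' s^{v_2} + o(s^{v_2})$ with new nonzero leading coefficients; rename these $\alpha,\beta$. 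One should remark that $(\alpha,\beta)\in\ttor$ is exactly the assertion that these two leading coefficients are nonzero, which is where ``not contained in a coordinate hyperplane'' is used for the $z$ and $w$ coordinates (if $\alpha=0$ the branch would lie in $\{z=0\}$, etc.). Finally, $v=(v_1,v_2,v_3)$ being a tropism of $(f_1,f_2)$ is built into Maurer's statement; one can also recall the geometric reformulation given just above in the excerpt, namely that $-v$ lies in the dual fan of $\New(g)$ for every $g\in(f_1,f_2)$, which makes the tropism condition transparent once a parametrisation is in hand (plug the series into $g$ and observe the lowest-order terms cannot cancel to give a monomial).

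\medskip

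\textbf{Main obstacle.} There is essentially no deep obstacle; the work is entirely in matching conventions. The one point that needs care is the passage from Puiseux (fractional) exponents to the integer tropism $v$ and simultaneously to an honest parametrisation by a single uniformiser $s$ with $t = s^{v_3}+\cdots$: one must choose $s$ so that all three series have integer exponents and the parametrisation is generically one-to-one onto the branch (i.e. one does not accidentally parametrise a multiple cover), which is the standard primitivity/normalisation step in Newton--Puiseux and is exactly what forces $v$ to be primitive. I would state this carefully but not belabour it, since it is classical. The only other thing to verify is that the exponents are genuinely nonnegative and $v_3>0$ under the hypothesis that the branch passes through $0$ and is not in a coordinate hyperplane — this is immediate from $\Log_t$-type estimates or directly from the cited theorem — so the whole argument is a short deduction.
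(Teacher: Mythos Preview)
Your approach is correct and matches the paper's treatment exactly: the paper does \emph{not} prove Theorem~\ref{thm:npt} at all but simply states it as a simplified version of \cite[Theorem, Section 3]{Mau}, to be used as a black box in Section~\ref{sec:proof}. Your plan to deduce it from Maurer's result by normalising the uniformiser so that $t=s^{v_3}+o(s^{v_3})$ is precisely the bookkeeping implicit in calling it ``a simpler version'', and there is nothing further to do.
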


\section{Main results}\label{sec:main}

The following theorem is a patchworking theorem for the Log-critical locus of curves $\{C_t\}_{t\in \bC}$ defined by a Viro polynomial $f_t$ as defined in Section \ref{sec:viropol}. Recall that we only consider Viro polynomials $f_t(z,w)$ that are polynomial in $t$, according to Definition \ref{def:div}. 

\begin{theorem}\label{thm:patchwork}
Let $f_t$ be a Log non-degenerate Viro polynomial, see section \ref{sec:logcrit}. Then, there exist $\eps>0$ and a dense open subset $U\subset \{t\in \bC^\ast \big|\, \vert t \vert <\eps\}$ such that for any $t\in U$, the topological pair $\big(C_t, \Cr(C_t)\big)$ is obtained from $\big(C_0,\Cr(C_0)\big)$ by replacing the neighbourhood of every node of $C_0$ with the connected deformation of Figure \ref{fig:defC_0}. In particular, the locus $\Cr(C_t)$ is smooth for any $t\in U$.
\end{theorem}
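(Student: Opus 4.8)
The strategy is to reduce the global statement to a local analysis near each node of $C_0$, using the constancy of $b_0(\Cr(C_t))$ and the fact (Section~\ref{sec:loggauss}) that $\gamma_t$ has constant degree $2\vol(\Delta)$ for $|t|$ small. Away from the nodes of $C_0$, the family $C_t\to C_0$ is a smooth deformation and the logarithmic Gauss maps $\gamma_t$ converge uniformly to $\gamma_0$; hence on the complement of small disks around the nodes, the pair $(C_t,\Cr(C_t))=(\gamma_t^{-1}(\bR P^1))$ is isotopic to the corresponding piece of $(C_0,\Cr(C_0))$ for $|t|$ small. So the whole content is: \emph{for a generic small $t$, in a neighbourhood of each node $p$ of $C_0$, the pair $(C_t,\Cr(C_t))$ is the connected deformation of Figure~\ref{fig:defC_0}.} The genericity (the dense open $U$) will be needed precisely to avoid the measure-zero set of parameters where a Log-inflection point escapes to the boundary, where $\Cr(C_t)$ acquires a singularity, or where the asymptotic degenerates.

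The engine for the local analysis is Theorem~\ref{thm:asymptotic}, which describes the asymptotics of the Log-inflection points of $C_t$ that concentrate at a node $p$ of $C_0$ as $t\to 0$. Near $p$, after a toric change of coordinates making the relevant edge of $\Delta_\nu$ horizontal, the node is an ordinary double point of $C_0$, i.e. two smooth branches of $\Cr(C_0)$ crossing transversally (Figure~\ref{fig:defC_0}, center); recall $p\in\Cr(C_0)$ automatically since $p$ lies on toric divisors of the components $X_{\Delta_k}$. The local smoothing $C_t$ replaces the node by an annulus, and by Theorem~\ref{thm:asymptotic} a controlled number of Log-inflection points of $C_t$ sit inside this annulus, with their branch points in $\bC P^1$ converging to prescribed points relative to $\bR P^1$. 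One then argues that, among the finitely many topological types of smooth deformation compatible with the given orientation of $\Cr(C_0)$ and with $b_0$ preserved, only the connected deformation realizes the pullback $\gamma_t^{-1}(\bR P^1)$ dictated by that asymptotic branch-point configuration. Concretely: if the deformation were the \emph{singular} one or a disconnecting smooth one, the local picture of $\gamma_t^{-1}(\bR P^1)$ in the annulus would force either a tangency of $\Cr(C_t)$ with itself or a different count of arcs crossing the annulus, contradicting the location of the branch points coming from Theorem~\ref{thm:asymptotic}. This is where the Log non-degeneracy hypothesis enters: it guarantees that each branch of $\Cr(C_0)$ at $p$ is smooth and oriented, so the local model is exactly the one in Figure~\ref{fig:defC_0} and the case analysis is finite.

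The main obstacle is the last step: extracting, from the asymptotic description of the Log-inflection points (Theorem~\ref{thm:asymptotic}), the precise isotopy type of $\Cr(C_t)=\gamma_t^{-1}(\bR P^1)$ in the annulus replacing the node. This requires understanding not merely \emph{where} the branch points of $\gamma_t$ land in $\bC P^1$ but \emph{on which side} of $\bR P^1$, since that sign is exactly what distinguishes the connected deformation from the singular one; equivalently, one must track the local monodromy of $\gamma_t$ around the concentrating inflection points and match it with the orientation on $\Cr(C_0)$ inherited from $\bR P^1$. I expect this to be handled by writing the local parametrization of $C_t$ near $p$ via the Newton--Puiseux Theorem~\ref{thm:npt}, plugging into the parametrized form \eqref{eq:gammaparam} of $\gamma_t$, and reading off the real-versus-imaginary behaviour of the resulting Puiseux series in $t$; the generic choice of $t\in U$ is what makes these leading coefficients nonzero and the configuration transverse. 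Once the local model near every node is pinned down as the connected deformation, gluing with the (trivial) deformation on the complement yields the global pair $(C_t,\Cr(C_t))$, and smoothness of $\Cr(C_t)$ follows since the connected deformation is smooth and no inflection point has escaped to the toric boundary for $t\in U$.
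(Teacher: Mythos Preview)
Your plan is correct and follows the same route as the paper. The obstacle you flag at the end is in fact already resolved by the precise form of Theorem~\ref{thm:asymptotic}: the two concentrating branch points have $\gamma_t$-images $\pm\lambda t^{\kappa/2}+o(t^{\kappa/2})$, so to leading order they are negatives of one another and therefore lie in \emph{opposite} hemispheres of $\bC P^1\setminus\bR P^1$ whenever neither is real; the bad locus $\mathcal D_p\subset U_\eps$ where they land on $\bR P^1$ is, by this asymptotic, a finite union of arcs in the punctured $t$-disc, and $U$ is the complement of $\bigcup_p\mathcal D_p$. With one simple branch point in each half of the image disc, the annulus replacing the node is a degree-$2$ branched cover of that disc and the pullback of $\bR P^1$ is immediately the connected deformation of Figure~\ref{fig:defC_0} --- no case-by-case elimination is needed.
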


\begin{Remark}
For $t\notin U$, there is at least one node of $C_0$ whose neighbourhood is replaced by the singular deformation pictured in Figure \ref{fig:defC_0}. In general, it follows from the asymptotic formula \eqref{eq:asymptotic} that the connected and the singular deformations are the only deformations possible for a Log non-degenerate Viro polynomial. If the latter polynomials is real, it follows again from \eqref{eq:asymptotic} that at any given node of $C_0$, one of the deformations appear for $t>0$ and the other deformation appear for $t<0$. 
\end{Remark}

Using the above theorem, we are able to construct projective curves $C$ of any degree such that the Log-critical Locus $\Cr(C)$ is smooth and has a small number of connected components. 
Below, we denote by $\Delta_d$ the standard $d$-simplex, that is $\Delta_d:=\conv\big((0,0), (d,0), (0,d)\big)$.

\begin{theorem}\label{thm:applicationpatchwork}
For any integer $d\geqslant 3$ and any integer $1\leqslant b \leqslant \left(\begin{smallmatrix} d-1\\2\end{smallmatrix}\right)+1$, there exists a smooth Laurent polynomial $f$ with Newton polygon $\Delta_d$ such that the Log-critical locus $\Cr(f)$ is smooth and has exactly $b$ connected components.
\end{theorem}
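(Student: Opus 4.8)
The plan is to build $f$ by iterated patchworking, using Theorem \ref{thm:patchwork} to control the Log-critical locus at each step. The strategy is induction on $d$, combined with a choice of subdivision of $\Delta_d$ into a ``large'' triangle carrying one of the building blocks from Section \ref{sec:auxilliarycurves} and a collection of small triangles whose truncated curves have connected Log-critical locus. Concretely, for the base case one exhibits, for each admissible $b$, an explicit Log non-degenerate Viro polynomial with Newton polygon $\Delta_3$ (resp. $\Delta_d$) whose central fibre $C_0$ is a nodal union of lines and conics: by the classification of the building blocks, $\Cr(C_0)$ decomposes along the irreducible components, and one counts $b_0(\Cr(C_0))$ in terms of the number of nodes at which the connected deformation merges two components. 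Applying Theorem \ref{thm:patchwork}, the generic nearby fibre $C_t$ then has $\Cr(C_t)$ smooth with exactly $b_0(\Cr(C_0)) - \#\{\text{nodes}\}$ components, and by tuning the subdivision (hence the number of nodes glued) one realises every value of $b$ in the stated range.

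First I would set up the building blocks: recall from Section \ref{sec:auxilliarycurves} the curves in $\Sigma_1$ meeting $\{z=0\}$ once and with each intersection point of $\{w=\infty\}$ lying on its own component of the Log-critical locus; dually, these correspond to triangles (or trapezoids) in a subdivision of $\Delta_d$ of the form $\conv((0,0),(d-1,0),(0,d-1))$ together with a strip of unit triangles along the slanted edge $x+y=d$. The simple Harnack curve of degree $d$ gives the maximal value $b = \binom{d-1}{2}+1$ directly (no patchwork needed, by \cite{Mikh}). To go below that, I would take a subdivision in which $\binom{d-1}{2}+1-b$ of the lattice triangles are chosen so that the connected deformation at the corresponding nodes fuses pairs of previously-distinct components; since each such deformation drops $b_0$ by exactly one while keeping $\Cr(C_t)$ smooth, iterating this $\binom{d-1}{2}+1-b$ times lands on the target $b$. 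The inductive step multiplies a degree-$(d-1)$ example by a linear form and adds a small perturbation supported on the remaining triangles, exactly as in the cubic example $\hat f = z\tilde f - 8\cdot 10^{-7}(w+1)(w+10)(w+100)$ of the introduction.

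The main obstacle, and the step needing the most care, is verifying the \emph{Log non-degeneracy} hypothesis of Theorem \ref{thm:patchwork} for the chosen Viro polynomials — i.e.\ that each truncation $f^{\Delta_k}$ defines a smooth curve transverse to the toric boundary \emph{and} that $\Cr(C_0)$ meets each irreducible component of $C_0$ in a smooth curve — together with the bookkeeping that the specific nodes one wants to smooth via the connected (rather than singular) deformation can indeed all be made connected simultaneously for a common generic $t$. The first part is handled by the explicit geometry of the building blocks (their Log-critical loci are computed in Section \ref{sec:auxilliarycurves}) plus genericity of the coefficients; the second is exactly the content of the dense open set $U$ in Theorem \ref{thm:patchwork}, so it suffices to observe that the connected deformation at each node is an independent generic condition, hence their conjunction is still generic. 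A secondary point is to check that the number of components one computes for $C_0$ is the ``folded'' count along components glued at nodes — here one uses that each node of $C_0$ lies on $\Cr(C_0)$ (as recalled after the definition of $\Cr(C_0)$), so the connected deformation genuinely decreases $b_0$ by one, and that $b_0(\Cr(C_t))$ is constant on $U$ by the degree argument of Section \ref{sec:logcrit}. Finally, one records that the same construction with $d$ replaced by any $d' \geq 3$ and suitable choice of strips covers all of $\Delta_d$, completing the induction.
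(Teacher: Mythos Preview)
Your proposal misreads Theorem \ref{thm:patchwork} in two essential ways, and this undermines the whole argument.

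First, you do not get to choose at which nodes the connected deformation occurs: Theorem \ref{thm:patchwork} says that for every $t$ in the dense open set $U$, \emph{every} node of $C_0$ undergoes the connected deformation simultaneously. There is no mechanism for ``tuning the subdivision so that $\binom{d-1}{2}+1-b$ of the nodes fuse components while the rest do not.'' Second, your formula $b_0(\Cr(C_t))=b_0(\Cr(C_0))-\#\{\text{nodes}\}$ is false. At a node of $C_0$ the four incoming half-arcs of $\Cr(C_0)$ all pass through the single nodal point; the connected deformation resolves them into two disjoint arcs on the cylinder $C_t\cap\mathcal V$, paired in the unique way compatible with the orientation induced by $\gamma_t$. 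Whether this raises, lowers, or preserves the global $b_0$ depends entirely on which global components the four half-arcs belong to, not on the number of nodes. Your induction step (``multiply by a linear form and perturb'') inherits the same problem: without knowing the boundary incidence of $\Cr$ on the degree-$(d-1)$ piece you cannot predict the outcome of the gluing.

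The paper's proof controls $b$ by a completely different lever: it fixes a single subdivision of $\Delta_d$ into trapezoids $T_1,\dots,T_{d-1}$ and a strip of unit squares $S_1,\dots,S_d$, and then varies the \emph{choice of building block} on each $T_j$. Two types are available from Section \ref{sec:auxilliarycurves}: the curves $g$ of Proposition \ref{prop:deformation}, whose Log-critical locus has $d-j+1$ components with prescribed incidence to the toric boundary (all the $a_i$ and the point on $\{z=0\}$ on one component, each $b_i$ alone on its own), and the Harnack-type curves $h$ of Lemma \ref{lemma:Harnackdeforme} with connected Log-critical locus. Because the boundary incidence is known, one can trace exactly how the connected deformation at every node wires the components together across the stack of trapezoids, and an explicit count shows that assigning $g$-type to $T_1,\dots,T_\ell$ and to one further $T_r$, and $h$-type to the rest, yields precisely $b$ components for the right choice of $\ell$ and $r$. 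No induction on $d$ is used.
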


\begin{Remark}
The above theorem disprove \cite[Conjecture 1]{L19} and points out a missing assumption in \cite[Proposition 6.3]{L19}: using the notations of \cite{L19}, we should assume that $\mathcal{A}$ restricts to an immersion on the Log-critical locus $S(f)$ for the statement to hold.
\end{Remark}

In order to prove Theorem \ref{thm:patchwork}, we need to study the asymptotical behaviour of Log-inflection points along families of curves defined by Viro polynomials. Theorem \ref{thm:patchwork} relies principally on Theorem \ref{thm:asymptotic}, stated and proven in Section \ref{sec:proof}. As a by product of the latter study, we obtain the theorem below which generalises \cite[Theorem 3]{MikhRena}.

\begin{theorem}\label{thm:tropicallimit}
Let $f_t$ be a Log non-degenerate Viro polynomial and let $\Gamma\subset \bR^2$ be the tropical limit of $\{C_t\}_{t\in \bC}$. Then, for any bounded edge $\eps \subset \Gamma$ with multiplicity $m\geqslant 1$ and midpoint $p\in \eps$, there are exactly $2m$ ramification point for $\gamma_t$ in $C_t$ whose tropical limit is $p$. The tropical limit of the remaining ramification points are distributed among the vertices of $\Gamma$.
\end{theorem}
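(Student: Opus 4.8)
\textbf{Proof proposal for Theorem \ref{thm:tropicallimit}.}

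The plan is to deduce this statement from the asymptotic analysis carried out for Theorem \ref{thm:asymptotic}, treating it as a degeneration statement that cleanly separates the ramification points of $\gamma_t$ into two populations: those absorbed near nodes of $C_0$ and those staying near smooth points. First I would recall the total count: by non-degeneracy, $\gamma_t$ has degree $2\vol(\Delta)$ for small $\vert t\vert$, hence $2(2\vol(\Delta)+g-1)$ ramification points by Riemann-Hurwitz, where $g$ is the genus of $C_t$ (equivalently of a smooth curve in $\vert\mathcal{L}_\Delta\vert$). The key observation is that, via the toric $3$-fold $X_{\Delta_\nu}$, every sequence $z_t\in C_t$ either converges to a smooth point of $C_0$ — in which case, as recalled in Section \ref{sec:tropgeo}, it tropically converges to a vertex of $\Gamma$ — or converges to a node of $C_0$, which corresponds to the midpoint of a bounded edge of $\Gamma$ of the appropriate multiplicity (bounded edges of $\Gamma$ are dual to interior edges of the subdivision, i.e. to the nodes of $C_0$; an edge $\eps$ of multiplicity $m$ is dual to a segment of lattice length $m$, so the local intersection of the two branches at the corresponding node of $C_0$ has multiplicity $m$). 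Thus the tropical limit of a ramification point is automatically a vertex of $\Gamma$ unless that point converges to a node, in which case the limit is the midpoint of the corresponding bounded edge. This reduces the theorem to a purely local count: how many ramification points of $\gamma_t$ concentrate at a given node $p$ of $C_0$ with branch-intersection multiplicity $m$?

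Next I would invoke Theorem \ref{thm:asymptotic} (and its asymptotic formula \eqref{eq:asymptotic}) to perform this local count. Near a node $p$, after a toric change of coordinates making the relevant face of $\Delta_\nu$ horizontal, the curve $C_0$ is locally the union of two smooth branches meeting with intersection multiplicity $m$; the defining polynomial there has Newton polygon a small parallelogram/triangle of lattice width $m$ in the relevant direction. A Log-inflection point is a zero of the Wronskian-type expression $z\partial_z f_t \cdot \partial_s(w\partial_w f_t) - w\partial_w f_t\cdot \partial_s(z\partial_z f_t)$ along a local parametrisation (equivalently, a zero of the derivative of the slope of $\gamma_t$ in a chart), which one checks is cut out by a second polynomial equation so that the Log-inflection locus of $\{C_t\}$ is a space curve in $(\bC^*)^3$ to which the Newton–Puiseux Theorem \ref{thm:npt} applies. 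Counting the branches of this space curve that pass through $(p,0)$ with a tropism of the form $(v_1,v_2,v_3)$ with $v_3>0$ — i.e. genuinely escaping to the node as $t\to 0$ — should give exactly $2m$, matching the statement. This is precisely the content that Theorem \ref{thm:asymptotic} packages: for each node with multiplicity $m$ there are $2m$ Log-inflection points whose asymptotics are governed by \eqref{eq:asymptotic}, and they all tropically converge to the midpoint of the dual bounded edge.

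Finally I would assemble the global count. Summing $2m$ over all bounded edges $\eps$ of $\Gamma$ gives $2\sum_{\eps} m(\eps)$, which equals twice the number of interior edges of the subdivision counted with lattice length. A Euler-characteristic / lattice-point bookkeeping for the convex subdivision $\{\Delta_k\}$ of $\Delta$ (relating the number of interior edges, weighted by length, to $\vol(\Delta)$, the number of interior and boundary lattice points, and the genera of the pieces) shows that the number of ramification points not accounted for at the nodes is exactly what is needed for the remaining ones to distribute among the (finitely many) vertices of $\Gamma$; since any ramification point converging to a smooth point of $C_0$ tropically converges to a vertex, "the remaining ramification points are distributed among the vertices of $\Gamma$" follows without needing to pin down how many land at each vertex. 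I expect the main obstacle to be the local count of $2m$ at a node of multiplicity $m>1$: when $m=1$ the node is an ordinary transverse intersection and one recovers the $2$ points per bounded edge of \cite[Theorem 3]{MikhRena}, but for $m>1$ the two branches are tangent to order $m$ and one must carefully identify the tropisms of the Log-inflection space curve — in particular rule out spurious branches that stay along $C_0$ (coordinate hyperplanes or branches with $v_3=0$) and confirm that the surviving ones number exactly $2m$ with midpoint as tropical limit. This is exactly where Theorem \ref{thm:asymptotic} does the heavy lifting, so in the write-up I would state the reduction carefully and then cite \eqref{eq:asymptotic} for the local input.
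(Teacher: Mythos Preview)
Your overall strategy --- separate the ramification points into those converging to nodes of $C_0$ and those converging to smooth points, then invoke Theorem \ref{thm:asymptotic} for the local count --- matches the paper's. However, there is a genuine conceptual error in how you interpret the multiplicity $m$.

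You write that an edge $\eps\subset\Gamma$ of multiplicity $m$ is dual to a lattice segment of length $m$ and therefore corresponds to \emph{one} node of $C_0$ at which the two local branches meet with intersection multiplicity $m$. This is incorrect in the Log non-degenerate setting. Non-degeneracy requires each truncation $f^{\Delta_k}$ to be transverse to every toric divisor of $X_{\Delta_k}$; in particular the truncation of $f$ to the dual interior edge $e$ of lattice length $m$ has $m$ \emph{distinct simple} roots. Hence the two components of $C_0$ meeting along the divisor $X_e$ do so at $m$ distinct ordinary (transverse) nodes, not at a single tangential one. Theorem \ref{thm:asymptotic} then gives exactly \emph{two} Log-inflection points concentrating at each such node, and the count $2m$ arises as $m$ nodes times $2$, not as $2m$ points at a single node. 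Your discussion of the ``main obstacle'' (tangential nodes with $m>1$) is therefore addressing a situation that does not occur here; no extension of Theorem \ref{thm:asymptotic} to higher-order tangencies is needed.

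A second gap: you assert that points converging to a node tropically converge to the \emph{midpoint} of the dual edge, but you do not justify why it is the midpoint rather than some other point of $\eps$. The statement of Theorem \ref{thm:asymptotic} (and formula \eqref{eq:asymptotic}, which gives the asymptotic of $\gamma_t$ at the ramification points) does not by itself locate the points in $\ttor$. What you need is the parametrisation \eqref{eq:paramsolevenodd} from the \emph{proof} of Theorem \ref{thm:asymptotic}: in the normalised coordinates the ramification points satisfy $z\sim \alpha\, t^{\kappa/2}$ and $w\sim 1$, so $\Log_t$ sends them to $(-\kappa/2,0)+o(1)$, which is the midpoint of the edge with endpoints $(0,0)$ and $(-\kappa,0)$. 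This is exactly how the paper argues.

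Finally, your third paragraph (Euler-characteristic bookkeeping to balance the global count) is unnecessary. Once you know that every ramification point of $\gamma_t$ converges either to a node of $C_0$ or to a ramification point of $\gamma_0$ on the smooth locus, and that the latter tropically converge to vertices of $\Gamma$ (Section \ref{sec:tropgeo}), the statement about the remaining points follows immediately; no numerical matching is required.
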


\begin{Remark}
The applications of the asymptotic formulas obtained in Section \ref{sec:proof} reach further than the above statement. Indeed, it allows us to determine the phase-tropical limit of the Log-inflection points. A detailed treatment of this aspect would lead us too far from the main subject here. Let us at least mention the following. Assume that $\Gamma$ is non-singular (and therefore $m=1$ for any bounded edge) and that we consider a continuous path of parameters $t\in \bC$ ending at $0$. Then, we can define the phase-tropical limit $\Gamma_\bC\subset \ttor$ of $C_t$ along this path, see for instance \cite{L} for some background on phase-tropical curves. The phase-tropical limit $\Gamma_\bC\subset \ttor$ is mapped to $\Gamma$ under $\Log$ and the fiber in $\Gamma_\bC$ over any non-vertex point in $\Gamma$ is a geodesic in the argument torus $(S^1)^2$. Then, it follows from the asymptotic formula \eqref{eq:paramsolevenodd} that the two Log-inflection points tropically converging to the midpoint $p$ of a given edge $\eps\in \Gamma$ also converges phase-tropically. Moreover, the two limit points in $\ttor$ are equi-distributed on the geodesic fiber over $p$. Eventually, let us point out that the computation carried in the proof of Theorem \ref{thm:asymptotic} could be extended to non-generic Viro polynomials and lead to a generalisation of Theorem \ref{thm:tropicallimit}, as illustrated in the appendix.
\end{Remark}

\section{Proofs of Theorems \ref{thm:patchwork} and \ref{thm:tropicallimit}}\label{sec:proof}

In this section, we assume that $f_t(z,w)$ is a Log non-degenerate Viro polynomial constructed from a $\bZ$-convex function $\nu:\Delta \rightarrow \bR$, see Sections \ref{sec:viropol} and \ref{sec:logcrit}.
For $U_\eps:=\{t\in \bC \big| \vert t\vert < \eps\}$, we denote by $\mathscr{C}\rightarrow U_\eps$ the family of curve whose fiber over $t\in U_\eps$ is $C_t:=\overline{Z(f_t)}\subset X_{\Delta_\nu}$, see Section \ref{sec:viropol}. 
The parameter $\eps>0$ is assumed to be arbitrarily small. In particular, every curve $C_t$ is smooth for $t\in U_\eps$.

Under the above assumptions, the Log-Gauss map $\gamma_t:=\gamma_{\s{f_t}}:C_t\rightarrow \bC P^1$ has constant degree $2\vol(\Delta)$ for any $t\in U_\eps$, see Section \ref{sec:loggauss}. Moreover, the collection of maps $\gamma_t$, $t\in U_\eps$,  induces a globally defined algebraic map $\Gamma: \mathscr{C}\rightarrow \bC P^1$. It follows from the Riemann-Hurwitz formula that for any node $p\in C_0$, there exists a small neighbourhood $\mathcal{V}\subset \mathscr{C}$ of $p$ which is onto $U_\eps$ and such that $C_t\cap \mathcal{V}$ contains exactly $2$ ramification points of $\gamma_t$ for any $t\neq 0$.  
Indeed, since $f_t$ is Log non-degenerate, the set $\Cr(C_0)$ does not contain any Log-inflection point. Therefore, there exists an arbitrarily small neighbourhood $\mathcal{U}\subset \bC P^1$ of $\gamma_0(p)$ such that the connected component of $\gamma_0^{-1}(\mathcal{U})$ containing $p$ does not contain any Log-inflection point. Define $\mathcal{V}\subset \mathscr{C}$ to be the connected component of $\Gamma^{-1}(\mathcal{U})$ that contains $p$. Now that $\mathcal{V}$ is defined, let us compare the Riemann-Hurwitz formula applied to $\gamma_t: C_t\cap \mathcal{V}\rightarrow \bC P^1$ for $t\neq 0$ to the same formula applied to the pullback of $\gamma_0$ to the normalisation of $C_0\cap \mathcal{V}$.  The  degrees of the maps do not change while the Euler characteristic of the source increases by $2$ at $t=0$ (a cylinder versus the disjoint union of two discs). Therefore, there has to be two distinct Log-inflection points in $C_t\cap \mathcal{V}$ that collide at the node $p$ when $t$ tends to $0$.

Our main goal in this section is to prove the following theorem.

\begin{theorem}\label{thm:asymptotic}
Let $f_t$ be a Log non-degenerate Viro polynomial, see section \ref{sec:logcrit}. For any node $p\in C_0$, there exist $\lambda \in \bC^\ast$ and an integer $\kappa$ such that the image under $\gamma_t$ of each of the two Log-inflection points in $C_t\cap \mathcal{V}$ is equal to
\[
\pm \lambda t^{\kappa/2} + o(t^{\kappa/2})
\] in the appropriate affine chart of $\bC P^1$.
\end{theorem}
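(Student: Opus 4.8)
The plan is to work in local coordinates around the node $p\in C_0$ adapted to the toric $3$-fold $X_{\Delta_\nu}$, and to extract the asymptotics of the two colliding Log-inflection points by a Newton-Puiseux analysis of the ramification equation of $\gamma_t$. Since $p$ is a node of $C_0$ and $C_0\subset\bigcup_k X_{\Delta_k}$, after a toric change of coordinates $(z,w,t)\mapsto (z,w,tz^aw^b)$ we may assume $p$ lies on the edge $e=\Delta_{k}\cap\Delta_{l}$ of the subdivision, with $\nu$ vanishing on a neighbourhood of $e$; in these coordinates $p=(z_0,w_0)\in\ttor$ is an ordinary double point of the plane curve $\{F=0\}$, where $F(z,w,t)=f_t(z,w)$ restricted to the relevant affine chart, with $F(z,w,0)=f^{\Delta_k}(z,w)f^{\Delta_l}(z,w)$ (up to units). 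First I would write $\gamma_t$ in the affine chart of $\bCP^1$ centred at $\gamma_0(p)$ — concretely, using \eqref{eq:gammaparam}, a local branch of $C_t$ through a point near $p$ has $\gamma_t$-value $[-d\log w:d\log z]$, and I would choose the chart so that $\gamma_0(p)$ is finite and equals the common tangent direction of the two branches of $C_0$ at $p$ (the two branches have the \emph{same} logarithmic tangent, which is what forces the ramification points to concentrate there). Call this affine coordinate $\xi$; then $\xi$ is a rational function of $(z,w,t)$ regular near $p$.

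Next I would set up the space curve $X\subset\bC^3$ cut out by $F=0$ together with the \emph{ramification equation} of $\gamma_t$, i.e.\ the condition that $d\xi$ vanishes along $C_t$. Parametrising $C_t$ locally and differentiating, this second equation is $\{z\partial_z F\cdot(\text{stuff})=0\}$ — more precisely, the Log-inflection points of $C_t$ are the common zeros of $F$ and of the $2\times 2$ Hessian-type determinant expressing $d(\gamma_t)=0$; in the chart, $\xi=-\,w\partial_wF/(z\partial_zF)$ (or its reciprocal), and a point of $C_t$ is a ramification point iff $F=0$ and $\{F,\,w\partial_wF\cdot z\partial_z(z\partial_zF)-z\partial_zF\cdot z\partial_z(w\partial_wF)\}=0$ after eliminating the parametrisation derivative — I would write $G(z,w,t)$ for this second polynomial, so the Log-inflection points near $p$ are the branches of $X=\{F=G=0\}$ through $(z_0,w_0,0)$ not contained in a coordinate hyperplane. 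I translate $(z_0,w_0)$ to the origin first. Now apply Theorem \ref{thm:npt}: each such branch is $z=z_0+\alpha s^{v_1}+o(s^{v_1})$, $w=w_0+\beta s^{v_2}+o(s^{v_2})$, $t=s^{v_3}+o(s^{v_3})$, with $(v_1,v_2,v_3)$ a tropism of $(F,G)$. Since we established via Riemann-Hurwitz that exactly two ramification points collide at $p$, the branch(es) involved account for exactly two preimages of $t$ near $0$, which pins down $v_3$ and the structure; then I substitute the Puiseux expansion into $\xi$ and read off the leading term, which will have the form $\xi = \lambda\, t^{\kappa/2}+o(t^{\kappa/2})$ with $\kappa=v_1/v_3$ or $v_2/v_3$ suitably interpreted (the half-integer power being exactly the signature of the two branches swapping under $s\mapsto -s$).

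The main obstacle, and the heart of the computation, is identifying the tropism $(v_1,v_2,v_3)$ and verifying the leading coefficient is a nonzero $\lambda\in\bC^\ast$ with the precise $\pm$ symmetry. This requires computing $\ini_v F$ and $\ini_v G$ explicitly near the node. For $F$ this is manageable: near an ordinary node, $F = Q(z-z_0,w-w_0) + (\text{higher order}) + t\cdot(\text{unit})$ with $Q$ a nondegenerate quadratic form, so the relevant face of $\New(F)$ and hence the candidate tropisms are controlled by how $t$ enters relative to the quadratic cone. For $G$ the subtlety is that $G$ vanishes to order (generically) one less along each branch of $C_0$ than one might naively expect, because the two branches share their logarithmic tangent at $p$ — this is precisely the input that makes ramification points appear — and one must check that $\ini_v G$ does not contain a monomial (so that $v$ is genuinely a tropism) and that its zero locus meets $\{\ini_v F=0\}$ transversally in $\ttor$, giving a single relevant branch with $(\alpha,\beta)\in\ttor$. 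I would handle this by choosing coordinates so that the common logarithmic tangent is an axis direction, Taylor-expanding both branches of $C_0$ there, and bookkeeping the orders carefully; the nondegeneracy part of \emph{Log non-degeneracy} should guarantee the genericity needed for $\lambda\neq 0$ and for $\kappa$ to be the generic (smallest) value, with the $o(t^{\kappa/2})$ uniform. The parity of $\kappa$, and whether the connected or singular deformation occurs, then falls out of whether $\lambda t^{\kappa/2}$ stays off $\bRP^1$ or crosses it as $t$ varies — but that analysis belongs to the proof of Theorem \ref{thm:patchwork}, not to Theorem \ref{thm:asymptotic} itself.
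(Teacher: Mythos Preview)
Your high-level strategy matches the paper's: reduce to a system $\{\tilde f_t=\widetilde N=0\}$ with $\widetilde N$ the numerator of the ramification condition, apply Newton--Puiseux (Theorem~\ref{thm:npt}) to extract branches, and then evaluate $\gamma_t$ on the resulting Puiseux parametrisation. However, your local model of the node is wrong, and this error propagates through the rest of the sketch.

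The node $p\in C_0$ does \emph{not} lie in $\ttor$: it sits on the toric divisor of $X_{\Delta_\nu}$ corresponding to the edge $e=\Delta_k\cap\Delta_l$. One cannot make $\nu$ vanish ``on a neighbourhood of $e$''; the convex function has a genuine kink along $e$, and the best one can do is set $\nu\equiv 0$ on $\Delta_k$ and $\nu|_{\Delta_j}=-\kappa j_1+\ell\kappa$ on the other side, where $\kappa\in\bN^*$ is the slope jump. Consequently $F(z,w,0)$ is simply $f^{\Delta_k}$, not the product $f^{\Delta_k}\cdot f^{\Delta_l}$, and the second branch of $C_0$ through $p$ lives in the \emph{other} toric surface $X_{\Delta_l}$. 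In the coordinates the paper uses (translating $w\mapsto \tilde w=w-1$ so that $p$ corresponds to $(z,\tilde w,t)=(0,0,0)$), the leading part of $\tilde f_t$ after clearing $z^{\ell-1}$ is
\[
c\,t^\kappa + b\,z\tilde w + a\,z^2 + \cdots,
\]
which is \emph{not} ``nondegenerate quadratic $Q$ plus $t\cdot(\text{unit})$'': the deformation parameter enters as $t^\kappa$, and the $(z,\tilde w)$-part $z(b\tilde w+az)$ is degenerate (its zero locus is the union of the two branches $z=0$ and $b\tilde w+az=0$). The integer $\kappa$ is therefore not ``the generic smallest value'' but is forced by the geometry of $\nu$; this is precisely what produces the exponent $\kappa/2$ in the statement.

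The second gap is that your outline does not explain why the tropism analysis of $\widetilde N$ works out. The paper spends Lemmas~\ref{lem:newton} and~\ref{lem:vorder} establishing that $\New(\widetilde N)$ is, in the relevant octant, the translate of $\New(\tilde f_t)$ by $(2\ell-1,0,0)$, and that modulo $\tilde f_t$ one has
\[
z\cdot\widetilde N \equiv -(c z^{\ell-1}t^\kappa+a z^{\ell+1})\, b^2 z^{2\ell} + o_v(\kappa(3\ell+1)).
\]
This identity (not a generic transversality argument) is what pins down the two branches with exponent vector $(\kappa/2,\delta,1)$ or $(\kappa,2\delta,2)$ and yields $\alpha^2=-c/a$, whence $\gamma_t=\pm\frac{2\sqrt{ac}}{b}\,t^{\kappa/2}+o(t^{\kappa/2})$. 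The Log non-degeneracy hypothesis enters earlier and more concretely than you suggest: it guarantees the vertices $q_2,q_3\in\widetilde\Delta$ (Lemma~\ref{lem:newton}(c)), i.e.\ that neither branch of $C_0$ is itself Log-inflected at $p$, which is what makes $a,b,c\in\bC^\ast$.
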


In the above statement, the integer $\kappa$ can be either even or odd. In the odd case, the choice of the determination of $t^{\kappa/2}$ is irrelevant.

In order to prove the above theorem, we will compute a parametrisation of the curve described by the position of the Log-inflections points in $C_t\cap \mathcal{V}$ when $t$ varies in $U_\eps$. It follows from the method of Lagrange multipliers that, for a fixed $t$, the Log-inflections points in $C_t$ are exactly the solutions of the system $\{f_t(z,w)=P(z,w,t)=0\}$ where 
\[
P(z,w,t) := \det \left( \begin{array}{cc}
    \partial_z f_t &  \partial_z \gamma_t\\
    \partial_w f_t & \partial_w \gamma_t
\end{array}\right).
\]
Since $\gamma_t$ is a rational function, so is $P$. If we denote by $\emph{N}$ the numerator of $P$, then the system $\{f_t=P=0\}$ is locally  equivalent to the system $\{f_t=N=0\}$ provided that the denominator of $P$ does not vanish. This will be the case in the application below.
We will obtain the parametrisation of the position of the Log-inflection points by applying the Newton-Puiseux Theorem to the system  $\{f_t(z,w)=N(z,w,t)=0\}$. 

For the sake of computation, we first proceed to some changes of coordinates. The node $p\in C_0$ lies on the intersection of two irreducible components of the section $\{t=0\}$ in $X_{\Delta_\nu}$. The latter intersection corresponds to an edge $e=\Delta_j\cap \Delta_k$ where $\Delta_j$ and $\Delta_k$ are polygons of the subdivision of $\Delta$ induced by $\nu$. Using a toric change of coordinates, we can assume without loss of generality that $e$ is directed by $(0,1)$ and that $\nu=0$ on the polygon of the subdivision of $\Delta$ to the right of $e$, say $\Delta_k$. We denote by $\ell$ the integer such that $e$ lies in $\{j_1=\ell\}$ (recall that $(j_1,j_2,j_3)$ are the coordinates on the lattice of monomials $\bZ^3$). The restriction of $\nu$ to $\Delta_j$ is of the form $-\kappa j_1 +\ell\kappa$ where $\kappa \in \bN^*$. 
Furthermore, 
we can multiply $f_t$ by a monomial $z^\alpha w^\beta$ to ensure that $\Delta_\nu$ lies in the positive octant, that is $f_t$ is an honest polynomial in $z,w,t$, and furthermore that $\ell\geqslant 2$.

Applying a toric translation $w\mapsto \alpha w$, $\alpha\neq 0$, if necessary, we can ensure that any sequence of points in $C_t$ that converges to $p\in X_{\Delta_\nu}$ when $t$ tends to $0$ converges to $(0,1,0)$ in the naive partial compactification $\bC^3 \supset (\bC^*)^3$. Therefore, we will work with the coordinates $(z,\tw,t)$ where $\tw=w-1$ on $\bC^3$.

We now define the polynomial $\emph{\tilde{f}_t(z,\tw)}:=f_t(z,\tw+1)=f_t(z,w)$ in the variable $(z,\tw,t)$ and denote by $\wD$ the Newton polyhedron of $\tilde{f}_t$. By a slight abuse of notation, we denote again by $(j_1,j_2,j_3)$ the coordinates on the space of monomials $\bZ^3$ associated to $(z,\tw,t)$.  In the proof of Theorem \ref{thm:asymptotic}, we will use the Newton-Puiseux Theorem on a pair of polynomials, one of which is $\tilde{f}_t$. Therefore, we need to gather some information on the polyhedron $\wD$. Below, we denote 
$$ \emph{H_1}:=\{j_3\geqslant -\kappa j_1 +\ell\kappa\}, \quad \emph{H_2}:=\{j_2\geqslant 0\}, \quad  \emph{H_3}:=\{j_3\geqslant 0\}$$
$$\emph{q_0}:=(\ell,0,0)=\partial H_1\cap \partial H_2\cap \partial H_3, \; \emph{q_1}:=(\ell, 1,0), \; \emph{q_2}:=(\ell+1, 0,0) \; \text{and} \; \emph{q_3}:=(\ell-1, \kappa,0).$$

\begin{Lemma}\label{lem:newton}
The polyhedron $\wD$ has the following properties:
\begin{enumerate}
    \item[(a)]  $\wD$ is contained in each of the half spaces $H_1$, $H_2$ and $H_3$,
    \item[(b)]  $\wD$ contains the lattice point $q_1$ but not the lattice point $q_0$,
    \item[(c)]  $\wD$ contains the lattice points $q_2$ and $q_3$.
\end{enumerate}
In particular, the only facets of $\wD$ whose outer normal vector lies in $(\bR_{<0})^3$ are contained in $\conv(q_0,q_1,q_2,q_3)$ and there are at most two of them.
\end{Lemma}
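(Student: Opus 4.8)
\textbf{Plan for the proof of Lemma \ref{lem:newton}.}
The idea is to translate each claim into an elementary statement about the defining exponents of $\tilde f_t$ and the combinatorics of the subdivision near the edge $e$. Recall that $f_t(z,w)=\sum a_{i,j}t^{\nu(i,j)}z^iw^j$, so that after substituting $w=\tw+1$ and expanding binomially, a monomial $z^i\tw^m t^{\nu(i,j)}$ appears in $\tilde f_t$ only if $m\leqslant j$ for some $(i,j)\in\Delta\cap\bZ^2$ with $a_{i,j}\neq 0$; in particular every exponent $(j_1,j_2,j_3)$ occurring in $\tilde f_t$ satisfies $j_1\geqslant 0$, $j_2\geqslant 0$ and $j_3=\nu(j_1,j_2')$ for some $j_2'\geqslant j_2$ with $(j_1,j_2')\in\Delta$. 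To establish (a) I would check each half-space separately: $H_2$ and $H_3$ are immediate from $j_2\geqslant 0$ and from $\nu\geqslant 0$ (which holds because $\nu$ vanishes on $\Delta_k$ and is convex, hence non-negative after our normalisation, combined with the assumption that $\Delta_\nu$ sits in the positive octant). For $H_1$, observe that $\nu\geqslant -\kappa j_1+\ell\kappa$ on all of $\Delta$: this is exactly the statement that the affine function $-\kappa j_1+\ell\kappa$ supports $\nu$ along $\Delta_j$, which is convexity of $\nu$ together with the fact that $\nu|_{\Delta_j}=-\kappa j_1+\ell\kappa$. Since the binomial substitution only lowers the $\tw$-degree and leaves the $(j_1,j_3)$-coordinates of each contributing monomial tied to a point of $\Delta$, the inequality $j_3\geqslant -\kappa j_1+\ell\kappa$ is inherited by every vertex of $\wD$.

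For (b) and (c) the point is to show that the relevant vertices of $\Delta$ actually contribute, after substitution, monomials with non-zero coefficient — here the non-degeneracy hypothesis on the Viro polynomial enters. The node $p$ corresponds to $(0,1,0)$ in the $(z,\tw,t)$-coordinates; equivalently $p$ lies on the toric divisor dual to the edge $e\subset\{j_1=\ell\}$ and at the intersection point of $Z(f^{\Delta_j})$ and $Z(f^{\Delta_k})$ with that divisor. The truncation $f^e$ to the edge $e$ is a polynomial in $w$ alone (times $z^\ell$) having a simple root at $w=1$ by our normalisation; this forces the coefficient of $z^\ell\tw^1$ in $\tilde f_t$ at the lowest $t$-order to be non-zero, giving $q_1\in\wD$, while the coefficient of $z^\ell\tw^0$ (the value of $f^e$ at $w=1$) vanishes, giving $q_0\notin\wD$. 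For (c) I would argue that $q_2=(\ell+1,0,0)$ comes from the monomial(s) of $f^{\Delta_k}$ lying strictly to the right of $e$: smoothness of $Z(f^{\Delta_k})$ and transversality to the divisor $\{j_1=\ell\}$ at $p$ forces $\partial_z f^{\Delta_k}(0,1)\neq 0$ after the substitution, which is precisely non-vanishing of the $z^{\ell+1}\tw^0 t^0$ coefficient; and $q_3=(\ell-1,\kappa,0)$ comes analogously from $f^{\Delta_j}$, using that the vertex of $\Delta_j$ on $\{j_1=\ell-1\}$ at $t$-height $\kappa$ survives the substitution because $Z(f^{\Delta_j})$ is transverse to the divisor at $p$, so that $\partial_{\tw}$ (or the appropriate partial) is non-zero there. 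In each case the combinatorial input is that $e$ is a single lattice segment directed by $(0,1)$ adjacent to $\Delta_j$ (on its left) and $\Delta_k$ (on its right), so the nearest lattice points of these two polygons off $e$ are at $j_1=\ell-1$ and $j_1=\ell+1$ respectively.

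Finally, for the concluding sentence I would run the following convexity argument. A facet $F$ of $\wD$ with outer normal $u\in(\bR_{<0})^3$ is a "lower" facet in all three coordinates; its vertices must therefore minimise $\langle u,\cdot\rangle$ over $\wD$, and by (a) any such minimiser lies in the intersection of the three supporting hyperplanes' "active" regions. Points of $\wD$ that can be extreme in a negative-octant direction must have $j_3=-\kappa j_1+\ell\kappa$ \emph{or} $j_2=0$ \emph{or} $j_3=0$ as appropriate, but combining the three defining inequalities $H_1,H_2,H_3$ one checks that the locus in $\wD$ exposed by negative normals is contained in the $2$-dimensional region $\{j_3=0\}\cap\{j_3\geqslant -\kappa j_1+\ell\kappa\}\cap\{j_2\geqslant 0\}$, i.e. the triangle-with-corner bounded by $j_1\geqslant\ell$, $j_2\geqslant 0$, $j_3=0$ near $q_0$, together with the slanted piece along $H_1$. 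Using (b)–(c), the extreme points of $\wD$ available in this region are among $q_1,q_2,q_3$ (since $q_0\notin\wD$), so every negative-normal facet has all its vertices in $\conv(q_0,q_1,q_2,q_3)$; as this set is planar ($j_3=0$) and the three candidate vertices $q_1,q_2,q_3$ together with the missing corner $q_0$ form a quadrilateral, at most two of its edges can be "lower-left" boundary facets of $\wD$, namely the two edges emanating from the missing corner $q_0$. I expect the main obstacle to be the careful bookkeeping in this last step — precisely identifying which faces of the polyhedron can have a strictly negative normal and ruling out spurious ones coming from the slanted hyperplane $\partial H_1$ — together with making the non-degeneracy input in (c) fully rigorous, since one must be sure the binomial expansion $w=\tw+1$ does not accidentally cancel the coefficients attached to $q_2$ and $q_3$.
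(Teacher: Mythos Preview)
Your arguments for (a) and (b) are correct and essentially those of the paper.

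The genuine gap is in (c). You claim that smoothness of $Z(f^{\Delta_k})$ and transversality to the toric divisor dual to $e$ force the coefficient of $z^{\ell+1}\tw^0 t^0$ in $\tilde f_t$ to be non-zero. This is not so. Writing $f^{\Delta_k}/z^\ell = g(w) + z\,h_1(w) + z^2 h_2(w) + \cdots$, the coefficient you need is $h_1(1)$, whereas smoothness at $p$ and transversality to the divisor $\{z=0\}$ are both consequences of $g'(1)\neq 0$ alone (this is exactly the simple-root condition already used in (b)) and put no constraint whatsoever on $h_1(1)$. A concrete non-degenerate example with $h_1(1)=0$ is $f^{\Delta_k}/z^\ell=(w-1)(w-2)+z(w-1)+z^2$. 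What the paper uses instead is the strictly stronger \emph{Log} non-degeneracy hypothesis. If the smallest $m\geqslant 1$ with $(\ell+m,0,0)\in\wD$ satisfies $m>1$, then the local branch of $C_0\cap X_{\Delta_k}$ through $p$ is parametrised as $s\mapsto(s+o(s),\,1+as^m+o(s^m))$, and composing with $\gamma_0$ via \eqref{eq:gammaparam} one sees that $s=0$ is a critical point of $\gamma_0$ restricted to this branch. Hence $p$ would be a Log-inflection point of that component of $C_0$, making $\Cr(C_0)$ singular there, in contradiction with Log non-degeneracy. The symmetric argument on the $\Delta_j$ side yields $q_3\in\wD$. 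This computation with the logarithmic Gauss map is the idea your proposal is missing.

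On the final paragraph: note that $q_3=(\ell-1,0,\kappa)$ (the displayed definition in the paper contains a typographical slip; compare the monomial $cz^{\ell-1}t^\kappa$ used immediately afterwards), so $\conv(q_0,q_1,q_2,q_3)$ is a non-degenerate tetrahedron rather than a planar set, and your argument based on planarity does not apply. With (a)--(c) in hand, the facets of $\wD$ with strictly negative outer normal are those separating $q_0$ from $\wD$ inside this tetrahedron: either the single triangle $\conv(q_1,q_2,q_3)$ or, if an additional vertex $q_4$ of $\wD$ lies in the face $\conv(q_0,q_2,q_3)$, the two triangles $\conv(q_1,q_2,q_4)$ and $\conv(q_1,q_3,q_4)$. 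This step is routine convex geometry once the typo is corrected, and the paper does not spell it out either.
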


\begin{proof}
$(a)$ Since $\wD$ is obtained from the Newton polyhedron $\New(f_t)$ after the change of variable $\tw=w-1$, it suffice to show that $\New(f_t)$ is contained in each of the half spaces $H_1$, $H_2$ and $H_3$. This is clear for $H_2$ and $H_3$ since $f_t$ is a polynomial. The facet of $\New(f_t)$ over $\Delta_j$ lies in $H_1$ according to the above choice of coordinates. It follows from convexity that $\New(f_t)$ is contained in $H_1$.

$(b)$ By hypothesis, the truncation of $f_t$ to $e$ has a simple zero at $w=1$. Equivalently, the truncation of $\tilde f_t$ to the line $\{j_1=\ell, j_3=0\}$ has a simple zero at $\tw=0$. In turn, this is equivalent to say that $\wD$ contains $q_1$ but not $q_0$.

$(c)$ Let us first show that there exists a point $q=(\ell+m,0,0)$, $m\geqslant 1$, in $\wD$. Assume towards the contradiction that the intersection of $\wD$ with the first coordinate axis is empty. Thus, the truncation of $f_t$ to the facet $\Delta_k$ is divisible by $(w-1)$, that is the component of $C_0$ lying in $X_{\Delta_k}$ is reducible and contains the curve $\{w=1\}$ as an irreducible component. It contradicts the fact that $f_t$ is non-degenerate. Therefore, the point $q$ exists. Take now $m$ as small as possible. Our aim is to show that $m=1$. To do so, observe that in a neighbourhood of $p$, the curve $C_0\cap X_{\Delta_k}$ can be parametrised in the coordinate $(z,w)$ by 
$$s\mapsto (s+o(s), 1+as^m+o(s^m))$$
with $a\in\bC^*$. By \eqref{eq:gammaparam}, the composition of $\gamma_0$ with the latter parametrisation is
$$ 
\begin{array}{rl}
    s \mapsto \left[ -  \frac{d}{ds} \log(1+as^m+o(s^m)): \frac{d}{ds} \log(s+o(s)) \right] &  =\left[-  \frac{ams^{m-1}+o(s^{m-1})}{1+as^m+o(s^m)}:\frac{1}{s+o(s)}\right] \\ & \\
     & =\left[ -\frac{ams^{m}+o(s^m)}{1+as^m+o(s^m)}:1\right].
\end{array}
$$
The latter map has a critical point at $s=0$ if and only if $m>1$. Equivalently, the point $p$ is a ramification point of the restriction of $\gamma_0$ to $C_0\cap X_{\Delta_k}$ if and only if $m>1$. The point $p$ cannot be such a ramification point since $f_t$ is Log non-degenerate. We conclude that $m=1$ and that $q=q_2$. Since $\Delta_k$ and $\Delta_j$ play a symmetric role, the same arguments apply to $q_3$.
\end{proof}

We now define the polynomial $\emph{\widetilde{N}(z,\tw,t)}:=N(z,\tw+1,t)=N(z,w,t)$ in the variable $(z,\tw,t)$. Since we aim to apply the Newton-Puiseux Theorem to the system $\{\tilde{f}_t(z,\tw)=\widetilde{N}(z,\tw,t)=0\}$, we need now to gather some information on $\wN$.
Below, we denote by $\emph{a}$, $\emph{b}$ and $\emph{c}$ the coefficients such that $az^{\ell+1}$, $bz^\ell\tw$ and $cz^{\ell-1}t^\kappa$ are monomials of $\tilde f_t$. Since those monomials correspond respectively to the lattice point $q_2$, $q_1$ and $q_3$, we know by Lemma \ref{lem:newton} that $abc\in \bC^\ast$. Furthermore, denote $\emph{\delta}:=\min\big\lbrace\kappa/2, \{d\in\bN \vert \frac{\partial^{\ell+d}}{\partial z^{\ell}\partial t^d}\tft(0,0,0)\neq 0\}\big\rbrace$. Then, the polyhedron $\wD$ has exactly two facets whose outer normal vector lies in $(\bR_{<0})^3$ if and only if $\delta<\kappa/2$. If $\delta = \{d\in\bN \vert \frac{\partial^{\ell+d}}{\partial z^{\ell}\partial t^d}\tft(0,0,0)\neq 0\}$, we denote by $\emph{d}$ the coefficient of the monomial $z^\ell t^\delta$ of $\tft$ and declare $d=0$ otherwise. In particular, we have $d=0$ if $\delta=\kappa/2$ and $\kappa$ is odd. We refer to Figure \ref{fig:newton}.

Recall the terminology of Section \ref{sec:npt}. We say that a polynomial $g\in\bC[z,\tw,t]$ is $\emph{o_v(d)}$ if its initial $v$-order is strictly larger than $d$. Plainly, we have the properties
\[
o_v(d_1)+o_v(d_2)= o_v(\min \{d_1,d_2\}) \quad \text{ and } \quad g \cdot o_v(d) = o_v(d+d')
\]
where $d'$ is the initial $v$-order of $g$.

\begin{Lemma}\label{lem:vorder}
Assume that $f_t$ is as in Theorem \ref{thm:asymptotic} and define $v:=(\kappa,2\delta,2)$. Then, there exists a non-zero polynomial $h$ such that  
\begin{equation}\label{eq:zN}
   z\cdot \wN +h\cdot \tft  \;  = \; -(c z^{\ell-1}t^\kappa+a z^{\ell+1}) b^2 z^{2\ell} + o_v(\kappa(3\ell+1)).
\end{equation}
Moreover, the polynomial $\wN$ contains the monomial $\ell^2b^3z^{3\ell-1}\tw$ and contains additionally the monomial $\ell^2b^2 d z^{3\ell-1}t^\delta$ if $d$ is non-zero.
\end{Lemma}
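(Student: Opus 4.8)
The plan is to compute the numerator $\wN$ of $P(z,w,t)$ explicitly enough to control its lowest-order behaviour with respect to the weight $v=(\kappa,2\delta,2)$. First I would recall that $P=\det\begin{pmatrix}\partial_z f_t & \partial_z\gamma_t\\ \partial_w f_t & \partial_w\gamma_t\end{pmatrix}$ where $\gamma_t=[z\partial_z f_t : w\partial_w f_t]$; writing $\gamma_t$ in the affine chart as the ratio $g/h$ with $g=w\partial_w f_t$, $h=z\partial_z f_t$ (or its reciprocal, depending on which chart contains $\gamma_0(p)$), one gets $\partial_z\gamma_t=(h\partial_z g - g\partial_z h)/h^2$ and similarly for $\partial_w$. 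Hence $P$ has denominator $h^2=(z\partial_z f_t)^2$ and numerator
\[
N \;=\; \partial_z f_t\,\big(h\partial_z g-g\partial_z h\big)\;-\;\partial_w f_t\,\big(\text{analogous term}\big),
\]
which after expanding and using $f_t=0$ on $C_t$ — i.e. working modulo the ideal $(\tft)$, which is exactly why the statement allows the correction term $h\cdot\tft$ — collapses to a manageable polynomial. The substitution $\tw=w-1$ then turns $w$-derivatives into $\tw$-derivatives and the monomials $q_1,q_2,q_3$ of Lemma \ref{lem:newton} become $bz^\ell\tw$, $az^{\ell+1}$, $cz^{\ell-1}t^\kappa$ respectively.

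The core computation is to identify, among all monomials of $z\cdot\wN+h\cdot\tft$, those of minimal $v$-order $\kappa(3\ell+1)$. I would organise this by writing $\tft = bz^\ell\tw + az^{\ell+1}+cz^{\ell-1}t^\kappa + dz^\ell t^\delta + (\text{higher }v\text{-order terms})$, so that the three or four listed monomials carry the entire $v$-order up to $\kappa\ell$ (for the $z^\ell$-block) or $\kappa\ell+\min\{2,\ldots\}$, and every remaining monomial of $\tft$ is $o_v(\cdot)$ by Lemma \ref{lem:newton}(a)–(c). Plugging this truncation into the determinant and tracking only the terms of $v$-order at most $\kappa(3\ell+1)$, I expect the leading part of $z\wN$ (after cancelling $h\cdot\tft$, whose role is precisely to kill the terms that are divisible by $\tft$ and would otherwise pollute the count) to be $-(cz^{\ell-1}t^\kappa+az^{\ell+1})\,b^2z^{2\ell}$: the factor $b^2z^{2\ell}$ comes from the square of the denominator-type contribution $z\partial_z f_t\sim \ell b z^\ell\tw$ combined with a $\tw$-derivative that eliminates the $\tw$, while the factor $cz^{\ell-1}t^\kappa+az^{\ell+1}$ is the piece of $\partial_z f_t$ — morally $\partial_z(\tft)$ restricted to its $t^\kappa$ and $z^{\ell+1}$ monomials — that survives after the $f_t=0$ reduction. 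This gives \eqref{eq:zN}.

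For the second assertion, the monomial $\ell^2 b^3 z^{3\ell-1}\tw$ and, when $d\neq0$, the monomial $\ell^2 b^2 d z^{3\ell-1}t^\delta$ in $\wN$ itself (not merely in $z\wN+h\tft$): I would extract the coefficient of $z^{3\ell-1}\tw$ and of $z^{3\ell-1}t^\delta$ directly from the determinant formula. Since $z\partial_z f_t$ contributes $\ell b z^\ell\tw$ and $w\partial_w f_t$ contributes $bz^\ell\tw\cdot(\cdot)$ etc., the relevant product of three "$bz^\ell$"-type factors — one from each of $\partial_z f_t$, and the two factors inside $h\partial_z g-g\partial_z h$ — produces the coefficient $\ell^2 b^3$ on $z^{3\ell-1}\tw$; replacing one $bz^\ell\tw$ factor by the $dz^\ell t^\delta$ monomial (possible only when $\delta<\kappa/2$, i.e. $d\neq0$) yields $\ell^2 b^2 d z^{3\ell-1}t^\delta$ by the same bookkeeping. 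These monomials are not cancelled by $h\cdot\tft$ because $h\cdot\tft$ is divisible by $\tft$ and, by Lemma \ref{lem:newton}(b), $\tft$ has no constant or $z$-only term of low degree that could generate $z^{3\ell-1}\tw$ after dividing by $z$.

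The main obstacle, as I see it, is the bookkeeping: keeping precise track of which monomials of the fully-expanded determinant have $v$-order exactly $\kappa(3\ell+1)$ versus strictly larger, and verifying that all the "cross terms" one a priori worries about are either genuinely $o_v(\kappa(3\ell+1))$ by the polyhedral constraints of Lemma \ref{lem:newton}, or get absorbed into the correction $h\cdot\tft$. The choice of $h$ is not canonical — any polynomial making the left side land in the ideal appropriately works — so I would construct $h$ by first reducing $z\wN$ modulo $\tft$ (polynomial division in, say, the $\tw$ variable), then check the $v$-order of the remainder; the existence of a suitable $h$ is then automatic and non-zero because $z\wN$ is genuinely not a multiple of $\tft$. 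The refinements involving $\delta$ and $d$ — distinguishing the cases $\delta<\kappa/2$ and $\delta=\kappa/2$ — require care but follow the same pattern, using that $d=0$ forces the $z^{3\ell-1}t^\delta$ monomial to vanish identically so nothing needs to be said.
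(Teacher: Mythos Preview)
Your overall strategy --- compute $z\wN$ modulo $\tft$ and isolate the terms of minimal $v$-order --- matches the paper's. But two of the mechanisms you describe are wrong, and the device that makes the computation tractable is absent from your sketch.

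The factor $b^2z^{2\ell}$ does \emph{not} come from a square of $z\partial_z f_t\sim\ell bz^\ell\tw$ (that would give $\ell^2b^2z^{2\ell}\tw^2$). It comes from $(\partial_{\tw}\tft)^2=(bz^\ell+o_v(\kappa\ell))^2$. More importantly, the paper does not ``plug the truncation into the determinant and track terms'': it first reduces the \emph{building blocks} modulo $\tft$. Writing $\tft=cz^{\ell-1}t^\kappa+Q(\tw,t)z^\ell+az^{\ell+1}+o_v(\kappa(\ell+1))$, one has
\[
z\partial_z\tft \;=\; \ell\,\tft + \big(-cz^{\ell-1}t^\kappa+az^{\ell+1}\big)+o_v(\kappa(\ell+1)),
\]
so $z\partial_z\tft\equiv -cz^{\ell-1}t^\kappa+az^{\ell+1}$ modulo $\tft$ and $o_v$; three analogous identities for $z\partial_z(z\partial_z\tft)$, $\partial_{\tw}(z\partial_z\tft)$ and $z\partial_z\big((\tw+1)\partial_{\tw}\tft\big)$ are recorded first. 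Feeding these into $z\wN=z\partial_z\tft\cdot\widetilde\Gamma_w-z\partial_{\tw}\tft\cdot\widetilde\Gamma_z$ collapses both pieces to multiples of $(\tw+1)(\partial_{\tw}\tft)^2$, and their difference gives $-(cz^{\ell-1}t^\kappa+az^{\ell+1})(\tw+1)(\partial_{\tw}\tft)^2$ directly. The polynomial $h$ is simply the accumulated multiples of $\tft$ subtracted along the way. Without this preprocessing, the bookkeeping you flag as the main obstacle really is an obstacle.

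For the second assertion your mechanism is also off. The paper extracts the $\tw$-monomial of $\wN$ by switching to a \emph{different} weight $v'=(\kappa,1,2)$ (note $2\delta>1$), obtaining $\wN=\ell^2b^3z^{3\ell-1}\tw+o_{v'}(\kappa(3\ell-1)+1)$. Comparing with \eqref{eq:zN} then forces $z^{-1}h$ to contain $\ell^2b^2z^{2\ell-1}$; this is how $h\neq0$ is established, not via ``$z\wN$ is not a multiple of $\tft$''. Finally, the monomial $\ell^2b^2dz^{3\ell-1}t^\delta$ in $\wN$ is obtained by multiplying this piece of $z^{-1}h$ with the monomial $dz^\ell t^\delta$ of $\tft$ and reading it back through the relation $z\wN+h\tft=\text{RHS}$. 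It does not arise from ``replacing one $bz^\ell\tw$ factor by $dz^\ell t^\delta$'' in the determinant itself.
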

\begin{figure}[t]
    \centering
    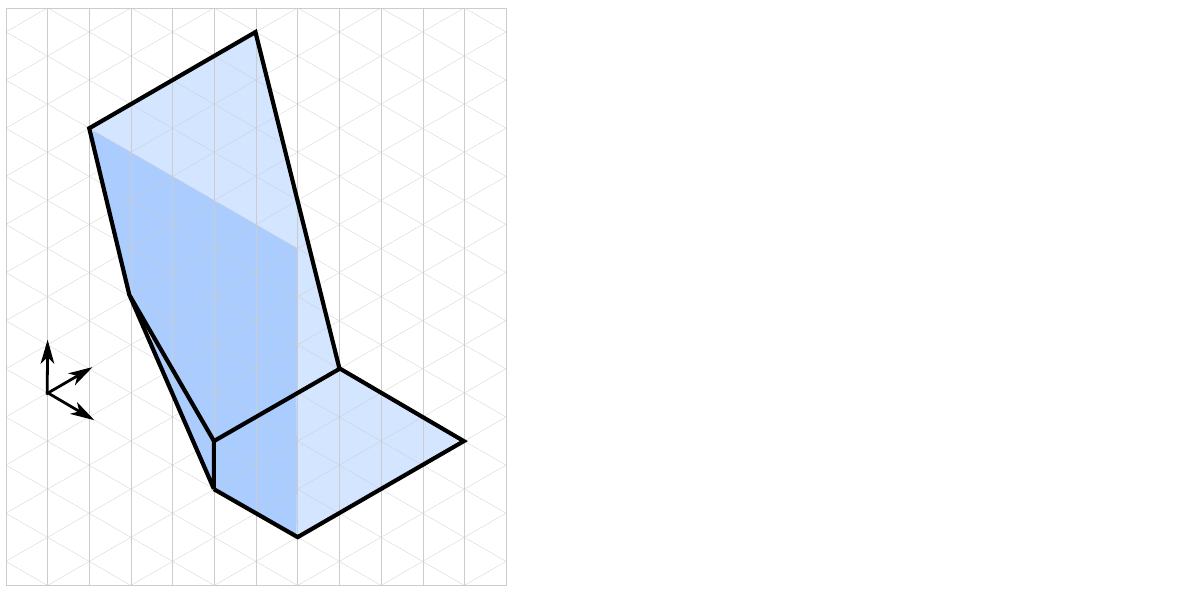
    \caption{The polyhedron $\wD$ for $\kappa=3$ with $\delta=3/2$ (left) and $\delta=1$ (right).}
    \label{fig:newton}
\end{figure}

\begin{proof}
Recall that in the affine chart $\left[u:v\right]\mapsto u/v$ of $\bC P^1$, the map $\gamma_t$ is given by 
\[
\gamma_t(z,w)=\frac{z\cdot \partial_z f_t(z,w)}{w\cdot \partial_w f_t(z,w)}.
\]
Denote $\Gamma_z$ and $\Gamma_w$ the respective numerators of the rational functions $\partial_z \gamma_t$ and $\partial_w \gamma_t$, that is 
$$\Gamma_z:=\partial_z(z\partial_z f_t)\cdot w\partial_w f_t - z\partial_z f_t\cdot \partial_z\big(w\partial_w f_t\big)$$ and 
$$\Gamma_w:=\partial_w(z\partial_z f_t)\cdot w\partial_w f_t - z\partial_z f_t\cdot \partial_w\big(w\partial_w f_t\big).$$
In turn, we have that the numerator $N$ of $P$ is given by 
$$N=\partial_z f_t\cdot \Gamma_w -  \partial_w f_t \cdot \Gamma_z.$$
If we rewrite $\Gamma_z$ and $\Gamma_w$ in the variables $(z,\tw,t)$ as we did for $N$, that is 
$$ \widetilde \Gamma_z(z,\tw,t):= \Gamma_z(z,\tw+1,t) \quad \text{and}\quad  \widetilde \Gamma_w(z,\tw,t):= \Gamma_w(z,\tw+1,t),$$
then we obtain the formulas
$$\widetilde \Gamma_z:=\partial_z(z\partial_z \tilde f_t)\cdot (\tw+1)\partial_{\tw} \tilde f_t - z\partial_z \tilde f_t\cdot \partial_z\big((\tw+1)\partial_{\tw} \tilde f_t\big),$$ 
$$\widetilde\Gamma_w:=\partial_{\tw}(z\partial_z \tilde f_t)\cdot (\tw+1)\partial_{\tw} \tilde f_t - z\partial_z \tilde f_t\cdot \partial_{\tw}\big((\tw+1)\partial_{\tw} \tilde f_t\big)$$
and $$\wN:=\partial_z \tilde f_t\cdot \widetilde \Gamma_w -  \partial_{\tw} \tilde f_t \cdot \widetilde \Gamma_z.$$
Below, we will compute $z\cdot \widetilde N$ modulo $\tft$. To do so, let us first observe that we can write 
\[\tilde f_t=c z^{\ell-1}t^\kappa+Q(\tw,t) z^{\ell}+a z^{\ell+1}+ o_v(\kappa(\ell+1))\]
where $Q\in \bC\left[\tw,t\right]$. This is a consequence of Lemma \ref{lem:newton}. Recall that $\ell\geqslant 2$ by assumption. Then, we have that
\begin{equation} \label{eq:vorder}
\begin{array}{rl}
    z\partial_z \tft & = \;  c (\ell-1) z^{\ell-1}t^\kappa+\ell Q(\tw,t)z^{\ell}+a (\ell+1) z^{\ell+1} + o_v(\kappa(\ell+1)) \\ & \\
     & \equiv \; -c z^{\ell-1}t^\kappa+a z^{\ell+1} + o_v(\kappa(\ell+1)) \mod \tilde f_t, 
\end{array}
\end{equation}
that
\begin{equation} \label{eq:vorder2}
      z\partial_z(z\partial_z \tilde f_t) \; \equiv \; c (1-2\ell)z^{\ell-1}t^\kappa+a (2\ell+1)z^{\ell+1} + o_v(\kappa(\ell+1)) \mod \tilde f_t,
\end{equation}
\begin{equation} \label{eq:vorder3}
      \partial_{\tw}( z\partial_{z}\tft )\; = \;  \ell \partial_{\tw}\tft + o_v(\kappa(\ell+1)-2\delta)
\end{equation}
and eventually that
\begin{equation} \label{eq:vorder4}
      z\partial_z( (\tw+1)\partial_{\tw}\tilde f_t )\; = \;  \ell (\tw+1)\partial_{\tw}\tilde f_t + o_v(\kappa(\ell+1)-2\delta).
\end{equation}
In order to compute $z\cdot \wN$, we compute separately $z\partial_z \tft\cdot \widetilde \Gamma_w$ and   $z \partial_{\tw} \tft \cdot \widetilde \Gamma_z$. Using \eqref{eq:vorder} and \eqref{eq:vorder3}, we obtain
\[
\begin{array}{rl}
    z\partial_z \tft\cdot \widetilde \Gamma_w & = \;  {\color{BlueViolet}z\partial_z \tft} \big( {\color{Emerald}\partial_{\tw}(z\partial_z \tft)}\cdot (\tw+1)\partial_{\tw} \tilde f_t - {\color{Peach}z\partial_z \tft \cdot \partial_{\tw}\big((\tw+1)\partial_{\tw} \tft\big)} \big)\\
     & \\
     & \equiv \; {\color{BlueViolet}\big(-c z^{\ell-1}t^\kappa+a z^{\ell+1} + o_v(\kappa(\ell+1))\big)}\big({\color{Emerald}(\ell \partial_{\tw}\tft + o_v(\kappa(\ell+1)-2\delta))}(\tw+1)\partial_{\tw} \tft \\ & \\ & \quad - {\color{Peach}\big(-c z^{\ell-1}t^\kappa+a z^{\ell+1} + o_v(\kappa(\ell+1))\big) \partial_{\tw}\big((\tw+1)\partial_{\tw} \tft\big)}
     \big) \\ & \\
     & \equiv \; (-c z^{\ell-1}t^\kappa+a z^{\ell+1}) \ell (\tw+1)(\partial_{\tw} \tft)^2 + o_v(\kappa(3\ell+1)) \mod \tft.
\end{array}
\]
Observe that the product of the orange term with ${\color{BlueViolet}z\partial_z \tft}$ has initial $v$-order $\kappa(3\ell+2)$ at least. Hence, it is $o_v(\kappa(3\ell+1))$ and does not contribute to the last formula obtained above. Using \eqref{eq:vorder}, \eqref{eq:vorder2} and \eqref{eq:vorder4}, we obtain
\[
\begin{array}{rl}
    z \partial_{\tw} \tft \cdot \widetilde \Gamma_z & = \;  \partial_{\tw} \tft \big({\color{BlueViolet}z\partial_z(z\partial_z \tft)}\cdot (\tw+1)\partial_{\tw} \tft - {\color{Emerald}z\partial_z \tft}\cdot {\color{Peach}z\partial_z\big((\tw+1)\partial_{\tw} \tft\big)}\big) \\ & \\
     & \equiv \; \partial_{\tw} \tft \big({\color{BlueViolet}(c (1-2\ell)z^{\ell-1}t^\kappa+a (2\ell+1)z^{\ell+1} + o_v(\kappa(\ell+1)))}\cdot (\tw+1)\partial_{\tw} \tft \\ & \\ & \quad
      - {\color{Emerald}(-c z^{\ell-1}t^\kappa+a z^{\ell+1} + o_v(\kappa(\ell+1)))}\cdot {\color{Peach}(\ell (\tw+1)\partial_{\tw}\tft + o_v(\kappa(\ell+1)-2\delta))}\big)\\ & \\
     & \equiv \; (c(1-\ell)z^{\ell-1}t^\kappa+a(\ell+1) z^{\ell+1}) \ell (\tw+1)(\partial_{\tw} \tft)^2 + o_v(\kappa(3\ell+1)) \mod \tft.
\end{array}
\]
Eventually, since $\partial_{\tw} \tft=bz^{\ell}+o_v(\kappa\ell)=(\tw+1)(\partial_{\tw} \tft)$, we conclude that 
\[
\begin{array}{rl}
    z\cdot \wN  &  \equiv \; -(c z^{\ell-1}t^\kappa+a z^{\ell+1}) (\tw+1)(\partial_{\tw} \tft)^2 + o_v(\kappa(3\ell+1)) \mod \tft \\ & \\
     & \equiv \; -(c z^{\ell-1}t^\kappa+a z^{\ell+1}) b^2 z^{2\ell} + o_v(\kappa(3\ell+1)) \mod \tft.
\end{array}
\]

For the second part of the statement, we compute $\wN$ up to $o_{v'}(\kappa(3\ell-1)+1)$ with $v':=(\kappa,1,2)$. Since $2\delta>1$, we have that $\partial_z \tft = \ell b z^{\ell-1}\tw+ o_{v'}(\kappa(\ell-1)+1)$ and $\partial_{\tw} \tft = b z^{\ell-1}+ o_{v'}(\kappa(\ell-1))$. In turn, we obtain that
\[
\begin{array}{rl}
    \wN & = \; \ell b z^{\ell-1}\tw \big((\ell b z^\ell)(b z^\ell) - {\color{Emerald}(\ell b z^{\ell}\tw)\partial_{\tw}((\tw+1)\partial_{\tw}\tft)}\big)\\ & \\
     & \quad - b z^\ell {\color{Peach}\big((\ell^2 b z^{\ell-1}\tw)(b z^\ell)-(\ell b z^\ell \tw)(\ell b z^{\ell-1})\big)} + o_{v'}(\kappa(3\ell-1)+1)\\ & \\
     & = \ell^2 b^3 z^{3\ell-1}\tw+ o_{v'}(\kappa(3\ell-1)+1)\; 
\end{array}
\]
where the last equality comes from the fact that the terms in orange cancel each other and that the final contribution of the green term is $o_{v'}(\kappa(3\ell-1)+1)$. This implies that $\wN$ contains the monomial $\ell^2b^3z^{3\ell-1}\tw$ and in turn that $z^{-1}\cdot h$ contains the monomial $\ell^2 b^2 z^{2\ell-1}$. In particular, the polynomial $h$ is non-zero. The multiplication of $\ell^2 b^2 z^{2\ell-1}$ with the monomial $d z^\ell t^\delta$ of $\tft$ leads to the monomial $\ell^2 b^2 z^{3\ell-1}t^\delta$ of $\wN$.
\end{proof}

\begin{Corollary}\label{cor:newton}
The polyhedron $\wN$ is contained in the translation by $(2\ell-1,0,0)$ of each of the half spaces $H_1$, $H_2$ and $H_3$. The only facets of $\wN$ whose outer normal vector lies in $(\bR_{<0})^3$ are obtained by translation by $(2\ell-1,0,0)$ from the facets of $\wD$ having the same property.
\end{Corollary}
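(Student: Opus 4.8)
\noindent The plan is to derive the two assertions from Lemmas \ref{lem:newton} and \ref{lem:vorder} by transporting Newton data through the formula $\wN=\partial_z\tft\cdot\widetilde\Gamma_w-\partial_{\tw}\tft\cdot\widetilde\Gamma_z$ and through the identity \eqref{eq:zN}. For a primitive $w\in(\bZ_{\geqslant 0})^3\setminus\{0\}$, write $\ord_w(g)$ for the initial $w$-order of a nonzero polynomial $g$ (in the sense of Section \ref{sec:npt}); recall that $\ord_w$ is additive under products, that $\ord_w(\partial_z g)\geqslant\ord_w(g)-w_1$ and $\ord_w(\partial_{\tw}g)\geqslant\ord_w(g)-w_2$, and that multiplying by $z$, by $\tw$, or by $\tw+1$ shifts $\ord_w$ by $+w_1$, $+w_2$, $0$ respectively. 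In these terms the three containments are lower bounds on $\ord_w(\wN)$ for the three half-space normals, and the statement about facets is the sharpness of such bounds for the (at most two) $w$ which are minus the outer normals of the negative-octant facets of $\wD$.

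\emph{The containments.} Since $\wN$ is a genuine polynomial in $z,\tw,t$, its Newton polyhedron lies in $\{j_2\geqslant 0\}\cap\{j_3\geqslant 0\}$, which equals $\big(H_2+(2\ell-1,0,0)\big)\cap\big(H_3+(2\ell-1,0,0)\big)$ because the translation vector has vanishing second and third coordinates; this disposes of $H_2$ and $H_3$. For $H_1$, put $v_0:=(\kappa,0,1)$, so that $H_1=\{\kappa j_1+j_3\geqslant\ell\kappa\}$ and Lemma \ref{lem:newton}(a) reads $\ord_{v_0}(\tft)\geqslant\ell\kappa$. Propagating this bound through the derivatives occurring in $\widetilde\Gamma_z$ and $\widetilde\Gamma_w$ --- for instance $\ord_{v_0}(\partial_z\tft)\geqslant(\ell-1)\kappa$, $\ord_{v_0}(z\partial_z\tft)\geqslant\ell\kappa$, $\ord_{v_0}\big((\tw+1)\partial_{\tw}\tft\big)\geqslant\ell\kappa$ and $\ord_{v_0}\big(\partial_z((\tw+1)\partial_{\tw}\tft)\big)\geqslant(\ell-1)\kappa$ --- yields $\ord_{v_0}(\widetilde\Gamma_z)\geqslant(2\ell-1)\kappa$ and $\ord_{v_0}(\widetilde\Gamma_w)\geqslant 2\ell\kappa$, hence $\ord_{v_0}(\wN)\geqslant(3\ell-1)\kappa$. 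This is exactly $\New(\wN)\subset\{\kappa j_1+j_3\geqslant(3\ell-1)\kappa\}=H_1+(2\ell-1,0,0)$.

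\emph{The negative-octant facets.} Set $\tau:=(2\ell-1,0,0)$. By Lemma \ref{lem:newton} and the discussion preceding Lemma \ref{lem:vorder}, when $\delta=\kappa/2$ the polyhedron $\wD$ has a single negative-octant facet, carried by $q_1,q_2,q_3$, of outer normal $\propto -(\kappa,2\delta,2)$; when $\delta<\kappa/2$ it has exactly two, carried by $\{q_1,q_2,q'\}$ and $\{q_1,q_3,q'\}$ with $q':=(\ell,0,\delta)$, of outer normals $\propto -(\delta,\delta,1)$ and $\propto -(\kappa-\delta,\delta,1)$. For the inclusion ``$\subset$'', I rerun the bookkeeping above with $w$ equal to each of these facet normals --- now using that $\wD$ lies in the corresponding supporting half-space, so that $\ord_w(\tft)$ equals the value of $w$ on that facet ---, obtaining in each of the three cases $\ord_w(\wN)\geqslant\ord_w(\tft)+\langle w,\tau\rangle$; together with the containments, $\New(\wN)$ then lies on the inner side of the $\tau$-translate of every negative-octant facet hyperplane of $\wD$. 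For the reverse inclusion, I produce a nonzero monomial of $\wN$ at each translated vertex $q_1+\tau$, $q_2+\tau$, $q_3+\tau$ (and $q'+\tau$ when $d\neq 0$): the monomials at $q_1+\tau$ (coefficient $\ell^2b^3$) and at $q'+\tau$ (coefficient $\ell^2b^2d$) are furnished by Lemma \ref{lem:vorder}, while those at $q_2+\tau=(3\ell,0,0)$ and at $q_3+\tau=(3\ell-2,0,\kappa)$ are read off \eqref{eq:zN} by computing the coefficients of $z^{3\ell+1}$ and of $z^{3\ell-1}t^\kappa$ in $z\cdot\wN$: both of these monomials have $v$-order exactly $\kappa(3\ell+1)$, so the $o_v$-remainder in \eqref{eq:zN} does not reach them, and subtracting the contribution of $h\cdot\tft$ leaves the coefficients $-ab^2(1+\ell^2)$ and $-cb^2(1+\ell^2)$, both nonzero. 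By convexity, $\New(\wN)$ then contains $\conv\{q_1+\tau,q_2+\tau,q_3+\tau,q'+\tau\}$, hence each translated facet in full; combined with the inclusion ``$\subset$'', the negative-octant facets of $\New(\wN)$ are precisely the $\tau$-translates of those of $\wD$.

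\emph{The main difficulty} is the last computation: extracting the coefficients of $z^{3\ell+1}$ and $z^{3\ell-1}t^\kappa$ of $z\cdot\wN$ from \eqref{eq:zN} requires pinning down the contribution of $h\cdot\tft$ there, hence controlling the ``lower'' part of the Newton polyhedron of the auxiliary polynomial $h$ --- concretely, that $\ord_{v_0}(h)\geqslant 2\ell\kappa$ and that $\ini_{v_0}(h)$ is the monomial $\ell^2b^2z^{2\ell}$ (a priori $h$ might carry monomials of smaller $v_0$-order which, multiplied by $\tft$, would land strictly below the expected lower hull of $\wN$). I would settle this by revisiting the construction of $h$ in the proof of Lemma \ref{lem:vorder}: the quotients produced by the reductions \eqref{eq:vorder}--\eqref{eq:vorder4} are constant multiples of $\widetilde\Gamma_w$, of $\widetilde\Gamma_z$ and of $(\tw+1)\partial_{\tw}\tft$, each of $v_0$-order at least $2\ell\kappa$, and the two mixed contributions cancel at leading order (exactly as the orange terms cancel in that proof), so that $h=\ell\,\widetilde\Gamma_w$ plus terms of $v_0$-order $>2\ell\kappa$, with leading term $\ell^2b^2z^{2\ell}$. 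An alternative bypassing $h$ altogether is to compute $\ini_w\wN$ directly from $\wN=\partial_z\tft\cdot\widetilde\Gamma_w-\partial_{\tw}\tft\cdot\widetilde\Gamma_z$ for each facet normal $w$, exactly as in the second half of the proof of Lemma \ref{lem:vorder} (for $v'$); this is more computational but routine once the relevant truncations of $\tft$ are written out. Everything else in the argument is formal bookkeeping.
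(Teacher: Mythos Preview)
Your argument follows the same route as the paper --- both assertions are extracted from Lemmas \ref{lem:newton} and \ref{lem:vorder} together with the monomial $\ell^2 b^2 z^{2\ell}$ of $h$ --- but where the paper offers only a one-line justification, you spell out the $\ord_{v_0}$-bookkeeping for the $H_1$-containment and the vertex-by-vertex check for the facet statement. This is an appropriate expansion of what the paper leaves implicit.

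One caution about the step you flag as the main difficulty. To read the coefficients of $z^{3\ell+1}$ and $z^{3\ell-1}t^\kappa$ in $z\cdot\wN$ from \eqref{eq:zN} you need not only that $h$ contains $\ell^2 b^2 z^{2\ell}$ (which is exactly the fact the paper records) but that $h|_{\tw=t=0}$ has no monomial $z^k$ with $k<2\ell$; otherwise further pairs $(z^k,z^{3\ell+1-k})$ from $h\cdot\tft$ contaminate your count and the values $-ab^2(1+\ell^2)$, $-cb^2(1+\ell^2)$ are not justified. Your first proposed resolution --- reconstructing $h$ from the reductions \eqref{eq:vorder}--\eqref{eq:vorder4} --- is delicate: that proof is asymptotic in $v=(\kappa,2\delta,2)$, not in $v_0=(\kappa,0,1)$, and the quotients are only pinned down up to $o_v$, so the claim ``$h=\ell\,\widetilde\Gamma_w+(\text{$v_0$-higher})$'' needs more care than stated (note also that $\ord_{v_0}(\widetilde\Gamma_z)\geq(2\ell-1)\kappa$, not $2\ell\kappa$, so a naive bound on the $\widetilde\Gamma_z$-contribution to $h$ is one step short). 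Your second resolution --- computing $\ini_w\wN$ directly from $\wN=\partial_z\tft\cdot\widetilde\Gamma_w-\partial_{\tw}\tft\cdot\widetilde\Gamma_z$ for each facet normal $w$, exactly as the $v'$-computation at the end of the proof of Lemma \ref{lem:vorder} --- is the cleaner and fully rigorous way through, and bypasses $h$ entirely. With that in place, the rest of your argument (the half-space containments plus the presence of the translated vertices forcing the negative-octant facets to be exactly $F_i+\tau$) is complete.
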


\begin{proof}
This follows from the Lemmas \ref{lem:newton} and \ref{lem:vorder} and the fact that $z^{-1}\cdot h$ contains the monomial $\ell^2 b^2 z^{2\ell-1}$.
\end{proof}

\begin{proof}[Proof of Theorem \ref{thm:asymptotic}] In order to prove the theorem, we will find a parametrisation of the branches of the curve $\mathcal{C}:=\{\tilde f_t=\widetilde N=0\}$ that pass through $0\in \bC^3$.

Let us first consider the case when $\delta< \kappa/2$, that is, there exists an extra vertex $q_4$ of $\wD$ in $\conv(q_0,q_2,q_3)$. According to Lemma \ref{lem:newton}, the polyhedron $\wD$ has exactly two facets whose outer normal vector lies in $(\bR_{<0})^3$, namely $F_1:=\conv(q_1,q_2,q_4)$ and $F_2:=\conv(q_1,q_3,q_4)$ with respective normal vectors $-(\delta,\delta,1)$ and $-(\kappa-\delta,\delta,1)$. By Corollary \ref{cor:newton}, the polyhedron $\New(\widetilde N)$ contains the facets $F_1':=F_1+(2\ell-1,0,0)$ and $F_2':=F_2+(2\ell-1,0,0)$. Consider now the partial toric compactification  $X_D \supset (\bC^\ast)^3$ given by the non-compact polyhedron $D$ that is the positive cone over $q_0$ of the facets $F_1$ and $F_2$. The latter are the only compact facets of $D$ and the remaining 3 facets are supported on $H_1$, $H_2$ and $H_3$. Therefore, the branches of $\mathcal{C}$ that pass through $0\in \bC^3$ are exactly the branches whose closure in $X_D$ intersects the union of toric divisors  $X_{F_1}\cup X_{F_2}\subset X_D$.

The intersection of $\mathcal{C}$ with $X_{F_1}$ (respectively $X_{F_2}$) is given by $S_1:=\{\tilde f_t^{F_1}=\widetilde N^{F_1'}\}$ (respectively $S_2:=\{\tilde f_t^{F_2}=\widetilde N^{F_2'}\}$). By the Bernstein-Kushnirenko Theorem, both $S_1$ and $S_2$ consists of a single point. By Lemma 
\ref{lem:vorder}, we have that $\tilde f_t^{F_1\cap F_2}=b^2\ell^2 z^{2\ell-1} \cdot \widetilde N^{F_1'\cap F_2'}$. Therefore, we have that $S_1=S_2:=S$ and that $S \in X_{F_1}\cap X_{F_2}$. 

By the Newton-Puiseux Theorem \cite[Theorem, Section 3]{Mau}, we can parametrise each branch of $\mathcal{C}$ passing through $S$ by an auxiliary parameter $s$ as follows 
\begin{equation}\label{eq:paramsolevenodd}
 s\mapsto \big(\alpha s^{v_1}+ o(s^{v_1}),\beta s^{v_2}+ o(s^{v_2}), s^{v_3}\big)
\end{equation}
where $\alpha, \beta \in \bC^*$ and $(v_1,v_2,v_3)\in \bN^3$ is primitive. We claim that there are exactly two such branches with $(v_1,v_2,v_3)=(\kappa/2,\delta,1)$ if $\kappa$ is even and that there is exactly one such branch with $(v_1,v_2,v_3)=(\kappa,2\delta,2)$ if $\kappa$ is odd.
To see this, let us perturb slightly $\mathcal{C}$ into a curve $\mathcal{C}'$ such that the corresponding points $S_1'$ and $S_2'$ becomes distinct points living in the respective tori $\ttor$ of $X_{F_1}$ and $X_{F_2}$. In this case, each of $S_1'$ and $S_2'$ corresponds to a single branch of $\mathcal{C}'$ parametrised respectively by 
$$
    \big(\alpha_1 s^{\delta}+ o(s^{\delta}),\beta_1 s^{\delta}+ o(s^{\delta}), s\big) \quad \text{and} \quad \big(\alpha_2 s^{\kappa-\delta}+ o(s^{\kappa-\delta}),\beta_2 s^{\delta}+ o(s^{\delta}), s\big)
$$
where the vectors of exponents are the inner normal to the corresponding facets, see for instance \cite[Section 2]{Till}. Thus, the sum of vector of exponents of over all the  branches of $\mathcal C$ passing through $S$ should be equal to the sum of the above vectors of exponents, namely $(\kappa,2\delta,2)$. Indeed, each coordinate of the sum is the total intersection number of $\mathcal{C}$ with a given coordinate hyperplane in a neighbourhood of $0\in \bC^3$ and is therefore invariant by small perturbation of $\mathcal{C}$. Since the vector of exponent of a single branch of $\mathcal{C}$ has to be primitive, there is a single branch with exponent vector $(\kappa,2\delta,2)$ if $\kappa$ is odd and two branches with exponent vector $(\kappa/2,\delta,1)$ otherwise. The claim follows.

Whether $\kappa$ is odd or even, the composition of the parametrisation \eqref{eq:paramsolevenodd} with $\tft$ has to be identically zero as a power series in the variable $s$. This applies to the coefficient of the monomial with lowest degree, coming from $bz^\ell\tw+d t^\delta$, that is $\alpha^\ell(b\beta+1)$. We deduce that $\beta=-1/d$. Applying the same reasoning to the polynomial $z\cdot \wN+ h\cdot \tft$ of Lemma \ref{lem:vorder}, we deduce that
\begin{equation}\label{eq:alpha}
    c+a\alpha^2=0.
\end{equation}
We now compute the composition of $\gamma_t(z,w)=\gamma_t(z,\tw+1))=\frac{z\partial_z \tilde f_t(z,\tw)}{(\tw+1)\partial_{\tw}\tilde f_t(z,\tw)}$ with the parametrisation \eqref{eq:paramsolevenodd}. Below, we use \eqref{eq:vorder} in the first equality together with the fact that the composition of \eqref{eq:paramsolevenodd} with $\tft$ is identically $0$, then we use \eqref{eq:alpha} in the third equality to obtain
\begin{equation}\label{eq:asymptotic}
\begin{array}{rl}
    \gamma_t(z(s), \tw(s)+1)) & = \displaystyle \frac{c(\ell-1)(\alpha s^{v_1})^{\ell-1}(s^{v_3})^\kappa+a(\ell+1)(\alpha s^{v_1})^{\ell+1}+o(s^{(\ell+1)v_1})}
    {(1+\beta s^{v_2}+o(s^{v_2}))(b(\alpha s^{v_1})^\ell+o(s^{\ell v_1}))} \\
    & \\
     & = \displaystyle s^{v_1} \frac{c(\ell-1)\alpha^{-1}+a(\ell+1) \alpha}{b}+ o(s^{v_1}) \; = \; s^{v_1} \frac{2a\alpha}{b}+ o(s^{v_1})\\
    & \\
    & = \; s^{v_1} \frac{2\sqrt{ac}}{b}+ o(s^{v_1}) \; = \; t^{\kappa/2}\,  \frac{2\sqrt{ac}}{b}+ o(t^{\kappa/2}).
\end{array}
\end{equation}
When $\kappa$ is even, the two determinations of $\sqrt{ac}$ gives the two branches. When $\kappa$ is odd, the two determinations of $\sqrt{ac}$ are compensated by the determinations of $t^{\kappa/2}$ and gives only one branch, as expected. The result follows.

It remains to prove the statement when $\delta=\kappa/2$. In that case, it follows from Corollary \ref{cor:newton} that $F:=\conv(q_1,q_2,q_3)$ is the only facet of $\wD$ whose outer normal vector lies in $(\bR_{<0})^3$ and that $F+(2\ell-1,0,0)$ is a facet of $\New(\wN)$. As above, we can consider the partial toric compactification of $(\bC^\ast)^3$ given by the cone $D$ of $F$ over $q_0$. It follows now from \eqref{eq:zN} that the system $\{\tft^{F}=\wN^{F}\}$ has two distinct solutions inside the torus $\ttor\subset X_D$. We deduce as before that the branches of $\mathcal{C}$ passing through these solution are parametrised as in \eqref{eq:paramsolevenodd} with $(v_1,v_2,v_3)=(\kappa/2,\delta,1)$ if $\kappa$ is even and $(v_1,v_2,v_3)=(\kappa,2\delta,2)$ if $\kappa$ is odd. The rest of the proof is identical to the one of the previous case.
\end{proof}

As in Section \ref{sec:viropol}, we denote by $\vert \mathcal{L}_\Delta \vert$ the linear system of curves given by a Laurent polynomial with Newton polygon included in $\Delta$. Before we prove Theorem \ref{thm:patchwork}, let us recall that the space $\mathcal{D}\subset \vert \mathcal{L}_\Delta \vert$ of Laurent polynomial $f$ such that $\Cr(f)$ is singular is a semi-algebraic set of real codimension 1, see \cite[Theorem 1]{L19}. Recall also that the locus $\mathcal{D}$ corresponds to those curves having at least one Log-inflection point $p$ such that $\gamma(p)\in \bR P^1$, see \cite[Proposition 1.1]{L19}.

\begin{proof}[Proof of Theorem \ref{thm:patchwork}]
In order to prove the theorem, we will study the pullback to $U_\eps$ of $\mathcal{D}$ under the map $F:U_\eps \rightarrow \vert \mathcal{L}_\Delta \vert$ sending $t$ to $f_t$.

For any node $p\in C_0$, denote by $\ell\ell: U_\eps\rightarrow \Sym_2(\bC P^1)$ the map that associates to any $t$ the image under $\gamma_t$ of the two Log-inflection points in $C_t\cap \mathcal{V}$. In turn, define $\mathcal{D}_p:= \ell\ell^{-1}(\Sym_2(\bR P^1))\subset U_\eps$.

We claim that $F^{-1}(\mathcal{D})=\cup_p \mathcal{D}_p$ where the union runs over all the nodes $p\in C_0$.  To see this, observe first that we have the obvious inclusion $F^{-1}(\mathcal{D})\supset\cup_p \mathcal{D}_p$. The fact that the latter inclusion is an equality relies on the assumption that $f_t$ is Log non-degenerate. Indeed, all the ramification points of $\gamma_t$ that do not converge to a node of $C_0$ are away from $\gamma_t^{-1}(\bR P^1)$ for all $t\in U_\eps$. Therefore, the only branches of $F^{-1}(\mathcal{D})$ come from $\mathcal{D}_p$ for some node $p\in C_0$.

By Theorem \ref{thm:asymptotic}, the set $\mathcal{D}_p$ is diffeomorphic to $\{t\in U_\eps \big| \, \arg(t^\kappa)=1\}$, for $\eps$ small enough. This implies that the complement of $F^{-1}(\mathcal{D})=\cup_p \mathcal{D}_p$ is an open dense subset.

It remains to show that for any $t\in U$, the topological pair $\big(C_t\cap \mathcal{V}, \Cr(C_t)\cap \mathcal{V}\big)$ is the connected deformation of $\big(C_0\cap \mathcal{V}, \Cr(C_0)\cap \mathcal{V}\big)$. To see this, observe that for a given node $p\in C_0$,  the asymptotic formula of Theorem \ref{thm:asymptotic} implies that 
for any $t\in U_\eps\setminus \mathcal{D}_p$, the two branching points $\ell\ell(t)$ are on distinct hemispheres of $\bC P^1\setminus \bR P^1$. Therefore, any small simply connected neighbourhood of $p$ deforms into a cylinder in $C_t$ that is mapped under $\gamma_t$ to a disc in $\bC P^1$. This disc is cut into two pieces by $\bR P^1$, each piece containing a simple branching point. The latter cylinder together with the pullback of $\bR P^1$ is therefore the connected deformation pictured in Figure \ref{fig:defC_0}. The result follows.
\end{proof}

\begin{proof}[Proof of Theorem \ref{thm:tropicallimit}]
Let $p\in C_0$, $e=\Delta_k\cap\Delta_j$, $U_\eps$ and $\mathcal{V}$ be as above. We still work with the coordinate system introduced at the beginning of this section. In particular, the edge $e$ is directed by $(0,1)$. Then, the extremities of the bounded edge $e^\star\subset \Gamma$ dual to $e$ have coordinates $(0,0)$ and $(0,-\kappa)$. This is a classical fact from tropical geometry: it follows from the duality between $\Gamma$ and the subdivision of $\Delta$ and from \cite[Theorem 2.12]{BIMS}.
According to the proof of Theorem \ref{thm:asymptotic}, the two ramification points in $C_t\cap \mathcal{V}$ have coordinates
\[
\big( \sqrt{-ac}\cdot t^{\kappa/2} +o(t^{\kappa/2}), 1+o(1), t \big)
\]
in $(\bC^*)^3$. It follows that the tropical limit of any of these two points is $(-\kappa/2,0)\in \bR^2$, that is the midpoint of $e^\star$. Since $f_t$ is non-degenerate, the multiplicity $m$ of $e^\star$ coincides with the number of nodes of $C_0$ on the toric divisor corresponding to $e$, by duality between $\Gamma$ and $\{ \Delta_k\}_{k\in I}$. Thus, there are $2m$ ramification points tropically converging to the midpoint of $e^\star$. To see that there are exactly $2m$ ramification points tropically converging to the midpoint of $e^\star$, observe that all ramification points of $\gamma_t$ converge either to a node of $C_0$ or to a ramification point of $\gamma_0$. Eventually, recall that any family of point $p_t\in C_t$ that converge to a point in $C_0$ that is not a node tropically converges to a vertex of $\Gamma$. 

To conclude the proof, it remains to see that the above computation does not depend on the choice of coordinates made at the beginning of this section. This follows from the following observations. First, we applied a toric change of coordinates on $(z,w)$ to make the edge $e$ vertical in the $(j_1,j_2)$-plane. This results in an integer affine transformation of the tropical plane $\bR^2$. Then, we applied a transformation of the form $(z,w,t)\mapsto (t^\beta z,w,t)$ to make the face of $\Delta_\nu$ over $\Delta_k$ horizontal in the $(j_1,j_2,j_3)$-space. This results in a translation along the first coordinate axis in the tropical plane. It is now clear that the conclusion of the above computations is not affected by the latter changes of coordinates. The result follows. 
\end{proof}

\section{Curves with prescribed Log-critical locus}\label{sec:auxilliarycurves}

We aim to describe the $\Log$-critical locus of curves defined by polynomials of the form 
\begin{equation}\label{eq:f}
    f(z,w):=
    w\cdot \prod_{j=1}^d (z-b_j) + \prod_{j=1}^{d+1} (z-a_j)=:
    w\cdot q(z)+p(z)
\end{equation}
for specific choices of the parameters $a_1, \cdots ,a_{d+1}, b_1, \cdots, b_d$. 
The corresponding curve $Z(f)\subset(\bC^*)^2$ can be naturally compactified in the Hirzebruch surface $\Sigma_1$. The projection onto the first coordinate gives the rational parametrization $\varrho : z \mapsto \big(z,-\frac{p(z)}{q(z)}\big)$ of $Z(f)$.
Recall that the $\Log$-critical locus $\Cr(f)$ is given by $\Cr(f)=\gf^{-1}(\bR P^1)$ where $\gf$ is the logarithmic Gauss map 
\[
\gf(z,w)\;=\; \left[z \cdot \partial_z f(z,w)\, : \; w \cdot \partial_w f(z,w) \right] \;=\; \left[z \cdot (p'(z)+w\cdot q'(z))\, : \; w\cdot q(z) \right].
\]
Denote by $\tgf$ the composition of $\gf$ with the parametrization $\varrho$. In the affine chart $\left[u:v\right]\mapsto u/v$ of $\bC P^1$,
the map $\tgf$ is given by
\begin{equation}\label{eq:gammatilde}
    \tgf(z)\;=\; z \cdot \Big(\dfrac{q'(z)}{q(z)}- \dfrac{p'(z)}{p(z)}\Big)\;=\; \frac{-z}{z-a_{d+1}}+\sum_{j=1}^d \frac{z}{z-b_{j}} -\frac{z}{z-a_{j}}
\end{equation}
\[\hspace*{6,5cm}=\; -1- \frac{a_{d+1}}{z-a_{d+1}}+\sum_{j=1}^d \frac{b_j}{z-b_{j}} -\frac{a_j}{z-a_{j}}.\]
By definition, the pullback of $\Cr(f)$ under $\varrho$ is the set $\tgf^{-1}(\bR P^1)$. In particular, the latter set contains the points $a_1,\cdots,a_{d+1}, b_1, \cdots, b_d, 0$ and $\infty$ since
\[\tgf(\{a_1,\cdots,a_{d+1},b_1,\cdots,b_{d}\})=\{\infty\}, \quad \tgf(0)=0 \quad \text{and} \quad \tgf(\infty)=-1.\]

From now on, we take the coefficients $a_1, \cdots ,a_{d+1}, b_1, \cdots, b_d$ to be real and satisfying 
\begin{equation}\label{eq:ajbj}
    b_1\, < \, a_1 \, <\,  b_2 \, < \, a_2 \, <  \cdots \, < \,  b_{d} \, < \, a_{d} \, < \, a_{d+1} \, < \, 0.
\end{equation}

Below, we describe $\tgf^{-1}(\bR P^1)$ (see Proposition \ref{prop:critloc}) and study how the latter set is affected by the change of variable $z\mapsto z+\lambda i$ (see Proposition \ref{prop:deformation}).
For convenience, let us introduce the following notations for any $1\leqslant j \leqslant d$
\[
\ell_j(z):=\frac{z}{z-b_{j}} -\frac{z}{z-a_{j}}, \; \ell(z):=\frac{-z}{z-a_{d+1}}, \; c_j:=\frac{a_j+b_j}{2}, \;  \eps_j= a_j-b_j \; \text{and} \; \eps:= \max_{1\leqslant j \leqslant k} \eps_j.
\]

\begin{Proposition}\label{prop:eps}
The restriction of $\tgf$ to $\bR P^1$ has $2d+1$ zeroes and $2d$ simple critical points, provided that $\eps$ is small enough. Moreover, the value of $\tgf$ at each real critical point is strictly negative.
\end{Proposition}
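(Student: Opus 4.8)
The plan is to analyse $\tgf|_{\bR P^1}$ as a small perturbation of a degenerate limit. Set $\eps = 0$ formally: each pair $b_j < a_j$ collapses to a single point $c_j$, and $\ell_j(z) = \frac{z}{z-b_j} - \frac{z}{z-a_j}$ becomes, to leading order, $\eps_j \cdot \frac{-z}{(z-c_j)^2}$ — a term with a double pole at $c_j$ of fixed sign. So the model to keep in mind is $g(z) = \ell(z) + \sum_j \ell_j(z)$, which on each interval between consecutive $c_j$'s (and on the two unbounded real intervals, accounting for the point at infinity where $\tgf(\infty) = -1$) is a smooth function with controlled boundary behaviour: it blows up to $+\infty$ on one side of each $c_j$ and to $-\infty$ on the other, because $\ell_j$ has a simple pole at $b_j$ and at $a_j$ with opposite residues $b_j$ and $-a_j$ (note $b_j, a_j < 0$, so $b_j < 0$ and $-a_j > 0$). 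First I would pin down these signs precisely from the residue expansion $\tgf(z) = -1 - \frac{a_{d+1}}{z-a_{d+1}} + \sum_j \big( \frac{b_j}{z-b_j} - \frac{a_j}{z-a_j} \big)$, which shows $\tgf$ restricted to $\bR P^1 \cong \bR \cup \{\infty\}$ has, reading left to right across the $2d+1$ poles $b_1 < a_1 < \dots < b_d < a_d < a_{d+1}$, alternating signs of the residues in a way that forces sign changes.

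Next I would count zeros by the intermediate value theorem. On $\bR P^1$ there are $2d+1$ real poles of $\tgf$ (the points $b_j, a_j$ and $a_{d+1}$), dividing $\bR P^1$ into $2d+1$ open arcs. On each arc $\tgf$ is continuous and real-valued; I would check from the residue signs that on each arc the two boundary limits have opposite signs (e.g. on the arc between $b_j$ and $a_j$: near $b_j$ the dominant term is $\frac{b_j}{z-b_j}$ with $b_j<0$, near $a_j$ it is $\frac{-a_j}{z-a_j}$ with $-a_j>0$, giving limits of opposite sign), so each arc contains at least one zero, yielding at least $2d+1$ zeros. For the upper bound I would argue that $\tgf$, viewed as a rational function of degree $2d+1$ in $z$ (common denominator $(z-a_{d+1})\prod_j (z-b_j)(z-a_j)$, with numerator of degree $2d+1$), has at most $2d+1$ zeros on $\bR P^1$, hence exactly $2d+1$, all simple and one per arc. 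The derivative $\tgf'$ is then a rational function whose restriction to the interior of each arc changes sign an even number of times between consecutive zeros of $\tgf$; combined with the degree count (Rolle between consecutive real zeros of a function with exactly $2d+1$ of them on the circle gives at least $2d$ critical points, and a degree/Riemann–Hurwitz count for the degree-$(2d^2)$ map $\gamma$ — or directly bounding the number of real critical points of $\tgf$ by $2d$) forces exactly $2d$ simple critical points.

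For the sign of $\tgf$ at the real critical points, this is where the smallness of $\eps$ does the real work. In the limit $\eps \to 0$, the $2d$ critical points split into two groups: $d$ of them are trapped near the collapsing pairs (one just outside each pair $[b_j, a_j]$, or a local extremum forced by the near-double-pole structure), and the other $d$ sit on the "long" arcs between distinct $c_j$'s. I would show that on each long arc $\tgf$ is monotone for $\eps = 0$ except possibly one interior extremum, and evaluate $\tgf$ there: because $\tgf(\infty) = -1 < 0$ and $\tgf(0) = 0$ with $\tgf$ negative on a full neighbourhood-structure dictated by $\ell(z) = \frac{-z}{z-a_{d+1}}$ (which is negative for $z \in (a_{d+1},0)$ after the residue bookkeeping), the extremal values are pushed below zero; then a continuity/compactness argument (critical points depend continuously on $\eps$, critical values are bounded away from $0$ in the limit) extends this to all small $\eps$. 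The main obstacle I anticipate is precisely this last point: controlling the critical \emph{values} uniformly as $\eps \to 0$, since near the collapsing pairs $\tgf$ itself becomes large in absolute value, so one must check that the critical values there are large \emph{negative} rather than large positive. I expect this to reduce to a careful local analysis of $\eps_j \frac{-z}{(z-c_j)^2}$ plus a bounded smooth remainder, where the sign $-z > 0$ (since $z \approx c_j < 0$) guarantees the local extremum of that leading term is a negative minimum, and then one argues the remainder cannot overturn the sign for $\eps$ small. The genuinely global input — that none of the $d$ critical values on the long arcs is positive — should follow from the monotonicity-up-to-one-extremum picture together with the anchoring values $\tgf(0) = 0$, $\tgf(\infty) = -1$, and the sign of $\tgf$ just inside each arc.
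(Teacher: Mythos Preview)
Your residue bookkeeping contains a sign error that breaks the zero count. On the arc $(b_j,a_j)$ the one-sided limits of $\tgf$ are \emph{both} $-\infty$, not of opposite sign: the residue at $b_j$ is $b_j<0$, so $\tgf\to-\infty$ as $z\to b_j^{+}$; the residue at $a_j$ is $-a_j>0$, but $z-a_j\to 0^{-}$, so again $\tgf\to-\infty$ as $z\to a_j^{-}$. Likewise on each arc $(a_j,b_{j+1})$ both limits are $+\infty$. Thus the intermediate value theorem yields no zero on the arcs $(b_j,a_j)$ and says nothing directly about $(a_j,b_{j+1})$; the ``one zero per arc'' picture is false. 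The paper's argument is genuinely different here: it uses the smallness of $\eps$ to show that $\tgf$ at the midpoint of each $(a_j,b_{j+1})$ is close to $\ell\big(\tfrac{a_j+b_{j+1}}{2}\big)<0$, forcing \emph{two} zeros on each such arc, while the arcs $(b_j,a_j)$ carry none. Together with one zero on each of $(-\infty,b_1)$ and $(a_d,a_{d+1})$ and the zero at $0$, this gives $2(d-1)+3=2d+1$. This same distribution is what makes the negativity of the critical values immediate: on $(b_j,a_j)$ the function runs from $-\infty$ to $-\infty$ with no zero, so its maximum is negative; on $(a_j,b_{j+1})$ it runs from $+\infty$ to $+\infty$ with two zeros, so its minimum is negative.

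The second gap is the upper bound on the number of real critical points. The map $\tgf:\bC P^1\to\bC P^1$ has degree $2d+1$ (not $2d^2$), so by Riemann--Hurwitz it has $4d$ critical points in total; Rolle-type arguments give at least $2d$ real ones, but nothing you wrote rules out more. The paper settles this by a local analysis: as $\eps\to 0$ exactly four critical points collapse at each $c_j$, and a Newton--Puiseux study of the numerator of $\tgf'$ in the variables $(h,\eps)=(z-c_1,\eps)$ for the $d=1$ model (Newton diagram with vertices $(4,0)$, $(1,1)$, $(0,3)$) shows that for small $\eps>0$ exactly two of these four are real and the other two are complex conjugate. Your heuristic ``$-z>0$ near $c_j$ guarantees a negative minimum'' does not substitute for this: the approximation $\ell_j\approx -\eps_j z/(z-c_j)^2$ is \emph{positive} for $z<0$, $z\neq c_j$, and in any case breaks down inside $(b_j,a_j)$ where the correct sign of $\ell_j$ is negative.
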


The graph of $\tgf$ is represented in Figure \ref{fig:graphgamma}, for $d=3$.

\begin{proof}
Let us first show that $\tgf$ has $2d+1$ real zeroes. Since the limit of $\tgf$ on both extremities of each interval $(a_j,b_{j+1})$ is $+\infty$ for $1\leqslant j < d$, the inequality $\tgf\big(\frac{a_j+b_{j+1}}{2}\big)<0$ implies the existence of $2$ real zeroes on each such interval. To ensure that these inequalities hold, observe that 
\[
\ell_j(z) \; = \; \frac{b_j}{z-b_{j}} -\frac{a_j}{z-a_{j}} \; = \; \frac{(b_j-a_j)z}{(z-b_{j})(z-a_{j})} \; = \; \frac{-\eps_j \cdot z}{(z-b_{j})(z-a_{j})}
\]
and consequently that for any $1\leqslant j,q \leqslant d$, we have 
\[
\lim_{\eps_q\rightarrow 0} \ell_q\big(\frac{a_j+b_{j+1}}{2}\big)=0.
\]
For $1\leqslant j < d$, it implies that 
\[
\lim_{\eps \rightarrow 0} \tgf \big(\frac{a_j+b_{j+1}}{2}\big)= \ell\big(\frac{a_j+b_{j+1}}{2}\big) <0.
\]
and in turn that the value $\tgf\big(\frac{a_j+b_{j+1}}{2}\big)$ satisfies the sought inequality, for $\eps$ small enough.

This provides us with $2(d-1)$ many real zeroes. Each of the intervals $(-\infty,b_1)$, $(a_d,a_{d+1})$ contains a zero since 
\[
\lim_{z\rightarrow -\infty} \tgf(z)=-1, \; \lim_{z\rightarrow b_1^-} \tgf(z)=+\infty, \; \lim_{z\rightarrow a_d^+} \tgf(z)=+\infty \text{ and } \lim_{z\rightarrow a_{d+1}^-} \tgf(z)=-\infty.
\]
The remaining zero is at $0$. In total, it provides us with $2d+1$ real zeroes.

Let us now prove that $\tgf$ has $2d$ real simple critical points. First, we claim that the restriction of $\tgf$ to each of the $2d$ intervals of 
\[
(-\infty,a_d) \setminus \{b_1,a_1,b_2,a_2,\cdots,b_{d-1}, a_{d-1}, b_d\}
\]
has a global extreme value which is strictly negative. For $(-\infty,b_1)$, the claim follows from the fact that 
\[
\lim_{z\rightarrow -\infty} \tgf(z)=1^- \quad \text{and} \quad \lim_{z\rightarrow b_1^-} \tgf(z)=+\infty.
\]
For each remaining interval $(s,t)$, the claim follows from the fact that 
\[
\lim_{z\rightarrow s^+} \tgf(z) = \lim_{z\rightarrow t^-} \tgf(z) \in \{-\infty, +\infty\}
\]
and the location of the real zeroes determined above.

It remains to show that each such extreme point is simple and that $\tgf$ has no other real critical point, for $\eps$ small enough. To see this, recall that by Riemann-Hurwitz formula, exactly $4$ critical points of $\tgf$ collapse at $c_j$ when $\eps_j$ tends to $0$, for $1\leqslant j \leqslant k$. Indeed, the degree of $\tgf$ drops by $2$ since exactly two poles disappear, namely $a_j$ and $b_j$, and the degree of the ramification divisor of $\tgf$ drops therefore by $4$. We claim that for $\eps$ small enough, exactly two of these ramification points are among the real critical points of $\tgf$ exhibited above and the remaining two are complex conjugated. Moreover, there is no other real critical point since the function $\tgf$ is $1$-to-$1$ when $\eps=0$.

In order to prove the above claim, it suffices to let all the $\eps_q$, with $q\neq j$, tend to $0$ and study the distribution of the $4$ remaining critical points. In other words, it amounts to consider the case $d=1$, in particular $\eps_1=\eps$. Then, we have
\[
\begin{array}{rcl}
     \tgf' (z) & = &\displaystyle \frac{(a_1-b_1)(z^2-a_1b_1)}{(z-a_1)^2(z-b_1)^2}  +\ell'(z)\\
     & & \\
     & =  &\displaystyle \frac{\eps(h^2+2c_1h-\eps^2/4)+(h^2-\eps^2/4)^2\cdot \ell'(z)}{(h^2-\eps^2/4)^2}\\ 
\end{array}
\]
where we substituted $z=h+c_1$. In order to study the behaviour of the $4$ ramification points of $\tgf$ converging to $c_1$ when $\eps$ tends to $0$, we need to analyse the singularity of the numerator of $\tgf$ in the variables $(h,\eps)$ at $(0,0)$. Since $\alpha:=\ell'(c_1)$ is a strictly negative number, we deduce that the latter singularity has Newton diagram with vertices $(4,0)$, $(1,1)$ and $(0,3)$ and corresponding truncation $\alpha h^4-2c_1h\eps-\eps^3/4$. By the Newton-Puiseux theorem, we deduce that the singularity consists of two real branches parametrised respectively by $\eps\mapsto \big(-\frac{1}{8c_1}\eps^2 +o(\eps^2),\eps\big)$ and $h\mapsto\big(h, \frac{\alpha}{2c_1} h^3+o(h^3)\big)$.
For $t>0$ and arbitrarily small, the line $\eps=t$ intersects the first branch in one real point and the second branch in one real point and two complex conjugated points. The result follows.
\end{proof}

\begin{figure}[h]
    \centering
    \includegraphics[scale=0.5]{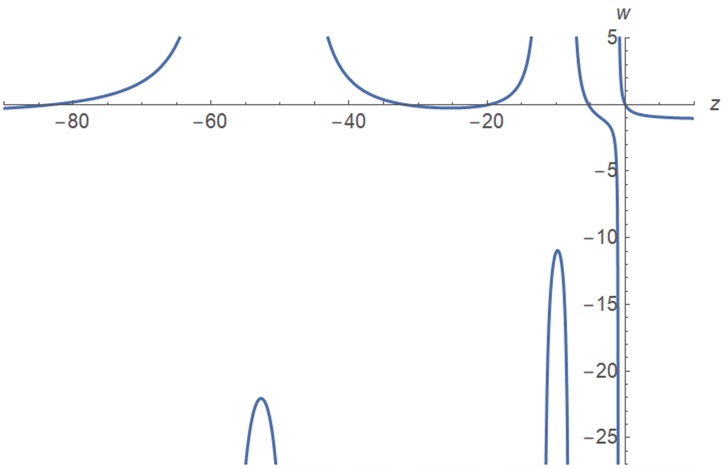}
    \caption{The graph of $\protect\tgf$ for $d=2$, $b_1=-58$, $a_1=-48$, $b_2=-12$, $a_2=-8$ and $a_3=-1$.}
    \label{fig:graphgamma}
\end{figure}

\begin{Remark}\label{rem:critical}
As we have seen in the above proof, there are exactly two real critical points of $\tgf$ in an arbitrarily small neighbourhood of each point $c_j$ for $1\leqslant j \leqslant d$. Moreover, those two critical points, let us say $p_j$ and $q_j$, are such that $p_j<b_j<q_j<a_j$, see Figure \ref{fig:graphgamma}.
\end{Remark}

\begin{Proposition}\label{prop:critloc}
The preimage of $\bR P^1$ under $\tgf$ is the union of $\bR P^1$ with $d$ pairwise disjoint smoothly embedded circles in $\bC P^1$,  provided that $\eps$ is small enough. Each such circle intersects $\bR P^1$ at the $2$ real critical points $p_j$ and $q_j$ of $\tgf$ near $c_j$, for some $1\leqslant j \leqslant d$.
\end{Proposition}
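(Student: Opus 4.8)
The plan is to analyse the real one-dimensional variety $\tgf^{-1}(\bR P^1)\subset \bC P^1$ directly, exploiting the explicit rational expression \eqref{eq:gammatilde} for $\tgf$ and the detailed information about its real critical points gathered in Proposition \ref{prop:eps} and Remark \ref{rem:critical}. Since $\tgf$ is a real rational map of degree $2d$, the set $\tgf^{-1}(\bR P^1)$ is a real algebraic curve in $\bC P^1$ that is invariant under complex conjugation and contains $\bR P^1$ itself (because $\tgf$ is real, $\tgf(\bR P^1)\subset\bR P^1$). The strategy is: first, show that away from $\bR P^1$ this curve consists of smooth disjoint circles; second, count those circles using the critical points; third, locate each circle near the corresponding $c_j$.

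First I would establish smoothness of $\tgf^{-1}(\bR P^1)$ at every point not lying on $\bR P^1$. A point $z_0\notin\bR P^1$ with $\tgf(z_0)\in\bR P^1$ is a singular point of the real curve $\tgf^{-1}(\bR P^1)$ only if $z_0$ is a ramification point of $\tgf$, i.e.\ $\tgf'(z_0)=0$. By Proposition \ref{prop:eps} the restriction of $\tgf$ to $\bR P^1$ has exactly $2d$ simple critical points, all real; by the Riemann--Hurwitz count recalled in the proof of that proposition (four critical points collapse at each $c_j$ as $\eps_j\to 0$, two of them real and two complex-conjugate) the map $\tgf$ has, for $\eps$ small, exactly $4d$ critical points, of which $2d$ are the real ones on $\bR P^1$ and the other $2d$ come in $d$ complex-conjugate pairs sitting near the points $c_j$. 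For such a non-real critical point $z_0$ near $c_j$, the critical value $\tgf(z_0)$ is \emph{not} real: indeed, as the Puiseux computation in the proof of Proposition \ref{prop:eps} shows, the two non-real ramification points near $c_j$ are parametrised by $h\mapsto(h,\tfrac{\alpha}{2c_1}h^3+o(h^3))$ with $h$ on a non-real branch of the discriminant, and one checks that the corresponding critical value has non-vanishing imaginary part for $\eps$ small (this is the place where one must verify that $\tgf$ does not send a non-real ramification point to $\bR P^1$). Hence no point of $\tgf^{-1}(\bR P^1)\setminus\bR P^1$ is a critical point of $\tgf$, so $\tgf^{-1}(\bR P^1)$ is a smooth $1$-manifold away from $\bR P^1$, and its only possible non-manifold points are the $2d$ real simple critical points $p_j,q_j$, where $\bR P^1$ meets the rest of the curve transversally (a simple real critical point of a real rational map produces exactly one extra real branch crossing $\bR P^1$ transversally).

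Next I would count and locate the components. Write $\tgf^{-1}(\bR P^1)=\bR P^1\cup K$ where $K$ is the closure of the complement of $\bR P^1$; by the previous paragraph $K$ is a disjoint union of smoothly embedded circles, each meeting $\bR P^1$ in finitely many of the points $\{p_j,q_j\}$, and each $p_j,q_j$ lies on exactly one such circle. It remains to show $K$ has exactly $d$ components, one through each pair $\{p_j,q_j\}$. For this I would use a deformation/continuity argument in $\eps$: when $\eps=0$ the map $\tgf$ is $1$-to-$1$ (degree $1$), so $\tgf^{-1}(\bR P^1)=\bR P^1$ and $K=\varnothing$; as $\eps$ increases, for each $j$ a circle is born out of the point $c_j$, because near $c_j$ the local picture is governed by the Newton--Puiseux analysis of Proposition \ref{prop:eps}, which produces precisely the real critical points $p_j<b_j<q_j<a_j$ and a single small loop of $\tgf^{-1}(\bR P^1)$ through them (the real branch through $p_j$ and $q_j$ closes up on the non-real side). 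Since the degree of $\tgf$ grows by $2$ per $j$ and the total number of real critical points is $2d$, all of which are accounted for by the $d$ local loops, there is no room for any further component of $K$, nor can two of the local loops merge for $\eps$ small. Finally, disjointness of the $d$ circles follows from their being disjoint loops localised near the $d$ distinct points $c_1,\dots,c_d$, together with smoothness of $\tgf^{-1}(\bR P^1)$ away from $\bR P^1$ (two components could only meet at a singular point of the curve, which we have excluded off $\bR P^1$, and on $\bR P^1$ each $p_j,q_j$ belongs to a unique component).

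The main obstacle I expect is the verification that the non-real ramification points of $\tgf$ are mapped off $\bR P^1$ for $\eps$ small --- equivalently, that the genuinely complex critical values stay away from the real line. This is what guarantees both smoothness of $K$ and the fact that each little loop is honestly embedded with no extra real crossings; it has to be extracted from the Newton--Puiseux parametrisation of the critical locus near $c_j$ rather than being immediate. Everything else (the Riemann--Hurwitz bookkeeping, the transversality at the $p_j,q_j$, and the continuity-in-$\eps$ argument) is routine once that point is secured.
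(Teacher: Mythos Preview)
Your approach differs from the paper's in a substantive way, and the difference matters. The paper does not attempt to show that the non-real ramification points of $\tgf$ have non-real critical values --- the step you yourself flag as the main obstacle. Instead, it proves a short structural lemma (Lemma~\ref{lem:preimage}) valid for any real meromorphic $g$: the preimage $g^{-1}(\bR P^1)$ is always a union of smoothly \emph{immersed} circles with at worst transverse crossings, and it can never contain a smooth open arc whose closure adds a single critical endpoint. This handles the singularity structure uniformly, with no need to locate the non-real critical values. The paper then localises $\tgf^{-1}(\bR P^1)\setminus\bR P^1$ inside small discs $\mathscr{B}_j$ about each $c_j$ (by continuity in $\eps$, since at $\eps=0$ the map has degree one and $\tgf^{-1}(\bR P^1)=\bR P^1$), and uses the forbidden-arc clause of the lemma to argue that the unique immersed circle in $\mathscr{B}_j$ meeting $\bR P^1$ cannot pass through the non-real critical points. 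Your obstacle is thus bypassed rather than confronted.

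There is also a genuine gap in your ``no extra components'' step. Counting critical points does not exclude a smooth circle in $\tgf^{-1}(\bR P^1)$ disjoint from $\bR P^1$ that maps to $\bR P^1$ as an unramified cover: such a circle carries no ramification and contributes nothing to the Riemann--Hurwitz tally, so ``no room'' is not a valid conclusion, and the deformation argument is delicate because the degree of $\tgf$ jumps as $\eps\to 0$. The paper closes this gap with a different observation: any extra circle in a half of $\mathscr{B}_j\setminus\bR P^1$ would have to cover $\bR P^1$ under $\tgf$ and hence contain a preimage of $0$; but Proposition~\ref{prop:eps} shows that all $2d+1$ zeroes of $\tgf$ are real, so every preimage of $0$ already sits on $\bR P^1$. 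This real-zero count is the missing ingredient you would need to make your route work.
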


\begin{Lemma}\label{lem:preimage}
Let $g:\Sigma\rightarrow \bC P^1$ be a real meromorphic function on a compact Riemann surface $\Sigma$. Then, the preimage $g^{-1}(\bR P^1)\subset \Sigma$ is a union of smoothly immersed circles such that each circle (respectively $2$ circles) self-intersects (respectively intersect) at worst transversely. Moreover, the set  $g^{-1}(\bR P^1)$ cannot contain a smooth open arc $\alpha$ such that $\overline{\alpha}\setminus \alpha$ is a single point which is critical for $g$. 
\end{Lemma}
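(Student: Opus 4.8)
The plan is to deduce both assertions from a local normal form for $g$ near the points of $V:=g^{-1}(\bR P^1)$, and then to run a parity argument for the ``moreover'' part. Near a point $x\in V$ let $k\geqslant 1$ be the ramification index of $g$ at $x$. I would first choose an affine coordinate $u$ on $\bC P^1$ centered at $g(x)$ in which $\bR P^1=\{\Im u=0\}$ near $g(x)$ — this is possible because $\bR P^1$ is the fixed locus of complex conjugation on $\bC P^1$, which in any affine chart adapted to one of its fixed points is the usual conjugation — and a holomorphic coordinate $z$ centered at $x$ in which, after a holomorphic change of $z$, the map $g$ reads $u=z^{k}$. Then near $x$
\[
V=\{\,z : z^{k}\in\bR\,\}=\bigcup_{j=0}^{k-1}\{\,re^{ij\pi/k}:r\in\bR\,\},
\]
a union of $k$ pairwise transverse real-analytic lines through $x$ (a single smooth arc when $k=1$, i.e. when $x$ is not a ramification point). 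Since $g$ is non-constant, the set $R$ of ramification points of $g$ lying on $V$ is finite; the display shows that $V\setminus R$ is a smooth $1$-manifold and that $V$ is a union of $2k$ rays near each point of $R$. Matching, at every point of $R$, each ray with the one opposite to it — i.e. running ``straight through'' along each of the $k$ local lines — decomposes $V$ into finitely many smoothly immersed circles whose only self-intersections and pairwise intersections occur at $R$ and are transverse there. This gives the first assertion.

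For the ``moreover'' part I would argue by contradiction. Suppose $V$ contains a smooth open arc $\alpha$ with $\overline\alpha\setminus\alpha=\{p\}$ and $p$ critical for $g$. By the local structure of $V$ each end of $\alpha$ converges to a single point of $V$, and together these points form $\overline\alpha\setminus\alpha$; hence both ends of $\alpha$ converge to $p$, so $\overline\alpha$ is a circle through $p$. As a smooth arc in $V$ coincides locally with a branch of $V$, the arc $\alpha$ runs straight through any ramification point of $g$ in its interior, so I may assume (replacing $\alpha$ by a maximal arc of $V\setminus R$ and, if needed, running the argument at another point of $R$) that $\alpha$ meets $R$ only at $p$. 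Set $c:=g(p)\in\bR P^1$ and let $k\geqslant 2$ be the ramification index of $g$ at $p$. By the normal form, $\overline\alpha$ enters $p$ along two of the $2k$ rays of $V$ at $p$, and $g$ carries each of these rays homeomorphically onto one of the two halves of a small sub-arc of $\bR P^1$ with endpoint $c$. Thus $g|_{\overline\alpha}\colon\overline\alpha\cong S^{1}\to\bR P^1\cong S^{1}$ is a local homeomorphism away from $p$, and at $p$ it either has a local extremum (a ``fold'') or passes monotonically through $c$, according to whether $g$ sends the two rays of entry to the same, or to opposite, sides of $c$. But a continuous, piecewise strictly monotone circle map has an even number of folds, its local maxima and local minima alternating along $S^{1}$; since $p$ is the only possible fold of $g|_{\overline\alpha}$, it can have no fold at all, so the two rays of entry are sent to opposite sides of $c$ — and by the normal form this forces $k$ to be odd, $\overline\alpha$ to be smooth at $p$, and $g|_{\overline\alpha}$ to be an honest covering of $\bR P^1$.

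The remaining step is to exclude this residual configuration, and this is where the real difficulty lies: here the unique sheet of the covering $g|_{\overline\alpha}$ over $c$ is $p$ itself, whereas $g$ has $k\geqslant 3$ sheets colliding at $p$, and one must track how the sheets of $g|_{\overline\alpha}$ sit inside those of $g$ as the base point crosses the critical value $c$ in order to reach a contradiction. For the purposes of this paper the residual case simply does not occur: there $\bR P^1$ is itself one of the immersed circles of $V$, so one of the local lines at any relevant $p$ is $\bR P^1$; an arc $\alpha$ with both ends at $p$ cannot run along $\bR P^1$ (it would then be all of $\bR P^1$, against $\overline\alpha\setminus\alpha=\{p\}$), so it enters $p$ along the two halves of the other local line, which are opposite rays, and with $k=2$ even the map $g|_{\overline\alpha}$ then folds at $p$, contradicting the parity count. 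The main obstacle is thus the local analysis of $g$ restricted to $V$ at a ramification point — pinning down the fold/monotone alternative for $g|_{\overline\alpha}$ at $p$, for which the normal form $u=z^{k}$ together with the straightening of $\bR P^1$ is exactly the right tool — together with the covering/monodromy count that is needed to dispose of the no-fold case in full generality.
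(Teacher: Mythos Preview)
Your treatment of the first assertion is essentially the paper's: the paper lifts $V$ to the unit tangent bundle to obtain disjoint embedded circles and then projects, while you work directly with the local model $u=z^{k}$ and pair opposite rays. Both are fine and equivalent.

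The ``moreover'' part has genuine gaps. Your reduction to the case where $\alpha$ meets $R$ only at $p$ does not work: if $\alpha$ has an interior ramification point $q$, replacing $\alpha$ by a maximal sub-arc of $V\setminus R$ gives an arc with two distinct endpoints in $R$, so the single-endpoint hypothesis is lost and there is nothing to ``run at another point''. In fact the statement as written is \emph{false}: take $g(z)=z^{2}$ on $\bC P^{1}$ and $\alpha=i\bR P^{1}\setminus\{0\}$, the imaginary great circle punctured at the origin. This is a smooth open arc in $g^{-1}(\bR P^{1})=\bR P^{1}\cup i\bR P^{1}$ with $\overline\alpha\setminus\alpha=\{0\}$ a single critical point; the interior point $\infty\in\alpha$ is also critical, and $g|_{\overline\alpha}$ has two folds (at $0$ and $\infty$), so your parity count is satisfied. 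Any correct argument must therefore use more than the bare hypothesis, and your reduction cannot be repaired in general. (The paper's own one-line argument---that $g$ is monotone on $\alpha$, hence the two ends have distinct images---has the same defect: a monotone map of an arc to a circle may wrap around and return to its starting value, and at interior even-index critical points $g|_\alpha$ is not even monotone.)

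Your patch for the paper's application is also misaimed: you assume the relevant $p$ lies on $\bR P^{1}$, but in Proposition~\ref{prop:critloc} the lemma is invoked to exclude the \emph{complex} simple critical points in $\mathscr B_{j}$, so $p$ is non-real there. What actually saves the easier of the two applications (the hypothetical ``extra circle'' contained in one half of $\mathscr B_j\setminus\bR P^{1}$) is precisely your fold/parity idea, applied correctly: that circle meets only the single simple critical point $r$ of its half-plane, so $\alpha=C\setminus\{r\}$ has no interior ramification, and since $k=2$ the two opposite rays at $r$ have the \emph{same} parity---one fold, which the parity count rules out. For the other application (the circle through the two real critical points $p_j,q_j$), the arc carries interior folds at $p_j,q_j$ and, by conjugation symmetry, at $\bar r$ as well, and the parity argument alone no longer suffices; one needs an additional input (for instance the location of the real zeroes of $\tgf$).
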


\begin{proof}
Fix an arbitrarily chosen Riemannian metric on $\Sigma$ and denote by $UT\, \Sigma$ the unitary tangent bundle with respect to the latter metric. Fix on orientation on $\bR P^1$. Then, the $1$-fold $g^{-1}(\bR P^1)\subset \Sigma$ inherits an orientation on its smooth locus and therefore lifts to $UT\, \Sigma$. Since the local model of $g^{-1}(\bR P^1)$ at singular points is $\{ \vert z \vert < 1 \, , \; z^n\in \bR \}$, the closure $\mathscr{C}$ of this lift is a smooth compact oriented $1$-fold covering $\bR P^1$. Thus, the set $\mathscr{C}$ is a disjoint union of smoothly embedded circles in $UT\, \Sigma$. The restriction of the projection $UT\, \Sigma \rightarrow \Sigma$ from $\mathscr{C}$ to $g^{-1}(\bR P^1)$ is a smooth immersion. The transversality property follows from the local models $\{ \vert z \vert < 1 \, , \; z^n\in \bR \}$.

For the second part of the statement, the function $g$ is monotone on each smooth arc $\alpha \subset g^{-1}(\bR P^1)$. In particular, the image under $g$ of the two extremities of $\alpha$ are necessarily distinct points in $\bR P^1$. It prevents the existence of an arc $\alpha$ as in the statement.
\end{proof}

\begin{proof}[Proof of Proposition \ref{prop:critloc}]
For any small $\delta>0$, we can choose $\eps$ small enough such that $\tgf^{-1}(\bR P^1) \setminus \bR P^1$ is contained in 
\[
\mathscr{B}:= \bigcup_{1\leqslant j\leqslant d} \left\lbrace z\in \bC \, \big| \; \vert z-c_j \vert < \delta  \right\rbrace.
\]
Indeed, for any point $z\in \bC P^1 \setminus \mathscr{B}$, the image $\tgf(z)$ depends continuously on the parameters $\eps_j$. When $\eps_j=0$ for all $1\leqslant j\leqslant d$, the function $\tgf(z)$ satisfies $\tgf^{-1}(\bR P^1)= \bR P^1$ as we have seen above. This proves the claim.

By the Riemann-Hurwitz formula, we showed above that we can also guarantee that there are exactly $4$ simple critical points of $\tgf$ in each neighbourhood $\mathscr{B}_j:= \left\lbrace z\in \bC \, \big| \; \vert z-c_j \vert < \delta  \right\rbrace$, $1\leqslant j\leqslant d$, taking $\eps$ small enough. Moreover, exactly $2$ simple critical points are located on $\bR P^1\cap \mathscr{B}_j$ and the rest of the ramification consists of $2$ simple critical points that are complex conjugate. 

By Lemma \ref{lem:preimage}, the set $\tgf^{-1}(\bR P^1)$ consists of a union of smoothly immersed circles. Since there are exactly $2$ real simple critical points in $\mathscr{B}_j$, there is exactly one circle of $\tgf^{-1}(\bR P^1)$ in $\mathscr{B}_j$ that intersects $\bR P^1$. This circle cannot pass through the remaining critical points in $\mathscr{B}_j$, otherwise it would contain the kind of smooth arc $\alpha$ prohibited by Lemma \ref{lem:preimage}. It follows that this circle is smoothly embedded. Eventually, we claim that there are no extra circles in $\tgf^{-1}(\bR P^1)$. Indeed, any extra circle should be contain in one of the halves of $\mathscr{B}_j\setminus \bR P^1$ for some $1\leqslant j\leqslant d$. By Lemma \ref{lem:preimage}, it cannot pass through the only critical point of $\tgf$ in this half, otherwise it would contain an arc $\alpha$. It is therefore smooth and cover  $\bR P^1$ at least once via $\tgf$. This is in contradiction with the fact that $\tgf$ has only real zeroes.
\end{proof}

For a given parameter $\lambda \in \bR$, define $g(z,w):= f(z+ \lambda i, w)$, that is   
\begin{equation}\label{eq:g}
    g(z,w):=
    w\cdot \prod_{j=1}^d (z-(b_j-\lambda i)) + \prod_{j=1}^{d+1} (z-(a_j-\lambda i))
\end{equation}
where the real parameters $a_1,\cdots,a_{d+1}, b_1, \cdots , b_d$ are as in \eqref{eq:ajbj}. 
The curve $Z(g)$ can be parametrised by
\[
\varrho_\lambda \; :  \;  t\mapsto \Big(t-\lambda i, -\frac{p(t)}{q(t)}\Big)
\]
and the logarithmic Gauss map $\gg$ is given by
\[
\gg(z,w) \;=\; \left[z \cdot \big(p'(z+ \lambda i)+ w\cdot q'(z+ \lambda i)\big)\, : \; w\cdot q(z+ \lambda i) \right]
\]
Let $\tgg$ be the composition of $\gg$ with the parametrisation $\varrho_\lambda$. In the affine chart $\left[u \, : \; 1 \right]$ of $\bC P^1$,
the map $\tgg$ is given by 
\[
\tgg(t)\;=\; (t-\lambda i) \cdot \Big(\dfrac{q'(t)}{q(t)}- \dfrac{p'(t)}{p(t)}\Big)\;=\; \frac{t-\lambda i}{t} \cdot \tgf(t) \;=\; \Big(1-\frac{\lambda i}{t}\Big)\cdot \tgf(t).
\]

\begin{Proposition}\label{prop:deformation}
For $\vert \lambda \vert >0$ and  $\eps>0$ small enough, the preimage $\tgg^{-1}(\bR P^1)$ is smooth and has $d+1$ connected components: one of the connected components contains the points $$\lambda i , a_1, \cdots, a_{d+1}, \infty.$$ 
Each of the $k$ remaining components contains exactly one of the points $b_1, \cdots, b_d$.
\end{Proposition}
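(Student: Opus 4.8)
The plan is to realise $\tgg^{-1}(\bR P^1)$ as a perturbation of $\tgf^{-1}(\bR P^1)$ obtained by turning the parameter $\lambda$ on from $0$. First I would choose $\eps>0$ small enough that Proposition~\ref{prop:critloc} holds, so that $\tgf^{-1}(\bR P^1)=\bR P^1\cup\mathcal{O}_1\cup\dots\cup\mathcal{O}_d$ with the $\mathcal{O}_j$ disjoint smoothly embedded circles, $\mathcal{O}_j$ meeting $\bR P^1$ transversally exactly at the two real critical points $p_j,q_j$ of $\tgf$ near $c_j$, where $p_j<b_j<q_j<a_j$ by Remark~\ref{rem:critical}; then take $|\lambda|>0$ small, depending on $\eps$. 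Since $f$ has real coefficients, replacing $\lambda$ by $-\lambda$ conjugates $\tgg$ in source and target, hence carries $\tgg^{-1}(\bR P^1)$ homeomorphically onto its counterpart for $-\lambda$ while fixing $\bR P^1$ and the points $a_j,b_j,\infty$; so we may assume $\lambda>0$.

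The first step is to show that $\tgg$ has no critical value on $\bR P^1$, so that $\tgg^{-1}(\bR P^1)$ is a disjoint union of smoothly embedded circles by Lemma~\ref{lem:preimage}. The simple zero of $\tgf$ at $0$ (note $\tgf'(0)=\sum_{j=1}^{d}|b_j|^{-1}-\sum_{j=1}^{d+1}|a_j|^{-1}\neq 0$ by \eqref{eq:ajbj}) cancels the pole of $(t-\lambda i)/t$, so $\tgg=\tfrac{t-\lambda i}{t}\tgf$ has exactly the $2d+1$ simple poles $a_1,\dots,a_{d+1},b_1,\dots,b_d$; hence $\deg\tgg=2d+1$ and, by Riemann--Hurwitz, $\tgg$ has $4d$ critical points, lying for $\lambda$ small near the $4d$ critical points of $\tgf$ ($2d$ real and $2d$ non-real, all simple, by Proposition~\ref{prop:eps} and the proof of Proposition~\ref{prop:critloc}). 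Near a non-real critical point $z_0$ of $\tgf$, the corresponding critical value of $\tgg$ is close to $\tgf(z_0)$, which does not lie on $\bR P^1$ --- otherwise $z_0$ would be a singular point of the curve $\tgf^{-1}(\bR P^1)$, impossible since that curve is smooth away from the real points $p_j,q_j$. Near $p_j$, let $z_j(\lambda)$ be the critical point of $\tgg$ with $z_j(0)=p_j$ and put $V_j(\lambda)=\tgg\bigl(z_j(\lambda)\bigr)$; then $V_j(0)=\tgf(p_j)\in\bR$, while differentiating $\tgg=\tfrac{t-\lambda i}{t}\tgf$ in $\lambda$ and using $\tgg'\bigl(z_j(\lambda)\bigr)=0$ gives $V_j'(0)=-\tfrac{i}{p_j}\tgf(p_j)$, which is purely imaginary and nonzero because $p_j<0$ and $\tgf(p_j)<0$ (Proposition~\ref{prop:eps}). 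Thus $\Im V_j(\lambda)=-\lambda\,\tgf(p_j)/p_j+O(\lambda^2)<0$ for $\lambda>0$ small, and similarly at $q_j$. So no critical value of $\tgg$ lies on $\bR P^1$.

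It remains to identify the components. Away from small neighbourhoods of $p_1,q_1,\dots,p_d,q_d$, the curve $\tgg^{-1}(\bR P^1)$ is $C^1$-close to $\tgf^{-1}(\bR P^1)=\bR P^1\cup\bigcup\mathcal{O}_j$, except that near $0$, where $\tgg(t)=\tgf'(0)(t-\lambda i)+O\bigl((t-\lambda i)^2\bigr)$ with $\tgf'(0)\in\bR^{\ast}$, the corresponding arc of $\bR P^1$ is pushed off to pass through $\lambda i$. At the node $p_j$, the quadratic model $\tgg(z)=V_j(\lambda)+\tfrac12\tgg''\bigl(z_j(\lambda)\bigr)(z-z_j(\lambda))^2+\dots$, together with $\Im V_j(\lambda)<0$ and $\tgg''\bigl(z_j(\lambda)\bigr)$ close to $\tgf''(p_j)>0$ ($p_j$ being a non-degenerate local minimum of $\tgf|_{\bR}$ with negative value), shows that the two local branches of $\tgg^{-1}(\bR P^1)$ splice the arc $(p_j,q_j)$ of $\bR P^1$, which contains $b_j$, to the upper arc of $\mathcal{O}_j$, and the arc of $\bR P^1$ on the other side of $p_j$ to the lower arc of $\mathcal{O}_j$. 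At $q_j$, where $\tgf''(q_j)<0$ ($q_j$ a non-degenerate local maximum), the reversal of this sign gives again that $(p_j,q_j)$ is spliced to the upper arc of $\mathcal{O}_j$ and the arc of $\bR P^1$ on the other side of $q_j$ to the lower arc. Chasing these splicings around $\bR P^1$: for each $j$, the arc $(p_j,q_j)$ together with the upper arc of $\mathcal{O}_j$ closes into a small circle carrying exactly the pole $b_j$; the remaining arcs of $\bR P^1$ --- those carrying $a_1,\dots,a_{d+1}$, the point $\infty$, and the arc pushed through $\lambda i$ --- together with the lower arcs of all the $\mathcal{O}_j$ close into a single circle carrying exactly the poles $a_1,\dots,a_{d+1}$. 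These $d+1$ circles have degrees $1,\dots,1,d+1$ over $\bR P^1$, which sum to $2d+1=\deg\tgg$; since every component carries at least one pole, there is nothing else, and the statement follows.

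The delicate step is the bookkeeping of the node resolutions: one must determine which of the two smoothings takes place at each of the $2d$ points $p_j,q_j$, which rests on the conspiracy of the signs of $p_j,q_j$ (negative, by \eqref{eq:ajbj}), of $\tgf(p_j),\tgf(q_j)$ (negative, by Proposition~\ref{prop:eps}), and of $\tgf''(p_j)$ versus $\tgf''(q_j)$ (a local minimum versus a local maximum of $\tgf|_{\bR}$), as well as on checking that the resulting splicings are globally consistent around $\bR P^1$ so that the ``upper halves'' of the $\mathcal{O}_j$ all pinch off while the ``lower halves'' reassemble the real locus into one circle.
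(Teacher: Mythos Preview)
Your argument is correct and follows essentially the same strategy as the paper: view $\tgg^{-1}(\bR P^1)$ as a Morse-theoretic perturbation of $\tgf^{-1}(\bR P^1)$ and determine at each of the $2d$ nodes which of the two smoothings occurs. The paper phrases the local analysis via the Morse Lemma with parameters applied to $F=\Im(\tgf)$ with perturbation $-\lambda G$, where $G(x,0)=x^{-1}\tgf(x)$, and reads off the resolution from the global sign distribution of $\Im(\tgf)$ (Figure~\ref{fig:defgamma}); you instead track the critical value $V_j(\lambda)$ directly, compute $V_j'(0)=-\tfrac{i}{p_j}\tgf(p_j)$, and combine the sign of $\Im V_j$ with the sign of $\tgf''$ (local minimum at $p_j$, local maximum at $q_j$) in the quadratic model. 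These are two packagings of the same sign computation --- your $\Im V_j(\lambda)\approx -\lambda\,G(p_j,0)$, and the sign pattern of $\Im\tgf$ around each node encodes precisely the sign of $\tgf''$ there. Your version is a bit more explicit about why the splicings at $p_j$ and $q_j$ are \emph{different} (via $\tgf''(p_j)>0$ versus $\tgf''(q_j)<0$), which the paper leaves to the picture; conversely, the paper's global sign argument makes the consistency of the splicings around $\bR P^1$ more visibly automatic. One small cosmetic point: in your final count the phrase ``since every component carries at least one pole'' is not the operative reason; the degree tally $1+\cdots+1+(d+1)=2d+1=\deg\tgg$ alone already excludes further components.
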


Before proving the above statement, let us recall some basic facts from Morse theory. Let $F,G:\bR^2\rightarrow \bR$ be a real analytic functions and let $(x_0,y_0)$ be a non-degenerate Morse critical point of index $1$ of $F$ such that $F(x_0,y_0)=0$. In particular, the zero locus of $F$ at $(x_0,y_0)$ consists of two branches intersecting transversely.
Assume now that $G(x_0,y_0)\neq 0$. Then, it follows from the Morse Lemma with parameters that, for any arbitrarily small $\lambda>0$, the zero locus of $F-\lambda\cdot G$ is smooth at $(x_0,y_0)$. The local deformation $(F-\lambda\cdot G)^{-1}(0)$ of $F^{-1}(0)$ depends on the sign of $G(x_0,y_0)$, see Figure \ref{fig:def}.

Recall as well that the imaginary part of a holomorphic function $h(z)$ has non-degenerate Morse critical point of index $1$ at any simple critical point of $h$. Indeed, up to an appropriate change of coordinate on the source, we can assume that the critical point under consideration is $0$ and that $h$ is given by $h(z)=\alpha+z^2+o(z^2)$ whose Hessian in the coordinate $x:=\Re(z), y:=\Im(z)$ is $\left(\begin{smallmatrix} 0&2\\2&0\end{smallmatrix}\right)$.

\begin{figure}
    \centering
    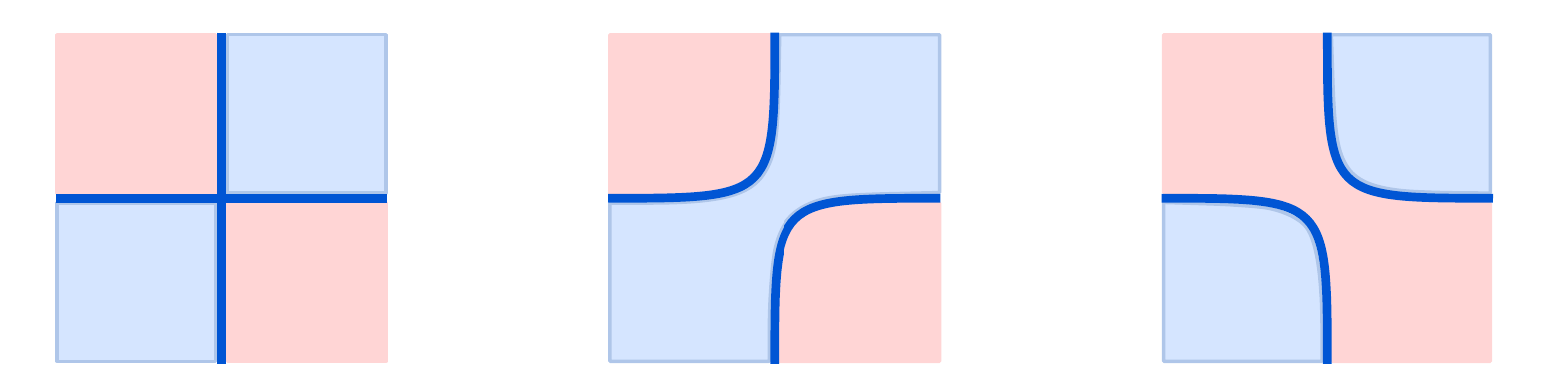
    \caption{The locus $F=0$ (on the left) and its deformations $F=\lambda G$ for $G$ positive (in the middle) and $G$ negative (on the right) and $\lambda>0$.}
    \label{fig:def}
\end{figure}

\begin{proof}
First, observe that every $a_j$ and $b_j$ is a pole of $\tgg$, that $\tgg(i\lambda)=0$ and that $\tgg(\infty)=\tgf(\infty)=-1$. Therefore, each of these points belongs to $\tgg^{-1}(\bR P^1)$.

Write $t=: x+i y$ where $x,y\in \bR$. Then, the locus $\gg^{-1}(\bR P^1)$ is a real algebraic curve in the $(x,y)$-plane that depends algebraically on the parameter $\lambda$. Indeed, the latter locus is the zero set of $\Im\big(\tgf(t)\cdot \overline{\tgf(t)}\big) \in \bR[x,y]$. 
For $\lambda=0$, that is $g=f$, the real algebraic curve $\gg^{-1}(\bR P^1)$ in the $(x,y)$-plane consists of the union of the line $y=0$ with $d$ mutually disjoint smooth ovals, by proposition \ref{prop:critloc}. Each oval intersects $y=0$ in exactly $2$ points and is symmetric with respect to $y=0$. By Remark \ref{rem:critical}, we can localise the latter pair of points for each oval. In particular, the locus $\Im(\tgf)=0$ is as pictured on the top of Figure \ref{fig:defgamma}. Moreover, we have that the imaginary part of $\tgf$ is negative on the complement component of $\Im(\tgf)=0$ that contains $i$ since $\tgf'(0)<0$. Therefore, the sign of $\Im(\tgf)$ is as pictured on top of Figure \ref{fig:defgamma}.

If we denote $F:=\Im(\tgf(x+iy))$ and $G:=\Im\big(\frac{i}{x+iy}\tgf(x+iy)\big)$, we have that $\tgg(x+iy)=(F-\lambda G)(x,y)$. On the $x$-axis, the function $\tgf$ is real valued so that $G(x,0)=x^{-1}\tgf(x)$. Since $\tgf$ is strictly negative at its real critical points, that these critical points are located on $\{x<0\}$, by Proposition \ref{prop:eps}, then $G$ is strictly positive at the critical points of $\tgf$. Therefore, we can determine which of the two possible deformations of Figure \ref{fig:def} applies at each critical point of $\tgf$ to obtain the locus $\Im(\tgg)=0$. The latter locus is picture on the bottom of Figure \ref{fig:defgamma} for $\lambda>0$. The case $\lambda<0$ is obtianed by flipping the latter picture upside down. The result follows.
\end{proof}

\begin{figure}[h]
    \centering
    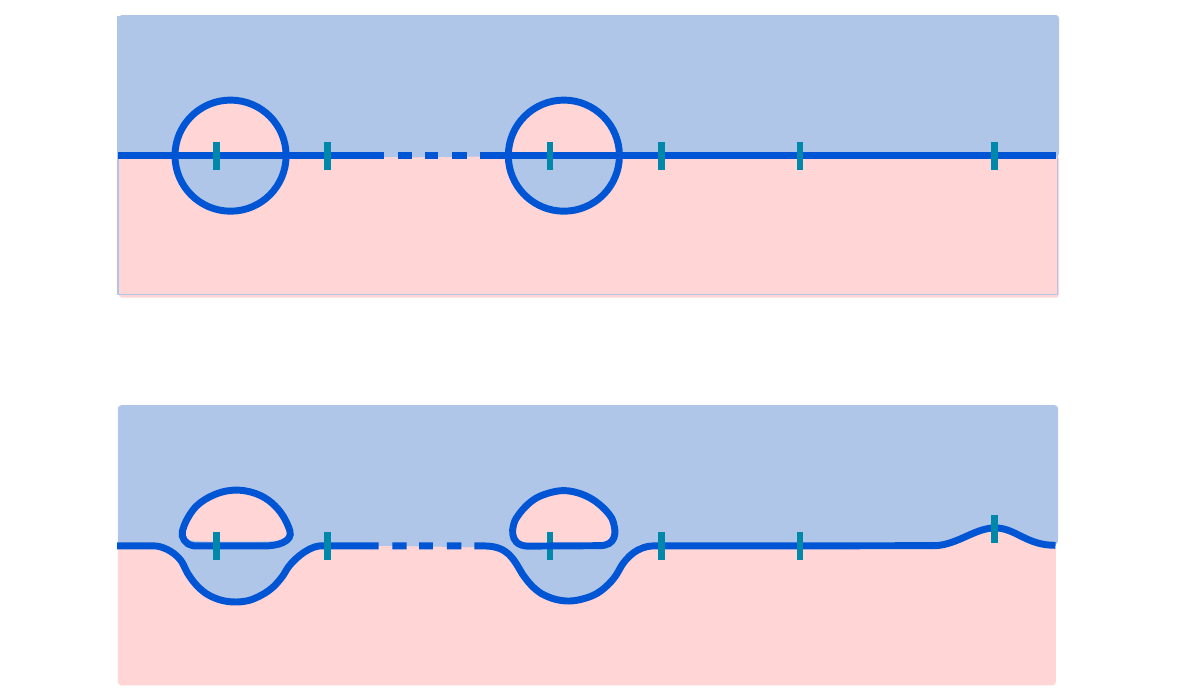
    \caption{The locus $\protect\Im(\protect\tgf)=0$ (on top) in the $(x,y)$-plane and its deformation $\protect\Im(\protect\tgg)=0$ (on the bottom) for small $\lambda>0$.}
    \label{fig:defgamma}
\end{figure}

\section{Proof of Theorem \ref{thm:applicationpatchwork}}\label{sec:proofapplicationpatchwork}

\begin{Lemma}\label{lemma:Harnackdeforme}
For any choice of real parameters $a_1, \cdots ,a_{d+1}, b_1, \cdots, b_d$ as in \eqref{eq:ajbj} and $\lambda \in \bR$ with small enough absolute value, the polynomial 
\begin{equation}\label{eq:h}
    h(z,w):=
    w\cdot \prod_{j=1}^d (z-(-b_j-\lambda i)) + \prod_{j=1}^{d+1} (z-(a_j-\lambda i))
\end{equation}
is such that $\Cr(h)$ is connected.
\end{Lemma}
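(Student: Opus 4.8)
The plan is to first treat the ``real'' case $\lambda=0$, where I expect $\Cr(h)$ to coincide with the real locus of $C_h$ (a single circle), and then to propagate this to small $\lambda$ using that the complement of the discriminant is open. Note that $h$ is precisely the polynomial \eqref{eq:g} with each $b_j$ replaced by $-b_j$, so the whole set-up of Section \ref{sec:auxilliarycurves} carries over verbatim after this substitution; the only structural change is that for $\lambda=0$ the root configuration becomes $a_1<\cdots<a_{d+1}<0<-b_d<\cdots<-b_1$ (the roots $-b_j$ of the $w$-coefficient are now positive and separated from the roots $a_j$ of the constant term) rather than interlaced. Set $h_0:=h|_{\lambda=0}=w\prod_{j=1}^d(z+b_j)+\prod_{j=1}^{d+1}(z-a_j)$ and, as in Section \ref{sec:auxilliarycurves}, write $p(z)=\prod_{j=1}^{d+1}(z-a_j)$, $q(z)=\prod_{j=1}^{d}(z+b_j)$, $\varrho_0:z\mapsto\big(z,-p(z)/q(z)\big)$; the composition $\widetilde\gamma_0$ of the logarithmic Gauss map with $\varrho_0$ reads, in the chart $[u:1]\mapsto u$ of $\bC P^1$,
\[
\widetilde\gamma_0(z)=z\Big(\frac{q'(z)}{q(z)}-\frac{p'(z)}{p(z)}\Big)=-1-\sum_{j=1}^{d}\frac{b_j}{z+b_j}-\sum_{j=1}^{d+1}\frac{a_j}{z-a_j}.
\]

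Since all the $a_j$ and $b_j$ are negative by \eqref{eq:ajbj}, a one-line computation of imaginary parts shows that $-\widetilde\gamma_0$ is a Herglotz (Nevanlinna--Pick) function: for $z=x+iy$ with $y>0$, each summand $\tfrac{b_j}{z+b_j}$ and $\tfrac{a_j}{z-a_j}$ has strictly positive imaginary part, so $-\widetilde\gamma_0$ maps the open upper half-plane into itself. Hence $\widetilde\gamma_0(\bC\setminus\bR)\subset\bC\setminus\bR$, and therefore $\widetilde\gamma_0^{-1}(\bR P^1)=\bR P^1$. Moreover a Herglotz function has positive derivative off its (simple) poles, and near $z=\infty$ one has $-\widetilde\gamma_0=1+\big(\sum_jb_j+\sum_ja_j\big)z^{-1}+O(z^{-2})$ with non-vanishing linear term, so $\widetilde\gamma_0$ has no critical point on $\bR P^1$. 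The Newton polygon of $h_0$ is the lattice trapezoid $\conv\big((0,0),(d+1,0),(d,1),(0,1)\big)$, and a standard check shows that $C_{h_0}\subset\Sigma_1$ is smooth (it meets each toric divisor of $\Sigma_1$ transversally, and has no singular point in $\ttor$ since $p$ and $q$ are coprime); hence $\varrho_0:\bC P^1\to C_{h_0}$ is an isomorphism and $\Cr(h_0)=\varrho_0\big(\widetilde\gamma_0^{-1}(\bR P^1)\big)=\varrho_0(\bR P^1)$ is a single smoothly embedded circle. In particular $\Cr(h_0)$ is smooth and connected, and $h_0$ lies off the discriminant $\mathcal D\subset\vert\mathcal L_\Delta\vert$ of \cite{L19}.

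To conclude, I would perturb in $\lambda$. By \cite[Theorem 1]{L19} the locus $\mathcal D$ is semi-algebraic of real codimension $1$ and the topological pair $(C,\Cr(C))$ is locally constant on $\vert\mathcal L_\Delta\vert\setminus\mathcal D$; since $h_0\notin\mathcal D$, there is a ball $V\ni h_0$ in $\vert\mathcal L_\Delta\vert$ disjoint from $\mathcal D$. As $h=h_\lambda$ depends continuously on $\lambda$ and $h_0\in V$, one has $h_\lambda\in V$ for $\vert\lambda\vert$ small enough, so $\Cr(h_\lambda)$ is smooth and the pair $(C_{h_\lambda},\Cr(h_\lambda))$ is homeomorphic to $(C_{h_0},\Cr(h_0))$; hence $\Cr(h_\lambda)$ is connected.

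The only delicate step is the base case $\lambda=0$: recognising $-\widetilde\gamma_0$ as a Herglotz function is exactly what pins $\widetilde\gamma_0^{-1}(\bR P^1)$ down to \emph{precisely} $\bR P^1$ — with none of the extra ovals that the interlaced configuration produces in Proposition \ref{prop:critloc} — and one must still check that $C_{h_0}$ is smooth so that $\Cr(h_0)$ is the whole circle rather than a proper quotient of it. Everything downstream of that is formal.
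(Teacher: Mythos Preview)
Your proof is correct and takes a genuinely different route from the paper's. At $\lambda=0$ the paper recognises $Z(h_0)$ as a simple Harnack curve in the sense of \cite{Mikh}, by checking that the toric boundary points $a_1,\dots,a_{d+1},0,-b_1,\dots,-b_d,\infty$ are in maximal cyclical position, and then invokes the fact that for simple Harnack curves one has $\Cr(C)=\bR C$; since $C_{h_0}$ is rational, $\bR C_{h_0}$ is a single circle and the conclusion follows. You instead write out $\widetilde\gamma_0$ explicitly and observe that $-\widetilde\gamma_0$ is a Herglotz function (all residues $-b_j,-a_j$ are positive), which forces $\widetilde\gamma_0^{-1}(\bR P^1)=\bR P^1$ directly and shows there are no critical points on it. Your argument is more self-contained---it does not import the simple Harnack theory---at the price of a short explicit computation; the paper's argument is terser but leans on a nontrivial external characterisation. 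Both then propagate to small $\lambda$ by the same openness principle for the smooth locus of $\Cr$.
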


\begin{proof}
For $\lambda=0$, the curve $Z(h)$ is a simple Harnack curve. To see this, observe that the coordinate $z$ gives a 
parametrisation of $Z(h)$ and that the points of intersection of the closure of $Z(h)$ in $\Sigma_1$ with the toric divisors 
are parametrised by $a_1, \cdots, a_{d+1},
0,-b_1, 
\cdots, b_d, 
-\infty$. In particular, these points are in maximal cyclical 
position according to \cite[Definition 2]{Mikh} and $Z(h)$ is therefore a simple Harnack curve. As a consequence, the 
Log-critical locus $\Cr(h)$ is smooth and connected since $Z(h)$ is rational.
Eventually, the set $\Cr(h)$ remains smooth and connected for small $\vert \lambda \vert$.
\end{proof}

\begin{proof}[Proof of Theorem \ref{thm:applicationpatchwork}]
In order to prove the statement, we will apply Theorem \ref{thm:patchwork} to a specific Viro polynomial $f_t$ constructed from a degree $d$ polynomial $f$ and a $\bZ$-convex function $\nu:\Delta_d\rightarrow \bR$.
 We choose $\nu$ in such a way that the induced subdivision of $\Delta_d$ is the one pictured in Figure \ref{fig:subdiv}. The latter subdivision consists of $d-1$ sub-polygons $T_1, \cdots, T_{d-1}$ defined as $T_j:=\conv \big\lbrace (1,j-1), (1,j), (d-j+1,j-1), (d-j,j) \big\rbrace$ and $d$ sub-polygons $S_1, \cdots, S_{d}$ defined as $S_j:=\conv \big\lbrace (0,j-1), (0,j), (1,j-1), (1,j) \big\rbrace$ for $1\leqslant j \leqslant d-1$ and $S_d:=\conv \big\lbrace (0,d-1), (0,d), (1,d-1) \big\rbrace$. This is clear that such a function $\nu$ exists.

We will now construct the polynomial $f$, up to an irrelevant multiplicative constant. To do so, it suffices to describe for each $P\in \{T_1,\cdots,T_{d-1},S_1, \cdots, S_d\}$ the algebraic curve in $X_P$ defined by the truncation $f^P$. 

We fix an given integer $1\leqslant b \leqslant \left(\begin{smallmatrix} d-1\\2\end{smallmatrix}\right)+1$ as in the statement. Let $k$ denote the integer such that  $\left(\begin{smallmatrix} k-1\\2\end{smallmatrix}\right)+1 < b \leq  \left(\begin{smallmatrix} k\\2\end{smallmatrix}\right)+1$ , $\ell=d-1-k$ and  $r=\ell+\big(\left(\begin{smallmatrix} k-1\\2\end{smallmatrix}\right)+1-b\big)$. 

We now start the construction on $f$. If $\ell\geqslant 1$, we fix $C_{T_1}$ to be the curve defined by any equation $g_1$ of degree $d-1$ as in \eqref{eq:g}. If $\ell\geqslant 2$, we fix $C_{T_2}$ to be the curve defined by an equation $g_2$ of degree $d-2$ where $g_2$ is as in \eqref{eq:g}.  To ensure the compatibility of $C_{T_1}$ with $C_{T_2}$ along the edge $T_1\cap T_2$, we need to take the parameters $a_j$ of $g_2$ to be the parameters $b_j$ of $g_1$. We keep on with the same procedure up to $T_\ell$ included. 

It should be clear by now that we can inductively define the curves $C_{T_i}$, $i> \ell$, using a polynomial as in either \eqref{eq:g} or \eqref{eq:h}, adjusting the parameters $a_j$ of this curve to the parameters $b_j$ of the curve beneath it, using a change of variable $z\mapsto -z$ if necessary. Then, we define any curve $C_{T_i}$, $i\neq r$, using a polynomial as in \eqref{eq:h} and $C_{T_r}$, if $r>\ell$, using a polynomial as in \eqref{eq:g}.

For the curves $C_{S_i}$, $1\leqslant i \leqslant d-1$, we use toric translations $(z,w)\mapsto(\alpha z, \beta w)$ of the simple Harnack curve $H:=\{1+z+zw-w=0\}$. We first choose $C_{S_1}$ to be a toric translation of $H$ compatible with $C_{T_1}$ along $S_1\cap T_1$ (there is a $\bC^*$-family of such). Next, we choose $C_{S_2}$ to be the unique toric translation of $H$
compatible with both $C_{S_1}$ and $C_{T_2}$. We keep on with the same procedure up to $C_{S_{d-1}}$. Eventually, we pick $C_{S_d}$ to be any line compatible with $C_{S_{d-1}}$. Since the curve $C_P$ is smooth with smooth Log-critical locus $\Cr(C_P)$ for any $P\in \{T_1,\cdots,T_{d-1},S_1, \cdots, S_d\}$, the corresponding Viro polynomial $f_t$ is Log non-degenerate.

We now define $C:=\overline{Z(f_t)}\subset \bC P^2$ for $t\in \mathcal{U}$ where $\mathcal{U}$ is the open set provided by Theorem \ref{thm:patchwork}. We claim that $b_0\big(\Cr(C)\big)=b$.
First, consider the patchwork obtained by throwing away the $S_j$-s. For any $1\leqslant j \leqslant \ell$, the Log-critical locus $\Cr(C_{T_j})$ consists of one component intersecting the divisor $\{z=0\}$ and $d-j$ components intersecting $\{w=\infty\}$, according to Lemma \ref{prop:deformation}. After patchworking, each of the $d-j$ components connects to the component of $\Cr(C_{T_{j+1}})$ intersecting $\{z=0\}$, $j=\ell$ included. We obtain $\ell$ components this way. The patchwork of the loci $\Cr(C_{T_j})$, $\ell<j\leqslant r$ provides us with a single component intersecting $\{z=0\}$ and as many compact components in $\ttor$ as there are lattice points in the interior of $T_{\ell+1} \cup \cdots \cup T_r$. Similarly the patchwork of the of the loci $\Cr(C_{T_j})$, $r<j\leqslant d-1$ provides us with a single component intersecting $\{z=0\}$ and as many compact components in $\ttor$ as there are lattice points in the interior of $T_{r+1} \cup \cdots \cup T_{d-1}$. In total, we obtain as many compact components as the number of lattice points in the interior of $T_{\ell+1} \cup \cdots \cup T_{d-1}$ minus the number of lattice points in the interior of the segment $T_r\cap T_{r+1}$, that is  
\[
\textstyle \left(\begin{smallmatrix} d-\ell-2\\2\end{smallmatrix}\right)+1-(d-1-r) = \left(\begin{smallmatrix} k-1\\2\end{smallmatrix}\right)+1-(d-1-r) = b+1-d+\ell.
\]
To conclude, we reintroduce the polygons $S_1, \cdots, S_d$ in the patchwork. All the previous components intersecting $\{z=0\}$ are now connected to each other and form a single connected component of $\Cr(C)$. There are $d-2-\ell$ new compact components in $\ttor$. In total, we obtain $1+(b+1-d+\ell)+(d-2-\ell)=b$ components.
\end{proof}

\begin{figure}[h]
    \centering
    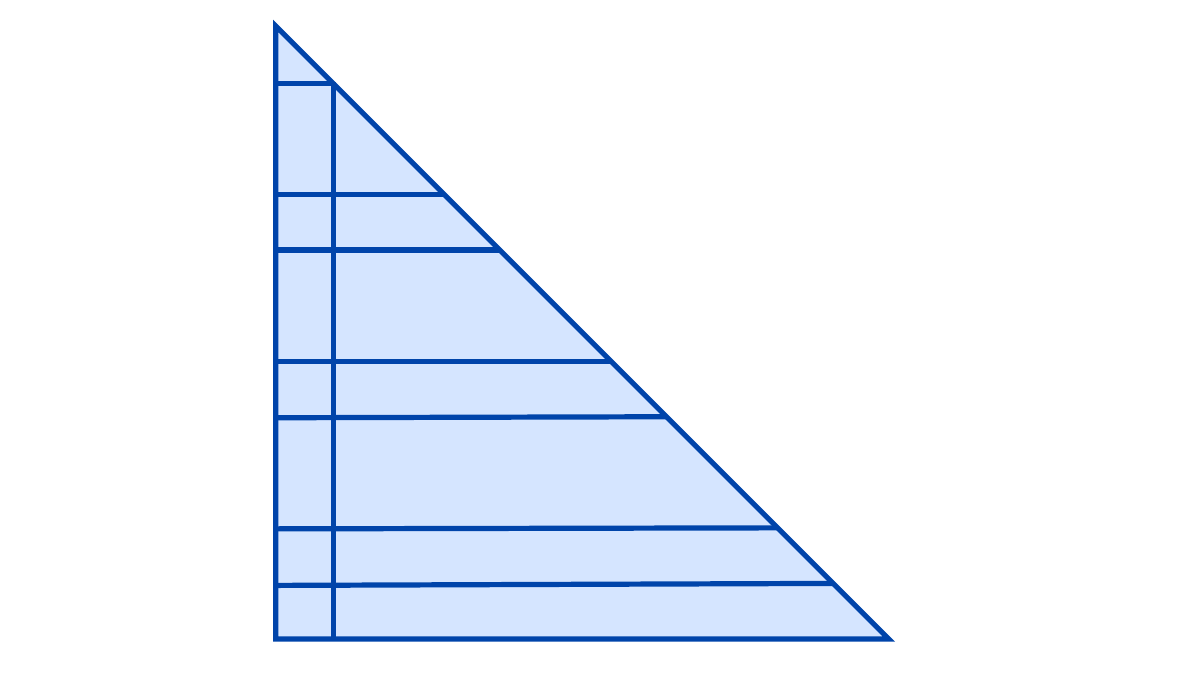
    \caption{A convex subdivision of $\Delta_d$.}
    \label{fig:subdiv}
\end{figure}

\section{Appendix}

In this last section, we illustrate the necessity of the assumption of Theorem \ref{thm:tropicallimit}. Namely, we provide an example of Viro polynomial that is Log degenerate and show that the tropical limit of some Log-inflection points is not as predicted by the latter theorem. Below, we use the notations of Section \ref{sec:proof}.

Consider the Viro polynomial
\[f_t(z,w)=c t +z \big((w-1)(w-1+b)+dz(w-1)+az^2 \big)\]
whose coefficients depends on the complex parameters $a$, $b$, $c$ and $d$. Consider for a moment the polynomial $h(z,w)=(w-1)(w-1+b)+dz(w-1)+az^2$. Since $(w-1)$ is a factor of the truncations of $h$ to $\{j_1=0\}$ and $\{j_1=1\}$ respectively, the point $(0,1)\in V(h)\cap\{z=0\}$ is a singular point of the Log-critical locus $\Cr(h)$, as hinted in the proof of Lemma \ref{lem:newton}(c). In particular, the latter point is a Log-inflection point. Therefore, the Viro polynomial $f_t$ is Log degenerate. However, it is non-degenerate for a generic choice of the coefficients $a$, $b$, $c$ and $d$.

The Riemann-Hurwitz formula implies that there are exactly 3 Log-inflection points in $C_t \cap \mathcal{V}$ for small values of $t$. Repeating similar computations as in Section \ref{sec:proof}, we can provide a parametrisation for the latter points.

The polynomial $\tft(z,\tw)$ is given by 
\[\tft(z,w)=c t +z \big(\tw(\tw+b)+dz\tw+az^2 \big)\]
with Newton polygon $\wD=\conv\big((0,0,1),(1,1,0),(1,2,0),(3,0,0)\big)$. Plainly, the numerator $\wN$ does not depend on $t$ since $\gamma_t$ itself does not depend on $t$. Its Newton polyhedron is therefore contained in the $(j_1,j_2)$-plane and is actually given by $\conv\big((2,0),(5,0),(1,4),(0,4),(0,1)\big)$. The truncation of $\wN$ to the edge $\conv\big((2,0),(0,1)\big)$ is $b^3w-3ab^2z^2$. It follows that $(1,2,3)$ is a tropism of the ideal $\big(\tft,\wN\big)$ since it is orthogonal to the edge $\conv\big((1,1,0),(0,0,1)\big)$ of $\wD$ and the edge $\conv\big((2,0,0),(0,1,0)\big)$ of $\New(\wN)$. The corresponding solutions of the system $\{\tft=\wN=0\}$, which are Log-inflection points of the curve $C_t$, are parametrised by 
\begin{equation}\label{eq:lastparam}
   (\alpha s+o(s), \beta s^2+o(s^2), s^3+o(s^3)) 
\end{equation}
in the $(z,\tw,t)$-coordinates. The composition of this parametrisation with each of $\tft$ and $\wN$ is identically zero. This leads to two equations on the coefficients $\alpha$ and $\beta$ that are eventually equivalent to  
\[\alpha=-\frac{c}{a(3b+1)} \quad \text{ and } \quad \beta= \frac{3au^2}{b}.\]
Using the substitution $t=s^3$ in the solutions \eqref{eq:lastparam} and switching back to the coordinates $(z,w,t)$ gives the following parametrisation for the Log-inflection points
\[(\alpha t^{1/3}+o(t^{1/3}), 1+\beta t^{2/3}+o(t^{2/3}), t+o(t)).\]
Thus, the tropical limit of the 3 corresponding Log-inflection points is the point $(-1/3,0)$, lying on the edge $\conv\big((-1,0),(0,0)\big)$ of the tropical limit. 

Observe that further computations lead to the formula 
\[\gamma_t(z(t),w(t))=\frac{6a}{b}\Big(\frac{-c}{a(3b+1)}t\Big)^{2/3}+o(t^{2/3}).\]
In turn, this formula allows to describe the Log-critical locus of $C_t$ in terms of the Log-critical loci of $c t +z (w-1)(w-1+b)$ and $z \big((w-1)(w-1+b)+dz(w-1)+az^2 \big)$ for $t$ small and generic.

\bibliographystyle{alpha}
\bibliography{Draft}

\end{document}